 \newtheorem{thm}{Theorem}[section]
\newtheorem{prop}[thm]{Proposition}
\newtheorem{lemma}[thm]{Lemma}
\newtheorem{Th}{Theorem}
\numberwithin{equation}{section}
\theoremstyle{definition}
\newtheorem{rem}[thm]{Remark}
\newtheorem{defn}[thm]{Definition}
\newcommand{\ls}[2]{~{}^{#1}\!{#2}}
\newcommand{\wh}[1]{\widehat{#1}}
\def\F{\mathbb{F}}
\def\bF{\mathbb{F}}
\def\G{\mathbb G}
\def\H{\mathbb H}
\def\K{\mathbb K}
\def\Z{\mathbb Z}
\def\CA{{\mathcal A}}
\def\Det{\operatorname{Det}\nolimits}
\def\Hom{\operatorname{Hom}\nolimits}
\def\SL{\operatorname{SL}\nolimits}
\def\GL{\operatorname{GL}\nolimits}
\def\PSL{\operatorname{PSL}\nolimits}
\def\PGL{\operatorname{PGL}\nolimits}
\def\Sp{\operatorname{Sp}\nolimits}
\def\PSp{\operatorname{PSp}\nolimits}
\def\SU{\operatorname{SU}\nolimits}
\def\PGU{\operatorname{PGU}\nolimits}
\def\GU{\operatorname{GU}\nolimits}
\def\PSU{\operatorname{PSU}\nolimits}
\def\CSp{\operatorname{CSp}\nolimits}
\def\SO{\operatorname{SO}\nolimits}
\def\Spin{\operatorname{Spin}\nolimits}
\def\D{\mathbb{D}}
\newcommand{\tuborg}{\left\{\begin{array}{ll}}
\newcommand{\sluttuborg}{\end{array}\right.}
\def\rk{\operatorname{rk}\nolimits}
\def\S{\mathbb S}
\newcommand{\lcom}{\hat{{}_\ell}}
\newcommand{\xrightcong}{\xrightarrow{\raisebox{-0.4ex}[0ex][0ex]{$\,
      \Small \cong\, $}}}
\newcommand{\TF}{TF}
\newcounter{countitems}
\newcounter{nextitemizecount}
\newcommand{\setupcountitems}{%
  \stepcounter{nextitemizecount}%
  \setcounter{countitems}{0}%
  \preto\item{\stepcounter{countitems}}%
}
\newcommand{\computecountitems}{%
  \edef\@currentlabel{\number\c@countitems}%
  \label{countitems@\number\numexpr\value{nextitemizecount}-1\relax}%
}
\newcommand{\nextitemizecount}{%
  \getrefnumber{countitems@\number\c@nextitemizecount}%
}
\newcommand{\previtemizecount}{%
  \getrefnumber{countitems@\number\numexpr\value{nextitemizecount}-1\relax}%
}
\newenvironment{AutoMultiColItemize}{%
\ifnumcomp{\nextitemizecount}{>}{3}{\begin{multicols}{2}}{}%
\setupcountitems\begin{itemize}}%
{\end{itemize}%
\unskip\computecountitems\ifnumcomp{\previtemizecount}{>}{3}{\end{multicols}}{}}
\title[Torsion Free Endotrivial modules]{Torsion Free Endotrivial modules for finite groups of Lie type}
\author{\sc Jon F. Carlson}
\address
{Department of Mathematics\\ University of Georgia \\
Athens\\ GA~30602, USA}
\email{jfc@math.uga.edu}
\author{\sc Jesper Grodal}
\address
{Department of Mathematical Sciences\\ University of
  Copenhagen\\ Copenhagen \\DK-2100, Denmark} 
\email{jg@math.ku.dk}
\author{\sc Nadia Mazza}
\address
{Department of Mathematics and Statistics\\ Lancaster University\\ Lancaster \\LA1 4YF, UK} 
\email{n.mazza@lancaster.ac.uk}
\author{\sc Daniel K. Nakano}
\address
{Department of Mathematics\\ University of Georgia \\
Athens\\ GA~30602, USA}
\email{nakano@math.uga.edu}
\keywords{endotrivial modules, finite groups of Lie type, elementary
  abelian subgroups}
\subjclass[2010]{Primary: 20C33, 20C20, Secondary: 20E15
}
\thanks{The first author was partially supported by Simons Foundation 
  under Grant No.\ 054813-01. The second author was supported by the
  Danish National Research Foundation through the Centre for Symmetry
  and Deformation (DNRF92) and the Copenhagen Center for Geometry and
  Topology (DNRF151). The fourth author was partially supported by 
NSF grant number DMS-1701768.}
\begin{document}
\begin{abstract} In this paper we determine the torsion free
rank of the group of endotrivial modules for any
finite group of Lie type, in both defining and non-defining
characteristic. Equivalently, we classify the maximal
rank $2$ elementary abelian $\ell$-subgroups in any finite group of
Lie type, for any prime $\ell$. This classification  
may be of independent interest.
\end{abstract}

\maketitle


\section{Introduction} \label{sec:intro} 
Endotrivial modules play a significant role in 
the modular representation theory of finite groups; in particular,
they are the invertible elements in the Green ring of the stable
module category of finitely generated modules for the group algebra.  
Tensoring with an endotrivial module is a self equivalence of the
stable module category and these operations
generate the Picard group of self equivalences of Morita type in this
category.
The endopermutation modules, defined for finite groups of
prime power order, are the sources of the irreducible modules for
large classes of finite groups, and these endopermutation modules are
built from the endotrivial modules.  

Let $G$ be a finite group and let $k$ be a field of prime
characteristic $\ell$ that divides the order of $G$.  
A finitely generated $kG$-module $M$ is {\em endotrivial}
if its $k$-endomorphism ring $\Hom_k(M,M)$ is the direct sum
of a trivial module and a projective module. The isomorphism classes 
in the stable category of such modules 
form an abelian group $T(G)$
under the tensor product $\otimes_k$, where $M\otimes_kN$ is
equipped with the diagonal $G$-action. The group has identity $[k]$ and 
the inverse to a class $[M]$ is the class $[M^*]$, where $M^*$
is the $k$-dual of $M$.
As $T(G)$ is finitely generated it is isomorphic to the
direct sum of its torsion subgroup $TT(G)$, and
a finitely generated torsion free group $\TF(G) = T(G)/TT(G)$. We define the
{\em torsion free rank} of $T(G)$ to be the rank of $\TF(G)$ 
as a $\mathbb Z$-module. 
In \cite{Gro}, the second author used homotopy theory to describe
$TT(G)$, tying the structure of $TT(G)$ to that of $G$ itself, and in
doing so, he also proved a conjecture by the first author and
Th\'evenaz \cite{CT}. 
In a forthcoming article \cite{CGMN2}, we will provide a 
description of the torsion subgroup $TT(G)$ for $G$
a finite group of Lie type for all primes,
using homotopy theoretic methods. For more information on the history and 
applications of endotrivial modules, see the survey papers 
\cite{CRL, Th}, and the book by the third author \cite{Mazza}. 

We recall that, for any finite group $G$, there is a distinguished element in
$T(G)$, namely the class of the shift of the trivial module, defined to be the kernel of
the map from a projective cover of $k$ to $k$. It is easily verified to be
endotrivial. Moreover, by elementary
homological algebra, the class of this  element has infinite order in $\TF(G)$
if and only if $G$ contains a subgroup 
isomorphic to $\Z/\ell \times \Z/\ell$.

Our main theorem of this paper determines the rank of $\TF(G)$
for $G$ any finite group of Lie type of characteristic
$p$. We show that it is generated by the class of the shift of the trivial
module except in a few low-rank cases, that we describe
explicitly. Before stating the precise version of the main 
theorem, we need to make clear 
what we mean by a finite group of Lie type.

\begin{defn}[Finite group of Lie type]\label{def:lietype}
 By a {\em finite group of Lie type} in
characteristic $p$ we mean a group $G = \G^F$ for $\G$ a connected
reductive algebraic group over an algebraically closed field of
characteristic $p$, and $F$ a Steinberg endomorphism, i.e., an
endomorphism of $\G$ such that $F^s$ is a standard Frobenius map
$F_q$, for $q = p^r$ and some $s,r \geq 1$.
\end{defn}

This definition is a bit more general than 
that of \cite[Definition 21.6]{MT} in that we
only assume $\G$ to be reductive instead of semisimple. For example, 
this includes the classical group $\GL_n(q)$. 
We now present our main theorem:

\begin{Th} \label{thm:tf-reductive-main}
Let $G$ be a finite group of Lie type in characteristic $p$ as in Definition~\ref{def:lietype}.
The group $\TF(G)$ of torsion free endotrivial modules
over a field of characteristic $\ell$, with $\ell \mid |G|$, 
is zero or infinite cyclic generated by the class of the shift of the trivial
module, except when $G$ is on the following list:
\begin{enumerate}
\item \label{case-cross} $\ell \neq p$ and
$G \cong H \times K$, where $\ell \nmid |K|$, and $H$ is 
either

\begin{enumerate}
\item \label{PGL}$\PGL_{\ell}(q)$ with $\ell  \mid q-1$,
  \item \label{PGU} $\PGU_{\ell}(q)$ with
    $\ell \mid q+1$, or
    \item\label{3D4}$\ls3D_{4}(q)$ with $\ell = 3$.
\end{enumerate}
\item \label{case-char} $\ell =p$ 
and $G/Z(G)$ is either $\PSU_3(p)$ for $p
\geq 3$ and  $3 \mid p+1$, $\PSL_3(p)$ for $p \geq 2$,
$\PGL_3(p)$ for $p \geq 2$, $\operatorname{PSpin}_5(p)$ 
for $p \geq 5$,   $\SO_5(p)$
for $p \geq 5$, or  $G_2(p)$ for $p \geq  7$.
\end{enumerate}
In case \eqref{case-cross}, $\TF(G) \xrightcong \TF(H)$ has rank $3$ if
$H \cong \PGL_{\ell}(q)$ or  $\PGU_{\ell}(q)$ and $\ell >
2$, and rank $2$ if $\ell =2$ or $H \cong \ls3D_{4}(q)$;
see Theorems~\ref{thm:lgeq3} and \ref{thm:l=2}. In case
\eqref{case-char} the ranks are listed in the tables in 
Section~\ref{sec:defining}; see Theorem \ref{thm:l=p}.  
\end{Th}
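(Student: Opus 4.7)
My plan is to translate the computation of the torsion-free rank of $T(G)$ into a combinatorial question about the poset $\mathcal{A}_{\geq 2}(G)$ of elementary abelian $\ell$-subgroups of $G$ of rank at least $2$. By~\cite{Gro}, which resolves the Carlson--Th\'evenaz conjecture, the rank of $\TF(G)$ equals the number of connected components of the orbit space $|\mathcal{A}_{\geq 2}(G)|/G$ as soon as the $\ell$-rank of $G$ is at least $2$, and the class of the shift of the trivial module picks out a distinguished such component. Theorem~A thus amounts to showing that, outside of the exceptional families listed, this orbit space is connected; the precise ranks in the exceptional cases then follow by counting components directly.

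For the non-defining characteristic case $\ell \neq p$, I would use the Brou\'e--Malle--Michel theory of generic Sylow subgroups. Letting $d = \ord_\ell(q)$ (with the usual modification when $\ell = 2$), every maximal elementary abelian $\ell$-subgroup of $G$ is conjugate into the normalizer $N_G(T_d)$ of a maximal $\Phi_d$-torus, so the inclusion structure of $\mathcal{A}_{\geq 2}(G)$ is governed by the $\ell$-part of $T_d$ together with the action of the relative Weyl group. A type-by-type inspection of the root datum of $\G$ shows that in essentially all situations one finds enough rank-$3$ elementary abelians, coming from pairs of commuting subtori or split subsystems, to connect any two rank-$2$ subgroups. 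The residual cases are exactly those in which the Sylow normalizer is extraspecial-by-cyclic at the small prime $\ell$ governing the relevant Weyl element, producing the families $\PGL_\ell(q)$, $\PGU_\ell(q)$, and $\ls{3}{D}_4(q)$ at $\ell = 3$ appearing in item~\eqref{case-cross}. Direct computation inside the appropriate Weyl normalizer then yields $3$ classes of maximal rank-$2$ elementary abelians (resp.\ $2$ classes when $\ell = 2$), none linked to each other by a rank-$3$ subgroup.

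For the defining characteristic case $\ell = p$, I would identify elementary abelian $p$-subgroups of $G$ with commuting families of order-$p$ unipotent elements of $\G^F$, and decompose them according to whether they lie entirely inside a Borel unipotent radical or involve a semisimple Levi piece. When $\G$ has semisimple rank at least $3$, or rank $2$ with $p$ large, standard root subsystem constructions supply enough rank-$3$ elementary abelians to connect all rank-$2$ ones, so the orbit space is connected and $\TF(G)$ has rank $1$. The obstructions occur only for root systems of rank $\leq 2$ together with $G_2$ at the small primes listed in item~\eqref{case-char}, and the precise ranks there are read off from the structure of a Borel subgroup and its unipotent radical, as tabulated in Section~\ref{sec:defining}. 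The passage to the direct product form $H \times K$ with $\ell \nmid |K|$, and from $G$ to a quasi-simple quotient, rests on the observation that $\mathcal{A}_{\geq 2}(G)$ is insensitive to an $\ell'$-direct factor and to central $\ell'$-quotients.

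The main obstacle, as I see it, is the fine non-defining analysis at the small prime $\ell$: the generic theory of $\Phi_d$-tori captures the $\ell$-local structure only up to the relative Weyl group, and to establish disconnectedness in item~\eqref{case-cross} one must positively rule out the existence of any rank-$3$ elementary abelian subgroup linking two of the rank-$2$ classes in $\PGL_\ell(q)$, $\PGU_\ell(q)$, or $\ls{3}{D}_4(q)$. This requires an explicit inspection of the extraspecial Sylow $\ell$-subgroup and of possible exotic commuting pairs arising from Weyl elements of order $\ell$ acting nontrivially on the torus, and is where I expect the bulk of the technical work in the paper to lie.
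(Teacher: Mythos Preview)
Your starting point---reducing to connectivity of $\CA^{\geq 2}_\ell(G)/G$---matches the paper exactly; this is Theorem~\ref{thm:poset2}. (Minor point: the translation is due to Alperin \cite{A2} and \cite{CMN}, not \cite{Gro}; the latter concerns the torsion part $TT(G)$.)

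Beyond that, your proposed execution diverges substantially from the paper, and in ways that underestimate where the work lies. For $\ell$ odd and $\ell \neq p$, the paper does not go through Brou\'e--Malle--Michel $\Phi_d$-tori at all. Instead it invokes \cite[Theorem~4.10.3]{GLS} (restated as Theorem~\ref{T:uniquemax1}) to get a single conjugacy class of maximal elementary abelians whenever $\ell$ is odd, good for $\G$, and $\ell \nmid |Z(\G_{sc})^F|$; the remaining cases are then eliminated using the Glauberman--Mazza bound (Theorem~\ref{T:norank2}(a)) that no maximal rank-$2$ elementary abelian can exist once $\rk_\ell(G) \geq \ell+1$, together with ad hoc arguments for a short list of bad-prime exceptional groups (Proposition~\ref{prop:oddreduce}). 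The genuine exceptions $\PGL_\ell(q)$, $\PGU_\ell(q)$, $\ls3D_4(q)$ are handled by explicit matrix and Sylow computations in Section~\ref{sec:lg3specific}. Your $\Phi_d$-torus framework could in principle be made to work, but it would run into exactly the same obstacles (bad primes, $\ell \mid |Z^F|$) and would not shortcut them.

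Two further gaps: first, you do not separate out $\ell = 2$, but the paper treats it by an entirely different mechanism---MacWilliams' Four Generator Theorem and the Gorenstein--Harada classification of simple groups of sectional $2$-rank at most $4$ (Theorems~\ref{T:norank2}(b) and \ref{thm:largerank})---followed by a hands-on matrix analysis of the surviving low-rank types. Second, you treat the passage from reductive $\G$ to simple $\G$ as a triviality (``insensitive to an $\ell'$-direct factor''), but this occupies all of Section~\ref{sec:reductive} and is delicate: one must rule out, for instance, that a central product of two $\SL_3(q_i)$'s at $\ell=3$, or of two $\SL_2(q_i)$'s at $\ell=2$, can produce new maximal rank-$2$ elementary abelians after quotienting by a diagonal central subgroup and extending by $Z_F$. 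The arguments there again lean on Glauberman--Mazza/MacWilliams rank bounds combined with explicit centralizer computations.
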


The quotient groups $G/Z(G)$ occurring above as the classical groups
$\PSL_3(p) = \SL_3(p)/C_3$,
$\PSU_3(p) = \SU_3(p)/C_3$,
and $\operatorname{PSpin}_5(p) = \Spin_5(p)/C_2$
are in fact not themselves finite groups of
Lie type; see Remark~\ref{rem:lietype} and Section~\ref{sec:assoc}
for more about this subtlety.  Section~\ref{sec:assoc} also contains
analogous results for all groups of the form $\G^F/Z(\G^F)$, for simply
connected simple $\G$, i.e., the {\em finite simple groups} associated
to finite groups of Lie type.
Special cases of the above results can be found in
\cite{CMN,CMN3,CMN4}.
Note that the rank of  $\TF(G)$ depends on the
characteristic $\ell$ of $k$, but not  on the finer
structure of $k$.

An elementary abelian $\ell$-subgroup of $G$ is a subgroup
isomorphic to an $\mathbb F_\ell$-vector space. Its  $\ell$-rank is
its $\mathbb F_\ell$-vector space dimension. The $\ell$-rank of $G$, denoted
$\rk_\ell(G)$, is the maximum of the $\ell$-ranks of elementary abelian
$\ell$-subgroups of $G$. The groups in \eqref{PGL} and \eqref{PGU} of  Theorem~\ref{thm:tf-reductive-main} have
$\ell$-rank $\ell-1$ when $\ell$ is odd, while all other 
groups listed in  \eqref{case-cross} and \eqref{case-char} have $\ell$-rank $2$. 

By a well-known correspondence, recalled 
in Theorem~\ref{thm:poset2} below, our main
result translates into a purely local group theoretic
statement, Theorem~\ref{thm:main-grp}, which is in fact what we prove.
Let $\CA^{\geq 2}_\ell(G)$ denote the poset of noncyclic elementary
abelian $\ell$-subgroups of $G$, ordered by
subgroup inclusion. We say that an elementary abelian $\ell$-subgroup
of $G$ is maximal if it is maximal in $\CA^{\geq 2}_\ell(G)$, i.e., if it
is not properly contained in any other elementary abelian subgroup of
$G$.
The poset $\CA^{\geq 2}_\ell(G)$ has a $G$-action by conjugation, and we can also
consider the orbit space $\CA^{\geq 2}_\ell(G)/G$. For
any poset $X$, we can define its set of connected components
$\pi_0(X)$, as equivalence classes of elements generated by the order
relation, and note that, for a $G$-poset, 
$\pi_0(X)/G \xrightcong \pi_0(X/G)$.
The following theorem states the correspondence. 

\begin{thm}[{\cite[Theorem 4]{A2}  
\cite[Theorem 3.1]{CMN}}]\label{thm:poset2}
For any finite group $G$ and prime $\ell$ dividing the order of $G$, the rank of the group  
$\TF(G)$ is equal 
to the number of connected components of
the orbit space $\CA^{\geq 2}_\ell(G)/G$. 
This number is $0$ if $\rk_\ell(G) =1$; it is equal to 
the number of conjugacy classes of maximal elementary abelian
$\ell$-subgroups in $G$ if $\rk_\ell(G) =2$; and 
it is equal to $1$ more than the number of
conjugacy classes of maximal elementary abelian
$\ell$-subgroups of rank $2$, if  $\rk_\ell(G) >2$.
\end{thm}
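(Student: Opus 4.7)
The plan is to reduce the theorem to a computation on the poset $\CA^{\geq 2}_\ell(G)$ via restriction to elementary abelian $\ell$-subgroups. The key input is Dade's theorem: for an elementary abelian $\ell$-group $E$ of rank $r$, the group $T(E)$ is infinite cyclic generated by the class $[\Omega k]$ of the shift of the trivial module when $r \geq 2$, and finite when $r \leq 1$. Thus for each $E \in \CA^{\geq 2}_\ell(G)$ one has $\TF(E) \cong \mathbb{Z}\cdot [\Omega k]$, and restriction from $G$ assembles into a natural homomorphism
\[
\Res \colon \TF(G) \longrightarrow \prod_{E \in \CA^{\geq 2}_\ell(G)} \TF(E).
\]

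The first step is to show $\Res$ is injective, with image exactly the subgroup of $G$-invariant compatible systems. Injectivity on the torsion-free part is Alperin's detection theorem. Compatibility along an inclusion $E \subseteq E'$ follows from the identity $\Res_E^{E'}[\Omega k] = [\Omega k]$ in the stable category, which holds because the shift of the trivial module commutes with restriction up to a projective summand. The realization (surjectivity onto compatible systems) half of the correspondence, due to Alperin and Carlson--Th\'evenaz, produces for each connected component of $\CA^{\geq 2}_\ell(G)/G$ a torsion-free endotrivial module with the prescribed restriction pattern. A $G$-invariant compatible system is precisely an integer-valued function on $\pi_0(\CA^{\geq 2}_\ell(G))/G \cong \pi_0(\CA^{\geq 2}_\ell(G)/G)$, so this identification yields $\TF(G) \cong \mathbb{Z}^{\pi_0(\CA^{\geq 2}_\ell(G)/G)}$ and hence the rank formula.

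The rest is a combinatorial count, by cases on $\rk_\ell(G)$. If $\rk_\ell(G) = 1$ the poset is empty and the rank vanishes. If $\rk_\ell(G) = 2$, every element of $\CA^{\geq 2}_\ell(G)$ is itself a maximal rank-$2$ subgroup with no proper sub- or supergroup in the poset, hence an isolated point; the number of components equals the number of $G$-conjugacy classes of maximal rank-$2$ subgroups. If $\rk_\ell(G) > 2$, each maximal rank-$2$ subgroup (necessarily not contained in any rank $\geq 3$ elementary abelian) remains an isolated singleton component, while the remaining rank $\geq 3$ subgroups together with the rank-$2$ subgroups they contain form a single $G$-orbit component, contributing the ``$+1$''.

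The main obstacle is the detection-and-realization identification of $\TF(G)$ with $\mathbb{Z}^{\pi_0(\CA^{\geq 2}_\ell(G)/G)}$, which rests on Dade's classification and the local-to-global structure theorems for endotrivial modules; a secondary obstacle is the connectedness assertion in the $\rk_\ell(G) > 2$ case, requiring one to show that after passing to $G$-orbits, all rank $\geq 3$ subgroups of $G$ lie in a single component of $\CA^{\geq 2}_\ell(G)/G$, which follows from a Quillen-type chaining argument through shared rank-$2$ subgroups.
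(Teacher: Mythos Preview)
The paper does not supply its own proof of this statement; it is quoted as a known result, with the $\ell$-group case attributed to Alperin \cite{A2} and the general case to Carlson--Mazza--Nakano \cite{CMN}. Your outline follows the same strategy as those references: detect $\TF(G)$ on elementary abelian subgroups via Dade's theorem, identify the image with locally constant $G$-invariant functions on $\CA^{\geq 2}_\ell(G)$, and then count components. So there is no discrepancy in method to report.

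There is, however, one genuine soft spot. Your final ``Quillen-type chaining argument through shared rank-$2$ subgroups'' is not an argument, and the connectedness of the large component when $\rk_\ell(G)>2$ is exactly the nontrivial combinatorial content. The standard way to fill it is this: work inside a Sylow $\ell$-subgroup $S$ and choose a \emph{normal} elementary abelian $V\trianglelefteq S$ of rank exactly $2$ (such $V$ exists since $\rk_\ell(S)\geq 3$ forces $S$ to have a noncyclic normal elementary abelian subgroup, inside which one finds an $S$-invariant $2$-dimensional subspace). For any elementary abelian $E\leq S$ of rank $\geq 3$, set $K=C_E(V)$. As $E/K$ embeds in a Sylow $\ell$-subgroup of $\GL_2(\ell)$, which has order $\ell$, one gets $\rk_\ell(K)\geq \rk_\ell(E)-1\geq 2$. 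Then
\[
E \;\supseteq\; K \;\subseteq\; KV \;\supseteq\; V
\]
is a chain in $\CA^{\geq 2}_\ell(S)$ connecting $E$ to $V$. Consequently every rank $\geq 3$ subgroup, and every rank-$2$ subgroup contained in one, lies in the single component of $[V]$ in $\CA^{\geq 2}_\ell(G)/G$; the remaining maximal rank-$2$ subgroups are isolated, giving the ``$+1$'' count. Without this step your sketch does not establish the last clause of the theorem.

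A minor point: the realization step for arbitrary finite $G$ is in \cite{CMN} (building on Alperin's construction for $\ell$-groups), not Carlson--Th\'evenaz, whose work concerns the classification for $\ell$-groups.
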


The theorem above is Alperin's \cite{A2} original calculation of
the torsion free rank of $T(G)$ in the case that $G$ is a finite
$\ell$-group. The proof for arbitrary finite groups 
is given in \cite{CMN} and uses very different methods. 
With this dictionary in place, we can state a local 
group theoretic version of our main result:

\begin{Th}\label{thm:main-grp}
Let $G$ be a finite group of Lie type in characteristic $p$ 
(see Definition~\ref{def:lietype}) and $\ell$ an arbitrary prime.
\begin{enumerate}
\item
If $\rk_\ell(G)>2$, then $G$ does
not have a maximal elementary abelian $\ell$-subgroup of rank $2$, unless
$\ell>3$, $\ell \neq p$, and $G$ has the
form given in Theorem~\ref{thm:tf-reductive-main}\eqref{PGL} or
\eqref{PGU} (where  $\rk_\ell(G) = \ell-1$). 
\item
If $\rk_\ell(G) =2$, then all elementary abelian $\ell$-subgroups of
$G$ of rank $2$ are conjugate unless 
$G$ has the form given in
Theorem~\ref{thm:tf-reductive-main}\eqref{case-char},  in
Theorem~\ref{thm:tf-reductive-main}\eqref{3D4}, or in
 Theorem~\ref{thm:tf-reductive-main}\eqref{PGL}\eqref{PGU}, $\ell \leq
 3$.
\end{enumerate}
\end{Th}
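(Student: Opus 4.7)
The plan is to apply the correspondence in Theorem~\ref{thm:poset2} and analyze the orbit space $\CA^{\geq 2}_\ell(G)/G$ directly, reducing the statement to a classification of conjugacy classes of maximal elementary abelian $\ell$-subgroups of $G$. First I would reduce to the case where $\G$ is (quasi-)simple, since for a direct product of reductive algebraic groups an elementary abelian $\ell$-subgroup splits as a product and conjugacy is detected factor by factor; this also explains the role of the ``$\ell \nmid |K|$'' factor in case~\eqref{case-cross}. From there I would split the argument according to whether $\ell = p$ or $\ell \neq p$, as the two cases require quite different tools.

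For $\ell \neq p$, every $\ell$-element of $G = \G^F$ is semisimple, so each abelian $\ell$-subgroup $A \leq G$ satisfies $C_\G(A)^0$ reductive. The main technical step is to separate \emph{toral} elementary abelian $\ell$-subgroups (those contained in an $F$-stable maximal torus) from the \emph{non-toral} ones. For toral subgroups I would use Deligne--Lusztig theory: $F$-conjugacy classes of $F$-stable maximal tori $T$ correspond to $F$-conjugacy classes in the Weyl group, and $|T^F|$ factors as a product of cyclotomic values $\Phi_d(q)$, so one can read off when $T^F$ supports a rank-$2$ elementary abelian $\ell$-subgroup and when such subgroups fuse under $N_G(T)$. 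For non-toral subgroups I would invoke the classification due to Griess and Andersen--Grodal--M{\o}ller--Viruel, identifying where non-toral rank-$\geq 2$ elementary abelian $\ell$-subgroups arise in simple algebraic groups. The exceptional cases \eqref{PGL}, \eqref{PGU}, \eqref{3D4} appear precisely as the non-toral $(\Z/\ell)^{\ell-1}$ in $\PGL_\ell$ and $\PGU_\ell$ (images of diagonal matrices modulo scalars) and the non-toral $(\Z/3)^3$ in $\ls3D_4(q)$; for all other simple $\G$ one verifies that any rank-$2$ subgroup is toral, and that rank-$2$ toral subgroups form a single $G$-orbit whenever they exist.

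For $\ell = p$, the Borel--Tits theorem places every elementary abelian $p$-subgroup in the unipotent radical $U^F$ of some $F$-stable Borel, which is a Sylow $p$-subgroup of $G$. Since $U = \prod_{\alpha \in \Phi^+} U_\alpha$ as varieties, the $p$-rank of $G$ grows with the semisimple rank of $\G$, and a rank-$2$ maximal elementary abelian can occur only in groups of semisimple rank at most $2$. I would then carry out a case-by-case analysis on $\SL_3, \SU_3, \Sp_4 \cong \Spin_5, G_2$ and their isogenous variants, using the explicit commutator relations in the root subgroups together with the Steinberg endomorphism $F$ to count conjugacy classes of rank-$2$ elementary abelians and to track the action of the center. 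This should yield the list \eqref{case-char} together with the numerical tables in Section~\ref{sec:defining}.

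The principal obstacle is exhaustiveness: the statement permits no exceptions beyond those listed, so every family of finite groups of Lie type (including the twisted Suzuki, Ree, $\ls2E_6$ and $\ls3D_4$ groups) must be checked. In the non-defining case the sensitive points are the small primes $\ell \in \{2,3\}$ and the way quotienting by the center can \emph{create} new non-toral obstructions, explaining why the exceptions occur for $\PGL_\ell$ and $\PGU_\ell$ but not for $\SL_\ell$ or $\SU_\ell$. In the defining case the sensitivity is to the isogeny type: for a fixed simple algebraic group, $\SL_3(p), \PGL_3(p)$ and $\PSL_3(p)$ have different numbers of conjugacy classes of rank-$2$ elementary abelians, owing to the different actions of the center, and this distinction must be tracked uniformly across all types rather than handled ad hoc.
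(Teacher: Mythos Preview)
Your overall architecture---translate via Theorem~\ref{thm:poset2}, reduce to simple $\G$, then split on $\ell=p$ versus $\ell\neq p$---matches the paper's. But there are two substantive gaps, and one further point where the paper works harder than you suggest.

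First, your description of the exceptional cases is factually off. In $\PGL_\ell(q)$ with $\ell\mid q-1$, the rank-$(\ell-1)$ subgroup coming from diagonal matrices modulo scalars is \emph{toral}, not non-toral; the genuine obstructions are the rank-$2$ quotients $E_2/\langle z\rangle$ and $E_3/\langle z\rangle$ of the extraspecial group $\langle x,b,z\rangle$ built from a permutation matrix and a root-of-unity diagonal (Section~\ref{sec:lg3specific}). Likewise $\ls3D_4(q)$ has $3$-rank $2$, so there is no $(\Z/3)^3$; the two non-conjugate rank-$2$ classes come from the maximal-class Sylow $3$-subgroup $(C_{3^{a+1}}\times C_{3^a})\rtimes C_3$. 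Misidentifying which subgroups are non-toral and what rank they have would derail the case analysis you propose.

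Second, and more seriously, your non-defining plan does not single out $\ell=2$. The toral/non-toral dichotomy via Deligne--Lusztig and the classification in \cite{AGMV08} is effective for odd $\ell$---indeed the paper imports essentially this via \cite[Theorem~4.10.3]{GLS} and Theorem~\ref{T:norank2}(a)---but for $\ell=2$ the paper uses an entirely different tool: MacWilliams' Four Generator Theorem (Theorem~\ref{T:norank2}(b)) together with the Gorenstein--Harada classification of simple groups of sectional $2$-rank at most $4$ (Theorem~\ref{thm:largerank}) to reduce to a short explicit list, followed by hands-on analysis of $2\times 2$ block matrices for the survivors in Section~\ref{sec:le2generic}. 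Your proposal offers no mechanism to rule out maximal rank-$2$ elementary abelian $2$-subgroups in, say, $E_7(q)$ or $D_n(q)$; the sectional-rank argument does this uniformly.

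Finally, the reductive-to-simple reduction is more delicate than ``factor by factor''. A connected reductive $\G$ is only an \emph{almost}-direct product of simple factors and a central torus, and the central gluing could in principle create or destroy rank-$2$ classes. The paper spends all of Section~\ref{sec:reductive} showing this never yields new exceptions, using rank estimates and Theorem~\ref{T:norank2} to eliminate configurations where $\ell$ divides the center of more than one simple factor, and then comparing the surviving cases with the associated-group calculations of Sections~\ref{sec:assoc} and~\ref{sec:le2generic}. Your one-line reduction would not suffice.
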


To provide additional context to Theorem~\ref{thm:main-grp}, 
recall that $G$ can only have a maximal
elementary abelian $\ell$-subgroup of rank $2$ when 
$\rk_\ell(G) \leq \ell$ for $\ell$ odd, and
$\rk_2(G) \leq 4$ when $\ell =2$, by a theorem of 
Glauberman--Mazza \cite{GlMa} and MacWilliams \cite{McW}
(restated as Theorem~\ref{T:norank2}). Theorem~\ref{thm:main-grp} pins
down exactly the cases where this does in fact occur for finite
groups of Lie type.
The study of elementary abelian $\ell$-subgroups of $\G$ and $\G^F$
has a long history, with close relationship to 
cohomology and representation theory; see e.g.,
\cite{borel61,  BFM02, quillen71, quillen78, steinberg75}. When $\ell
\neq p$, conjugacy classes of elementary abelian 
$\ell$-subgroups of $\G$ identify with those
of the corresponding complex reductive algebraic group, or compact Lie
group (see \cite[{Section~8}]{AGMV08}).
In fact, they only depend on the $\ell$-local structure 
as encoded in the $\ell$-compact group
$(B\G)\lcom$ obtained by $\ell$-completing the classifying space
$B\G$ in the sense of homotopy theory \cite{grodal10}. 
Similarly, the elementary abelian $\ell$-subgroups of $G$ are
determined by $BG\lcom$, an $\ell$-local finite group \cite{BLO03} describable
from the action of $F$ on $B\G\lcom$; see e.g.,
\cite[Appendix C]{GL20} for a summary.
The question of existence of maximal rank $2$ elementary abelian
$\ell$-subgroups can thus be asked more generally in the context of
homotopy finite groups of Lie type,  i.e., 
homotopy fixed-points of Steinberg
endomorphisms on connected $\ell$-compact groups
\cite{BM07, GL20}. In fact we expect Theorem~\ref{thm:main-grp} to
generalize to this setting, with the same conclusion, as
simple $\ell$-compact groups not coming from a compact connected Lie
group are centerless and have a unique maximal elementary abelian
$\ell$-subgroup (see \cite[Theorems~1.2 and 1.8]{AGMV08}
and \cite[Theorem~1.1]{AG09}).  We do not pursue the details
here, but see Remark~\ref{rem:lcgsubgroups}.

One may similarly wonder if $\TF(G)$ of 
Theorem~\ref{thm:tf-reductive-main} only depends on the
$\ell$-local structure in the stronger sense that if $H \to G$ induces
an isomorphism of $\ell$-fusion systems, is the  {\em map} $\TF(G) \to \TF(H)$
an isomorphism? That question, however,
has a negative answer in
general, and we need to replace $\ell$-fusion by a stronger
$\ell$-local invariant \cite{BGH20}. 

\subsubsection*{Structure of the paper} 
Section~\ref{sec:endotriv} collects background results needed later,
including the aforementioned general Theorem~\ref{T:norank2} that gives conditions
on $\rk_\ell(G)$ ensuring
no maximal elementary abelian $\ell$-subgroups of rank $2$.

In Sections~\ref{sec:lg3generic}--\ref{sec:defining}, we 
determine $\TF(G)$ when $G = \G^F$, and $\G$ is simple. 
The cases when $3 \leq \ell \neq p$ are 
handled in Sections~\ref{sec:lg3generic} and \ref{sec:lg3specific}.  
In many cases it is known that the orbit space $\CA^{\geq 2}_\ell(G)/G$
is connected (see \cite[Section~4.10]{GLS}). 
This allows us to reduce to examining some
groups of small Lie rank, 
in Proposition~\ref{prop:oddreduce}, and these are then 
analyzed in Section~\ref{sec:lg3specific}. In Section~\ref{sec:assoc}, we extend the results of the previous
sections to also  compute $\TF(G)$, for $G$ a group closely associated to 
a group of Lie type such as $\PSL_n(q)$ or $\PSp_n(q)$, in the case
that $\ell \geq 3$.

The case where  $2 = \ell \neq p$ is handled in
Section~\ref{sec:le2generic}.
Section~\ref{sec:defining} investigates the final case when
${\ell}=p$, extending work in \cite{CMN}.
 In the case that $\ell = 2$ the associated groups
are included in the analysis of Section~\ref{sec:le2generic}. 

Finally, in Section~\ref{sec:reductive}, 
we prove Theorems~\ref{thm:tf-reductive-main} and ~\ref{thm:main-grp}
in the general case where $\G$ is a connected reductive algebraic group. 

\subsubsection*{Acknowledgments} In the course of writing 
this paper, the authors checked many examples 
using the computer algebra system Magma \cite{magma}. 
We thank John Cannon 
and his team for providing this wonderful tool. 
The authors also acknowledge Gunter Malle for helpful 
conversations at various stages throughout this project. 
In particular we thank him for his detailed comments on an earlier
version of this manuscript that, among other things, clarified the
treatment of the very twisted groups in Section~\ref{sec:lg3generic}.


\section{Preliminaries}\label{sec:endotriv}
\begin{verse}
{\em Throughout the paper $G$ is
finite group (maybe subject to more assumptions) and $k$ is a field of some positive characteristic
$\ell$, dividing the order of $G$}.
\end{verse}

In the rest of section we provide some background material used throughout this paper.
\begin{defn}
A finitely generated $kG$-module $M$ is {\em endotrivial} if
$\Hom_k(M, M)\cong k \oplus P$ where $P$ is a projective
$kG$-module and $k$ is the trivial $kG$-module.
Thus, $\Hom_k(M,M) \cong k$ in the stable category of 
$kG$-modules modulo projectives.
The set $T(G)$ of stable isomorphism classes of endotrivial $kG$-modules
forms a group under $-\otimes_k-$, called the
{\em group of endotrivial $kG$-modules}.
\end{defn}

Recall that in this context, $\Hom_k(M,M)\cong M^*\otimes_kM$ as
$kG$-modules, and therefore the endotrivial modules are the invertible
objects under tensor product in the stable module category of $kG$-modules
modulo projectives.

The group $T(G)$ is a finitely generated 
abelian group (\cite[Corollary 2.5]{CMN}) hence
$T(G) \cong TT(G)\oplus \TF(G)$, for $TT(G)$ the torsion subgroup of $T(G)$, a
finite group, and $\TF(G) = T(G)/TT(G)$, a
finitely generated free abelian group.
In Theorem~\ref{thm:poset2}, the rank of $\TF(G)$
is stated to be equal to the number of conjugacy classes of maximal
elementary abelian $\ell$-subgroups of $G$ of rank $2$ if
$\rk_\ell(G)=2$, or that number plus one in case $\rk_\ell(G)>2$.

We start with a few elementary but useful observations.

\begin{lemma}  \label{lem:normal3rk} \label{lem:ele1} Let $P$ be a
  finite $\ell$-group. 
\begin{itemize}
\item[(a)] If $P$ has a normal elementary abelian $\ell$-subgroup $H$
of $\ell$-rank $\ell+1$ or more, then $P$ has no maximal elementary
abelian subgroups of rank $2$.
\item[(b)] If $P$ has $\ell$-rank $2$ and the center of $P$ is not cyclic,
then $P$ has exactly one maximal elementary
abelian subgroup with $\ell$-rank $2$.
\item[(c)] If $P$ has $\ell$-rank at least $3$ and the center of $P$ is not
cyclic, then $P$ has no maximal elementary abelian subgroups of $\ell$-rank
$2$. 
\end{itemize}
\end{lemma}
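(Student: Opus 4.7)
The plan is to treat the three parts separately, with (b) and (c) sharing a common mechanism and (a) requiring an additional Jordan block input. For (b) and (c), I would fix a rank-$2$ elementary abelian subgroup $E_0 \leq Z(P)$, which exists since $Z(P)$ is noncyclic. Given any elementary abelian subgroup $F$ of rank $2$, the product $FE_0$ is abelian (since $E_0$ is central) and generated by elements of order dividing $\ell$, hence elementary abelian. In case (b), $\rk_\ell(P) = 2$ forces $FE_0 = F = E_0$, so $E_0$ is the unique rank-$2$ elementary abelian subgroup, and it is automatically maximal. In case (c), either $F = E_0$, in which case $F$ is central and enlarging it by any rank-$3$ elementary abelian subgroup of $P$ (which exists by hypothesis) again produces a strictly larger elementary abelian; or $F \neq E_0$, in which case $FE_0$ is elementary abelian of rank $3$ properly containing $F$. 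Either way $F$ is not maximal.

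For (a), let $E \leq P$ be elementary abelian of rank $2$; the goal is to strictly enlarge $E$ inside $EH$, using the conjugation action of $E$ on the $\F_\ell$-vector space $H$. If $E \leq H$ we are done since $\rk_\ell(H) \geq \ell + 1 \geq 3$. If $E \cap H = 1$, the standard fixed point lemma, that any action of an $\ell$-group on a nonzero $\F_\ell$-vector space has nontrivial fixed points, produces $1 \neq h \in C_H(E)$, and $\langle E, h \rangle$ is then elementary abelian of rank $3$, contradicting maximality.

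The remaining case $E \cap H = \langle a \rangle$, with $E = \langle a, b \rangle$ and $b \notin H$, is where the hypothesis $\rk_\ell(H) \geq \ell + 1$ is used in full. The element $b$ has order $\ell$ and acts linearly on $H$, so $(b-1)^\ell = 0$ as an endomorphism, and the Jordan blocks of $b$ acting on $H$ have size at most $\ell$. Consequently $\dim C_H(b)$, which equals the number of Jordan blocks, satisfies $\dim C_H(b) \geq (\dim H)/\ell \geq (\ell+1)/\ell > 1$, hence $\dim C_H(b) \geq 2$. Since $a \in C_H(b)$, one may pick $h \in C_H(b) \setminus \langle a \rangle \subseteq H \setminus E$, and then $\langle E, h \rangle$ is elementary abelian of rank $3$, the final contradiction. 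The only substantive step is the Jordan block bound, which is entirely standard; no external results are needed for the lemma.
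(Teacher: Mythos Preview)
Your proof is correct and follows the same approach as the paper. For (b) and (c) you simply supply the details the paper omits as ``straightforward,'' and for (a) you use the same Jordan block/eigenvector count on the conjugation action on $H$; your case split according to $E\cap H$ is in fact a bit more careful than the paper's argument, which only tracks a single element $x$ and would otherwise need the extra observation that $C_H(E)\neq 1$ to conclude.
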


\begin{proof}
For (a), let $x\in P$ be an element of order $\ell$ in a 
maximal elementary abelian subgroup. If $x\in H$,
then $C_P(x)\geq H$ has $\ell$-rank at least $3$ by assumption and the
statement holds.
If $x\notin H$, then
the conjugation action of $x$ on $H$ can be regarded as a linear action on an
$\bF_\ell$-vector space of dimension at least $\ell+1$, and therefore must
have at least two linearly independent eigenvectors for the eigenvalue
$1$. That is, conjugation by $x$ fixes two nontrivial distinct
generators of $H$ in some suitable generating set,
and since $x\notin H$, we conclude that the subgroup of $P$ generated
by $x$ and these two elements is elementary abelian of rank $3$. So
$x$ is not contained in a maximal elementary abelian subgroup of $P$
of rank $2$, and part (a) follows.
The proofs of parts (b) and (c) are straightforward. 
\end{proof}

For our analysis, we employ results of Glauberman--Mazza and 
MacWilliams that guarantee, under suitable conditions 
on the $\ell$-rank of the finite group $G$, that the group has no 
maximal elementary abelian $\ell$-subgroups of rank $2$. 
The sectional $\ell$-rank of a group $G$ is the maximal $\ell$-rank of
any section of $G$. A section of $G$ is the quotient of a subgroup of $G$
by a normal subgroup of that subgroup.

\begin{thm} \label{T:norank2} 
Let $G$ be a finite group and let ${\ell}$ be a prime. 
\begin{itemize}
\item[(a)] \cite[Theorem A]{GlMa} If $\ell\geq 3$ and 
$\rk_{\ell}(G)\geq {\ell}+1$, then 
$G$ has no maximal elementary abelian $\ell$-subgroups of rank $2$. 
\item[(b)] \cite[Four Generator Theorem]{McW} 
Suppose that $G$ has sectional $2$-rank at least $5$.
Then a Sylow $2$-subgroup of $G$ has a normal elementary abelian
subgroup with $2$-rank $3$. In such a case $G$ has no maximal
elementary abelian $2$-subgroup of rank $2$.
\end{itemize} 
\end{thm}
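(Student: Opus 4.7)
The plan is that both parts of Theorem~\ref{T:norank2} reduce to invoking external results already in the literature, with a small additional step in part~(b) to combine one of them with Lemma~\ref{lem:normal3rk}(a). The statements of \cite[Theorem~A]{GlMa} and \cite[Four Generator Theorem]{McW} already supply nearly everything; the only genuine argument is the last implication in~(b).

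For part~(a), I would simply quote \cite[Theorem~A]{GlMa}: its conclusion is precisely that for $\ell \ge 3$ with $\rk_\ell(G) \ge \ell+1$, the group $G$ has no maximal elementary abelian $\ell$-subgroup of rank~$2$. Nothing further is needed.

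For part~(b), I would first apply \cite[Four Generator Theorem]{McW}: the hypothesis that $G$ has sectional $2$-rank at least $5$ yields a Sylow $2$-subgroup $P$ of $G$ containing a normal elementary abelian subgroup $H$ of $2$-rank~$3$. Setting $\ell = 2$ gives $\ell + 1 = 3$, so Lemma~\ref{lem:normal3rk}(a) immediately implies that $P$ itself has no maximal elementary abelian subgroup of rank~$2$. It then remains to promote this local statement to $G$: any elementary abelian $2$-subgroup $E$ of $G$ lies in some Sylow $2$-subgroup, which is conjugate to $P$; by Sylow conjugacy the same failure of rank-$2$ maximality holds there, so if $E$ had rank~$2$ it would be properly contained in a strictly larger elementary abelian subgroup of that Sylow, contradicting maximality of $E$ in $G$.

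The main obstacle, such as it is, is purely notational: once the index $\ell+1 = 3$ is identified with the hypothesis of Lemma~\ref{lem:normal3rk}(a), and Sylow conjugacy is used to transfer the statement in $P$ to one in $G$, the proof is complete. There are no sharpness issues or small-rank exceptions to track beyond those already built into the hypotheses of the two cited theorems.
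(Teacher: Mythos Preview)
Your proposal is correct and matches the paper's approach: part~(a) is a direct citation, and for part~(b) both you and the paper deduce the final sentence from the existence of a normal rank~$3$ elementary abelian subgroup in a Sylow $2$-subgroup via Lemma~\ref{lem:normal3rk}(a). The only minor difference is expository: the paper additionally explains that MacWilliams' theorem is originally stated in contrapositive form (a $2$-group with no normal rank~$3$ elementary abelian subgroup has all subgroups $4$-generated) and that the passage from $G$ to its Sylow $2$-subgroup uses the equality of sectional $2$-ranks, whereas you take the reformulated version as given by the citation.
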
 

Part (b) in Theorem~\ref{T:norank2} is a reformulation,
which better suits our analysis, of \cite[Four
Generator Theorem]{McW}. The theorem (which was part of the program 
to classify finite simple groups) asserts that, in a finite
$2$-group $G$ with no normal elementary abelian subgroup of rank $3$,
every subgroup can be generated by at most four elements. 
Thus, if the sectional $2$-rank of a $2$-group $G$ is $5$ or more, then some  
Frattini quotient $P/\Phi(P)$, for $P$ a subgroup
of $G$, has $2$-rank $5$ or more. By the theorem, $G$ has a normal         
elementary abelian subgroup with $2$-rank $3$, implying that
$G$ has no maximal elementary abelian subgroup of rank $2$, 
by Lemma  \ref{lem:normal3rk}. Our
interpretation follows because, for any $\ell$, the sectional $\ell$-rank of
a finite group is equal to that of its Sylow $\ell$-subgroups.

We also record the following result, which is used to relate 
the torsion free ranks of groups of endotrivial modules of finite groups
of Lie type arising from isogenous algebraic groups.

\begin{prop} \label{P:groupexactseq}
Let 
$$
\xymatrix{1\ar[r]&Z\ar[r]&H\ar[r]&G\ar[r]&K\ar[r]&1}
$$
be an exact sequence of finite groups where $Z$ and $K$ have order prime to
$\ell$, and $Z$ central in $H$. Then the induced map
$\CA^{\geq
    2}_\ell(H)/H \twoheadrightarrow \CA^{\geq 2}_\ell(G)/G$ is a
  surjection, which is an isomorphism of posets if the image of $H$ in $G$ controls $\ell$-fusion in $G$.
 In particular $\TF(H) \cong \Z$ implies $\TF(G) \cong \Z$, with the
 converse also true if the image of $H$ in $G$ controls $\ell$-fusion
 in $G$ (e.g., if $K = 1$).
  \end{prop}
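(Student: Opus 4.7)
The plan is to split the four-term exact sequence into the two short exact sequences $1 \to Z \to H \to \overline H \to 1$ and $1 \to \overline H \to G \to K \to 1$, where $\overline H = H/Z$ is the image of $H$ in $G$. I will show the first extension induces an isomorphism of orbit posets on $\CA^{\geq 2}_\ell$, and the second induces a surjection which is an isomorphism precisely under the fusion control hypothesis; composing these gives the poset claim, and Theorem~\ref{thm:poset2} then translates it into the statement on $\TF$.

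For the first extension, since $|Z|$ is coprime to $\ell$ and $Z$ is central in $H$, every elementary abelian $\ell$-subgroup $E \leq H$ satisfies $E \cap Z = 1$ and hence embeds in $\overline H$. Conversely, given $\overline E \in \CA^{\geq 2}_\ell(\overline H)$, its full preimage in $H$ is a central extension of $\overline E$ by $Z$ of coprime orders, so it splits as a direct product $Z \times E$ with $E$ its unique (hence characteristic) Sylow $\ell$-subgroup. This produces an $H$-equivariant isomorphism of posets $\CA^{\geq 2}_\ell(H) \xrightcong \CA^{\geq 2}_\ell(\overline H)$ which descends to an isomorphism of orbit posets since $Z$ acts trivially.

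For the second extension, $[G:\overline H] = |K|$ is coprime to $\ell$, so a Sylow $\ell$-subgroup of $G$ lies in $\overline H$ and every $\ell$-subgroup of $G$ is $G$-conjugate into $\overline H$; this gives the surjection $\CA^{\geq 2}_\ell(\overline H)/\overline H \twoheadrightarrow \CA^{\geq 2}_\ell(G)/G$. Now assume $\overline H$ controls $\ell$-fusion in $G$. Injectivity on objects is immediate from the definition of fusion control. For the order relation, suppose $E_1, E_2 \in \CA^{\geq 2}_\ell(\overline H)$ satisfy $gE_1 g^{-1} \leq E_2$ for some $g \in G$; then $gE_1 g^{-1} \leq \overline H$, so fusion control furnishes $h \in \overline H$ with $hE_1 h^{-1} = gE_1 g^{-1} \leq E_2$, showing that $[E_1] \leq [E_2]$ already in the source poset. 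This order-relation half is the only mildly delicate point; when $K = 1$ we have $\overline H = G$, so fusion control is automatic.

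Finally, by Theorem~\ref{thm:poset2} the rank of $\TF(-)$ equals the number of connected components of $\CA^{\geq 2}_\ell(-)/(-)$. If $\TF(H) \cong \Z$, then $\ell \mid |H|$, whence $\ell \mid |\overline H| \mid |G|$ and $\rk_\ell(G) \geq 2$, so $\CA^{\geq 2}_\ell(G)/G$ is nonempty; the surjection of orbit posets induces a surjection on $\pi_0$, forcing exactly one component on the target as well, i.e., $\TF(G) \cong \Z$. The converse under fusion control is immediate from the orbit-poset isomorphism established above.
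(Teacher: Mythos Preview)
Your proof is correct and follows essentially the same approach as the paper's. The paper's argument is terser---it simply observes that since $|Z|$ and $|K|$ are prime to $\ell$, the map $H \to G$ induces a bijection of $\ell$-subgroups, that $H$-conjugacy implies $G$-conjugacy, and that the converse holds under fusion control---whereas you unpack these assertions via the factorization through $\overline H = H/Z$ and verify the order-reflecting property of the induced poset map explicitly. Your care with the nonemptiness of $\CA^{\geq 2}_\ell(G)/G$ in the $\TF$ deduction is a detail the paper leaves implicit.
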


\begin{proof}
Since $K$ and $Z$ have orders that are prime to $\ell$, 
the map $H \to G$ induces a bijection of $\ell$-subgroups.
Furthermore, conjugacy in $H$ implies conjugacy in $G$, with the
converse also being true if the image of $H$ in $G$ controls
$\ell$-fusion in $G$. Note that the image of $H$ in $G$ is isomorphic
to $H/Z$.
The statement about torsion free ranks follows using the standard translation by Theorem~\ref{thm:poset2}.
\end{proof}

We conclude this section with a discussion of our conventions for
finite groups of Lie type. 

\begin{rem}[Finite groups of Lie type] \label{rem:lietype}
As stated in Definition~\ref{def:lietype} we take a finite group of Lie
type to mean a group of the form $G = \G^F$, for $\G$ a connected 
reductive algebraic group over an algebraically closed field of
positive characteristic $p$,
and $F$ a Steinberg endomorphism. We refer to \cite{MT}, or the
original \cite{steinberg68}, for a
thorough description of properties of such groups, but quickly go
through a few key points to aid to the reader: A connected reductive algebraic group
$\G$ over an algebraically closed field is classified by its root datum
$\D$ (which is field independent). The action of $F$ on $\G$ (up to inner automorphisms) is
also determined by its effect on $\D$ (up to Weyl group conjugation) allowing for a ``combinatorial'' classification
of finite groups of Lie type $\G^F$. It is most explicit when $\G$ is further
assumed simple, see \cite[Table~22.1]{MT}. In this case  $\G^F$ is
``close'' to being simple, in the following sense:
A formula of Steinberg
\cite[Corollary~12.6(b)]{steinberg68} says that $ G/O^{p'}(G)
\xrightcong \pi_1(\G)_F$, the coinvariants of the action of
$F$ on the fundamental group $\pi_1(\G)$. (As usual $O^{p'}(-)$
denotes the
smallest normal subgroup of $p'$ index, and $O_{p'}(-)$ denotes 
the largest normal subgroup of $p'$ order.)
Thus, subgroups $H$ with
$O^{p'}(G) \leq H \leq G$ can be parametrized by ``Lie theoretic''
data
consisting of $\G$, $F$, and a
subgroup of $\pi_1(\G)_F$. They are hence ``close'' to finite groups of
Lie type, though, e.g., the order formula
\cite[Corollary~24.6]{MT} does not hold --- some books dealing with finite
{\em simple} groups,  e.g., \cite[Definition~2.2.1]{GLS}, instead
refer to groups of the form $O^{p'}(\G^F)$ as finite groups of Lie type. Dual to $p'$-quotients we have that
\begin{equation}
 Z(G) =  O_{p'}(G) = Z(\G)^F
\end{equation}
(see \cite[Lemma~24.12]{MT}).
Normal $p'$--subgroups and quotients are related, as
$ 
   \G_{sc}^F/Z(\G_{sc}^F) \xrightcong O^{p'}((\G/Z(\G))^F),
$  
for  $\G_{sc}$ the simply connected
cover of $\G$ (see
\cite[Proposition~24.21]{MT}).  With a few small exceptions  \cite[Theorem~24.17]{MT},
this is a finite simple group, if $\G$ is simple.
For example $\PSL_n(q)\cong O^{p'}(\PGL_n(q))$ is simple unless
$(n,q)$ is $(2,2)$ or $(2,3)$.
We determine $\TF(H)$ for
for such groups $H$ in Section~\ref{sec:assoc}.
\end{rem}

\section{When ${\mathbb G}$ is simple, $3 \leq \ell \neq p$: Generic
  case}
  \label{sec:lg3generic} 

In this section $G$ is a finite group of Lie type as in
Definition~\ref{def:lietype}, where we furthermore assume that the 
ambient algebraic group $\G$ is
simple (and hence determined by an irreducible root system and an
isogeny type). The aim of Sections~\ref{sec:lg3generic} and ~\ref{sec:lg3specific} is to prove the following. 

\begin{thm}\label{thm:lgeq3} Let $G=\G^{F}$ be a finite group of Lie type where $\G$ is a simple algebraic group. Assume that
$3 \leq \ell \neq p$ and that $\rk_\ell(G) \geq 2$.
Then $\TF(G)\cong {\mathbb Z}$ except in the following cases:
\begin{itemize} 
\item[(a)] ${\ell }\geq 3$ and $G$ is isomorphic to either 
$\PGL_{\ell}(q)$ with $\ell$ dividing
$q-1$ or  $\PGU_{\ell}(q)$ with $\ell$ dividing $q+1$. 
In these cases,  
$\TF(G)\cong {\mathbb Z}\oplus  {\mathbb Z}\oplus  {\mathbb Z}$.
\item[(b)] ${\ell }=3$ and $G$ is isomorphic to $\ls3D_{4}(q)$. 
In this case,  $\TF(G)\cong {\mathbb Z}\oplus  {\mathbb Z}$.
\end{itemize} 
\end{thm}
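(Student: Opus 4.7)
The plan is to translate the statement, via Theorem~\ref{thm:poset2}, into counting the $G$-conjugacy classes of maximal elementary abelian $\ell$-subgroups of rank $2$, and to add $1$ if $\rk_\ell(G)>2$. So the task becomes: show that for the listed exceptions the counts are $3$, $2$, or $1$ (giving rank $3$, $2$, or $1$ after the $+1$), while in all other cases there are no maximal rank-$2$ elementary abelian $\ell$-subgroups if $\rk_\ell(G)>2$, and a single conjugacy class of elementary abelian $\ell$-subgroups of rank $2$ if $\rk_\ell(G)=2$.

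First I would filter by $\ell$-rank. By Theorem~\ref{T:norank2}(a), if $\rk_\ell(G)\geq \ell+1$, then $G$ admits no maximal rank-$2$ elementary abelian $\ell$-subgroup, so $\TF(G)\cong\Z$ automatically; in particular this sweeps away all sufficiently large Lie rank cases. It is precisely here that $\PGL_\ell(q)$ with $\ell\mid q-1$ and $\PGU_\ell(q)$ with $\ell\mid q+1$ slip through the net: they have $\ell$-rank exactly $\ell-1$, so the Glauberman--Mazza bound does not apply, and a Sylow $\ell$-subgroup (an extension of an elementary abelian group of rank $\ell-1$ on which a cyclic group of order $\ell$ acts by a single Jordan block) must be analyzed separately. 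Next, for groups of Lie type with $\ell\mid|G|$ in non-defining characteristic, the connectedness of $\CA^{\geq 2}_\ell(G)/G$ is already known in the ``generic'' situation by the general ${\mathcal L}_\ell^*$-structure results in \cite[Section~4.10]{GLS}, which reduces the remaining analysis to groups whose Lie rank is small relative to the order of $\ell$ in $q$. I would formalize this reduction as a single proposition (to be used in the following section) that lists the finitely many residual $(\G,F,\ell)$-triples that must be handled by hand.

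Once the reduction is made, I would work through the residual list case by case, using the standard description of Sylow $\ell$-subgroups of $\G^F$ in non-defining characteristic via $F$-stable maximal tori of order divisible by $\ell$, together with the action of the associated relative Weyl groups. For each residual case I would enumerate the maximal elementary abelian $\ell$-subgroups contained in the normalizer of such a torus, using Lemma~\ref{lem:normal3rk} to discard those sitting inside a larger $\ell$-group whose center is noncyclic. This will produce a single conjugacy class in all but the three exceptional families: in $\PGL_\ell(q)$ and $\PGU_\ell(q)$ with $\ell\geq 3$ the split and non-split maximal tori intersected with the $\ell$-part, combined with a ``diagonal'' rank-$2$ subgroup, will account for the three classes predicted (yielding $\TF\cong\Z^3$); for ${}^3D_4(q)$ at $\ell=3$ the action of the triality Steinberg endomorphism on a maximally split torus at $\ell=3$ produces two inequivalent rank-$2$ subgroups and $\rk_3=2$ (yielding $\TF\cong\Z^2$).

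The main obstacle will be the last step: carrying out these tori/fusion computations cleanly and uniformly for the ``very twisted'' groups (Suzuki and Ree types ${}^2B_2$, ${}^2G_2$, ${}^2F_4$), and for the triality group ${}^3D_4$, where the action of $F$ on the root datum is not the identity and the usual $F$-stable torus bookkeeping is more delicate. In particular, the passage from $\G^F$ to the various isogenous quotients (adjoint versus simply connected versus intermediate forms) must be tracked with Proposition~\ref{P:groupexactseq}, since the number of rank-$2$ conjugacy classes is isogeny-sensitive. This is exactly why the exception list singles out the adjoint forms $\PGL_\ell(q)$ and $\PGU_\ell(q)$ rather than the simply connected $\SL_\ell(q)$ and $\SU_\ell(q)$: the central quotient merges a class of tori into several classes of maximal elementary abelian subgroups of rank $2$, and identifying precisely when and how this happens is the crux of the argument.
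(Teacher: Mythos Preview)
Your overall reduction strategy---using Theorem~\ref{T:norank2}(a) to dispose of groups with $\rk_\ell(G)\geq\ell+1$, then invoking \cite[Section~4.10]{GLS} for the generic connectedness of $\CA^{\geq 2}_\ell(G)/G$, and finally reducing to a short explicit list---is exactly the paper's approach; see Proposition~\ref{prop:oddreduce} and its proof. The gap is in your proposed analysis of the $\PGL_\ell(q)$ and $\PGU_\ell(q)$ exceptions, which is where the argument actually has content, and where your description is substantively wrong.

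You write that the three classes in $\PGL_\ell(q)$ come from ``the split and non-split maximal tori intersected with the $\ell$-part, combined with a `diagonal' rank-$2$ subgroup.'' This is not the correct picture. A Sylow $\ell$-subgroup of $\GL_\ell(q)$ sits entirely inside the normalizer of the \emph{split} diagonal torus and has the form $(C_{\ell^s})^{\ell}\rtimes C_\ell$ with $\ell^s=|q-1|_\ell$; in particular the normal abelian part is not elementary abelian unless $s=1$, contrary to your description. In $\PGL_\ell(q)$ the three components of $\CA^{\geq 2}_\ell(G)/G$ are: (i) one \emph{toral} class, the image of the $\ell$-torsion of the diagonal torus, which has rank $\ell-1$ and for $\ell>3$ is the ``large'' component contributing the $+1$; and (ii) two genuinely \emph{non-toral} rank-$2$ classes, each generated modulo the center by the permutation $\ell$-cycle $x$ (or a twisted coset representative $ax$) together with a diagonal element $b$. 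The crucial mechanism is that in $\GL_\ell(q)$ the subgroups $\langle x,b,z\rangle$ and $\langle ax,b,z\rangle$ are nonabelian extraspecial of order $\ell^{3}$; only after passing to the quotient by the center do they become elementary abelian of rank $2$. They are then distinguished by determinant: every element of order $\ell$ in one has determinant $1$, while this fails in the other. Your closing remark that ``the central quotient merges a class of tori into several classes'' is therefore backwards---the quotient does not split toral classes, it \emph{creates} new elementary abelian subgroups out of nonabelian ones.

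For ${}^3D_4(q)$ at $\ell=3$, your sketch is too vague to be wrong, but the paper's argument does not proceed via torus actions as you suggest: it uses the explicit description of the Sylow $3$-subgroup as a $3$-group of maximal nilpotency class and separates the two rank-$2$ classes by a fusion-system argument (one is $\mathcal F$-Alperin, the other is not).
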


The proof of Theorem~\ref{thm:lgeq3} entails a reduction, accomplished
in this section, to some 
cases of small rank and specific types.  
The analysis of the small rank cases 
is done in Section~\ref{sec:lg3specific}. 

The following is taken from \cite[Theorem 4.10.3]{GLS}.

\begin{thm} \label{T:uniquemax1}
Let $G=\G^F$ be a finite group of Lie type arising from a simple algebraic group $\G$ with a Steinberg endomorphism $F$,
and $\ell \neq p$, and write $\G  \cong \G_{sc}/Z$ for a finite central
subgroup $Z$. Assume that
\begin{itemize}
\item[(i)] the prime $\ell$ does not divide the order 
of $Z^F$. This is true if $\ell \nmid |Z(\G_{sc})^F|$.
\end{itemize}


\begin{itemize}
\item[(ii)] the prime $\ell$ is odd and good for $\G$ (meaning that
$\ell > 3$ if the type of $\G$ is $E_6$, $E_7$, $F_4$ or
$G_2$, $\ell >5$ if the type of $\G$ is $E_8$).
\end{itemize}
Then any elementary abelian $\ell$-subgroup $A$ of $G$ is contained in an
elementary abelian $\ell$-subgroup of maximal rank.
Also, any two elementary abelian $\ell$-subgroups of maximal
rank are conjugate except possibly if $\ell=3$
and $G \cong \ls3D_4(q)$.
\end{thm}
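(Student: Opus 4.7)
The plan is to prove the theorem by a two-step reduction. First, I would show that every elementary abelian $\ell$-subgroup of $G = \G^F$ is \emph{toral}, i.e., contained in an $F$-stable maximal torus of $\G$. Second, I would use the Springer--Broué--Malle theory of Sylow $\Phi_e$-tori to classify maximum-rank elementary abelian $\ell$-subgroups inside such tori. Combining the two steps will yield both conclusions of the theorem.

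For torality, the key input is that, under hypotheses (i) and (ii), the centralizer $C_\G(x)$ of any semisimple $\ell$-element $x \in \G$ is connected reductive: this is Steinberg's connectedness-of-centralizers criterion, applicable precisely because $\ell$ is good and coprime to the relevant torsion of the fundamental group. Iterating this for an elementary abelian subgroup $A \leq G$, the centralizer $C_\G(A)$ is connected reductive and $F$-stable, hence contains an $F$-stable maximal torus $\T$ of $\G$, and $A \subseteq Z(C_\G(A)) \subseteq \T$. For the torus analysis, let $e$ be the multiplicative order of $q$ modulo $\ell$. The $F$-stable maximal tori of $\G$ whose fixed points have maximum $\ell$-rank form a single $G$-conjugacy class --- the Sylow $\Phi_e$-tori --- and for such a $\T$ the Sylow $\ell$-subgroup of $\T^F$ is abelian of rank $\rk_\ell(G)$, with a unique maximum elementary abelian subgroup, namely its subgroup $\Omega_1$ of elements of order dividing $\ell$. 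Combining: any toral elementary abelian $\ell$-subgroup lies in some $\T^F$ and hence inside the $\Omega_1$ of some Sylow $\Phi_e$-torus, which has rank $\rk_\ell(G)$; this proves the first conclusion. Uniqueness up to $G$-conjugacy for maximum-rank elementary abelians follows since each must coincide with the $\Omega_1$ of some Sylow $\Phi_e$-torus, and all such tori are $G$-conjugate.

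The main obstacle is the exceptional case $G \cong \ls3D_4(q)$ at $\ell = 3$. Both hypotheses (i) and (ii) still hold here (since $3$ is good for $D_4$ and does not divide $|Z(\Spin_8)^F|$), so the torality argument still applies, but the Steinberg endomorphism involves a triality graph automorphism of order $3$, coinciding with $\ell$, which creates a non-generic action on the torus and its Weyl group. Consequently, the component of the normalizer of a Sylow $\Phi_e$-torus acting on $\Omega_1$ need not act transitively on the maximum-rank elementary abelians, so uniqueness can genuinely fail. Determining precisely whether and how it fails here requires an explicit case analysis with the structure of $\ls3D_4(q)$ at $\ell = 3$, which is exactly what the theorem statement accommodates by excluding this case from the uniqueness claim.
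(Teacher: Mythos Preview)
Your two-step strategy is essentially the argument underlying \cite[Theorem~4.10.3(e),(f)]{GLS}, which the paper simply cites for the simply connected case. The paper's proof is therefore much shorter: quote GLS for $\G = \G_{sc}$, then reduce the general isogeny type to that case by applying Proposition~\ref{P:groupexactseq} to the exact sequence $1 \to Z^F \to G_{sc} \to G \to Z_F \to 1$, using that $|Z^F| = |Z_F|$ is prime to $\ell$. So where the paper black-boxes GLS and adds only the isogeny transfer, you are attempting the full argument directly in $\G$.

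That direct approach has a gap in the torality step. Hypothesis~(i) only gives $\ell \nmid |Z^F|$, which is weaker than $\ell \nmid |Z| = |\pi_1(\G)|$; these differ in type $A$, for instance $\G = \PGL_\ell$ with $\ell \nmid q-1$, where $Z^F = 1$ but $|Z| = \ell$. In that situation Steinberg's criterion does not apply to $\G$: an $\ell$-element $x \in G$ conjugate over $\overline{\F_q}$ to the image of $\mathrm{diag}(1,\omega,\dots,\omega^{\ell-1})$ has $C_\G(x)/C_\G(x)^0 \cong C_\ell$, so your iteration to get $C_\G(A)$ connected fails as stated. The repair is exactly the paper's reduction: lift $A$ to $\tilde A \leq G_{sc}$ via the bijection on $\ell$-subgroups, run the connectedness argument in $\G_{sc}$ where it is valid, and project. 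Your second step is also underspecified: having $A \leq \T^F$ for \emph{some} $F$-stable maximal torus does not yet put $A$ inside the $\Omega_1$ of a Sylow $\Phi_e$-torus, since an arbitrary $\T$ need not contain one; you still owe the argument that $\T$ can be chosen so that the Sylow $\ell$-subgroup of $\T^F$ has rank $\rk_\ell(G)$.
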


\begin{proof} Assume first that $\G$ is simply connected, i.e.,
  $Z$ is trivial.
Under condition (ii), \cite[Theorem 4.10.3(e)]{GLS} says that
 every elementary abelian $\ell$-subgroup of $G$ is contained
  in an elementary abelian $\ell$-subgroup of maximal rank. Finally \cite[Theorem 4.10.3(f)]{GLS}
implies that all maximal elementary abelian
  $\ell$-subgroups of $G$ are conjugate, unless $G \cong \ls3D_4(q)$, again  using (ii). This proves the theorem in the simply connected case.

Because $|Z^F|$ is assumed prime to $\ell$, 
the conclusion for $G$ follows from that of $G_{sc}$ by
Proposition~\ref{P:groupexactseq} applied the exact sequence 
\begin{equation} \label{eq:fourtermsequence}
\xymatrix{1\ar[r]&Z^F \ar[r]&G_{sc}\ar[r]&G\ar[r]&Z_F \ar[r]&1},
\end{equation}
of
\cite[Lemma~24.20]{MT},
where $|Z^F|=|Z_F|$ and $|G_{sc}|=|G|$
by \cite[Corollary~24.6]{MT}.
\end{proof}


The next proposition builds on Theorem~\ref{T:uniquemax1} and
handles many of the cases in Theorem~\ref{thm:lgeq3}, with the rest
being postponed to the next section. In the proof we employ the
non-standard notation, where e.g., $B_{2}(p)$ without subscript ``sc'' or ``ad'', denotes {\em any} group
arising from a simple algebraic group $\G$  over an algebraically
closed field of characteristic $p$ with root system
$B_2$, and $F = F_p$ is the standard Frobenius given by raising to the $p$th
power.

\begin{prop} \label{prop:oddreduce}
Let $\ell$ be an odd prime, $\ell \neq p$.
Suppose that $G=\mathbb G^{F}$ is a finite group of Lie type where ${\mathbb G}$ is a simple algebraic group and 
$F$ is a Steinberg endomorphism. Assume that the $\ell$-rank of $G$ is at
least $2$, and $G$ does not have one of
the forms $A_{n-1}(q)$ with $\ell$ dividing both $q-1$ and $n$,
$\ls2A_{n-1}(q)$ with $\ell$ dividing both $q+1$ and $n$, or
$\ls3D_4(q)$ with $\ell = 3$. Then $\TF(G) \cong \Z$.
\end{prop}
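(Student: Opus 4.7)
The plan is to translate the assertion $\TF(G)\cong\Z$, via Theorem~\ref{thm:poset2}, into showing that the orbit poset $\CA^{\geq 2}_\ell(G)/G$ has exactly one connected component: equivalently, $G$ has no maximal rank~$2$ elementary abelian $\ell$-subgroup when $\rk_\ell(G)>2$, and a unique conjugacy class of them when $\rk_\ell(G)=2$.

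The workhorse is Theorem~\ref{T:uniquemax1}, which delivers exactly this connectivity under the two hypotheses (i) $\ell\nmid|Z(\G_{sc})^F|$ and (ii) $\ell$ is odd and good for $\G$, with the sole conclusion-exception $G\cong\ls3D_4(q)$ at $\ell=3$. The first step is to run through the classification of simple algebraic groups \cite[Table~22.1]{MT} together with Steinberg's formula $|Z(\G_{sc})^F|=\gcd(n,q\mp 1)$ in types $A_{n-1},\ls2A_{n-1}$, and record, for odd $\ell\neq p$, exactly when (i) or (ii) can fail. Condition~(i) fails only for $A_{n-1}(q)$ or $\ls2A_{n-1}(q)$ with $\ell\mid\gcd(n,q\mp1)$, or for $E_6(q)/\ls2E_6(q)$ at $\ell=3$ with $3\mid q\mp1$; condition~(ii) fails only at $\ell=3$ in types $E_6,E_7,F_4,G_2$, and at $\ell\in\{3,5\}$ in type $E_8$. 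The three exclusions listed in the proposition precisely absorb the type~$A$ failures of (i) together with the triality exception, so Theorem~\ref{T:uniquemax1} already settles every classical and Suzuki/Ree type at every permissible odd $\ell$, and every exceptional type at primes above its bad primes.

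For the residual exceptional-type-at-bad-prime cases I would invoke Theorem~\ref{T:norank2}(a): once $\rk_\ell(G)\geq\ell+1$, $G$ has no maximal rank~$2$ elementary abelian $\ell$-subgroup, and the orbit count reduces to $1+0=1$. Since $\ell\neq p$, one of $\Phi_1(q),\Phi_2(q)$ is divisible by $\ell$, so a split or twisted maximal torus of algebraic rank equal to $\rk(\G)$ realises $\rk_\ell(G)\geq\rk(\G)$. This yields $\rk_3(G)\geq 4$ for $F_4(q)$ and $\rk_3(G)\geq 6,7,8$ for $E_6(q)/\ls2E_6(q),E_7(q),E_8(q)$, as well as $\rk_5(E_8(q))=8\geq 6$, which meets the rank bound in each case. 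The only genuine small-rank residues are $G_2(q)$ and $\ls2F_4(q)$ at $\ell=3$ (noting that the Ree groups $\ls2G_2$ live in characteristic $3$). These I would handle by direct inspection of the $3$-local structure: inside $G_2$, the subsystem subgroups of type $\SL_3$ (when $3\mid q-1$) or $\SU_3$ (when $3\mid q+1$) already contain a Sylow $3$-subgroup of $G_2(q)$, and a direct check shows every noncyclic elementary abelian $3$-subgroup is $G$-conjugate into it; an analogous subsystem analysis works in $\ls2F_4(q)$.

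The main obstacle will be precisely this final low-rank bad-prime verification for $G_2(q)$ and $\ls2F_4(q)$ at $\ell=3$, where neither Theorem~\ref{T:uniquemax1} nor the rank bound of Theorem~\ref{T:norank2}(a) applies and one must explicitly identify the $G$-conjugacy classes of noncyclic elementary abelian $3$-subgroups by hand; everywhere else the argument is a routine bookkeeping combination of Theorems~\ref{thm:poset2}, \ref{T:uniquemax1}, and \ref{T:norank2}.
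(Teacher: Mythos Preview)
Your overall strategy matches the paper's: apply Theorem~\ref{T:uniquemax1} wherever its hypotheses hold, then handle the residual bad-prime exceptional cases by a rank bound or by explicit subgroup analysis. The bookkeeping for when conditions (i) and (ii) fail is correct, and your treatment of $G_2(q)$ and $\ls2F_4(q)$ at $\ell=3$ via $\SL_3/\SU_3$ subsystems is essentially what the paper does. However, there is a genuine gap in your treatment of $E_8(q)$ at $\ell=5$.

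The assertion ``since $\ell\neq p$, one of $\Phi_1(q),\Phi_2(q)$ is divisible by $\ell$'' holds only for $\ell=3$. For $\ell=5$ and $q\equiv\pm2\pmod 5$, neither $q-1$ nor $q+1$ is divisible by $5$; the multiplicative order of $q$ modulo $5$ is $4$. The multiplicity of $\Phi_4$ in the order polynomial of $E_8$ is $4$ (the degrees among $2,8,12,14,18,20,24,30$ divisible by $4$ are $8,12,20,24$), so $\rk_5(E_8(q))=4$ in this case, not $8$. Since $4<\ell+1=6$, Theorem~\ref{T:norank2}(a) does not apply, and your argument does not rule out maximal rank-$2$ elementary abelian $5$-subgroups. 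The paper closes this gap differently: it exhibits $\SU_5(q^2)$ inside $E_8(q)$ of index prime to $5$ and applies Theorem~\ref{T:uniquemax1} to that simply connected type-$A$ subgroup (for which $5$ is good), concluding that every elementary abelian $5$-subgroup extends to one of rank $4$.

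A secondary inaccuracy, which does not break the argument but should be corrected: even at $\ell=3$ the claim $\rk_\ell(G)\geq\rk(\G)$ via a single maximal torus is not always true. For $E_6(q)$ with $3\mid q+1$ there is no maximal torus of order $(q+1)^6$ (since $-1\notin W(E_6)$), and the $3$-rank is only $4$; dually for $\ls2E_6(q)$ with $3\mid q-1$. Fortunately $4\geq 3+1$ still meets the threshold of Theorem~\ref{T:norank2}(a), so the conclusion survives, but your stated bounds of $6$ for $E_6/\ls2E_6$ should read $4$ in the unfavourable congruence.
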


\begin{proof} Let $Z=Z(\G_{sc})$, whose order is given in \cite[Table
  9.2]{MT} (the order of ``$\Lambda(\Phi)$''). The order of $Z^F =
  Z(G_{sc})$ is given in  \cite[Table 24.2]{MT}. It follows from Theorem \ref{T:uniquemax1}  that $\TF(G)
  \cong \Z$ if $\ell$ is odd and good for $\G$, $\ell \nmid |Z^F|$, and $G$
  is not isomorphic to $\ls3D_4(q)$.
Consequently, it
remains to discuss the cases that either (i) $\ell$ divides $\vert Z^F \vert$, 
(ii)  $\ell = 3$ and $\G$ has exceptional type or (iii) $\ell = 5$ and
$\G$ has type $E_8$. We show, by explicit arguments, that in those cases
there are also no maximal
elementary abelian $\ell$-subgroups of rank
$2$, unless the $\ell$-rank of the group is two, in which case there
is a unique one. This shows that
$\TF(G) \cong \Z$ by Theorem~\ref{thm:poset2}.

First note that case (i) is basically ruled out by the
hyphotheses. That is, if
$\G$ has type $B_n, C_n$ or $D_n$,  then $\vert Z \vert$ is a
power of $2$ and hence is not divisible by $\ell$. If $\G$ has type
$A_{n-1}$ then the only cases where $\ell~|~\vert Z^F \vert$ are
exactly the ones we exclude in our formulation of the proposition.
Finally if $\G$ is of exceptional type and $\ell~|~|Z|$, then the
only possibility is $\G$ having type $E_6$ and $\ell = 3$, which is
covered under (ii) below.

This leaves (ii) and (iii), i.e., the exceptional types with $\ell=3$ and
$E_8$ with $\ell=5$. In other words, by the classification of
Steinberg endomorphisms \cite[Theorem~22.5]{MT}, the groups we need to consider are
$G_2(q)$,   $F_4(q)$, $\ls2F_4(q)$, $E_6(q)$,
$\ls2E_6(q)$, $E_7(q)$ and $E_8(q)$ at $\ell = 3$ and $E_8(q)$ at
$\ell =5$.  (Note that $\ls2F_4(q)$ only exists in characteristic $2$
and $\ls2G_{2}(q)$ does not appear on the list as we assume $q \neq 3$.)
We handle these groups on a case-by-case basis:

{\em $F_4(q)$, $E_6(q)$, $\ls2E_6(q)$, $E_7(q)$, and $E_8(q)$ with
  $\ell =3$: } We claim that in all these cases, there is an
elementary abelian $3$--subgroup of rank at least $4$, in fact inside a
maximal torus, which then shows  $\TF(G)\cong\Z$ by
Theorem~\ref{T:norank2}(a). When $\ell \nmid |Z^F|$ it is enough to see
that the multiplicity of the cyclotomic polynomials $\Phi_1$ and
$\Phi_2$ in the order polynomial of the complete root datum
$\ls{d}\,\D$ is (at least) $4$,  by
\cite[Theorem 4.10.3(b)]{GLS}. (Recall that a complete root datum
$\ls{d}\,\D$ consists of a root datum $\D$ together with the
twisting ``$d$'', see \cite[Definition~22.10]{MT} and \cite[Definition~2.2.4]{GLS}.)
This follows by
inspecting \cite[Part I, Table 10:2]{GL}. The only cases where
we can have
$\ell \mid |Z^F|$ are (again by  \cite[Table 24.2]{MT}) when either 
$E_6(q)$ with $q\equiv1\pmod3$ or $\ls2E_6(q)$ with $q\equiv-1\pmod3$.
But as the multiplicity of $\Phi_1$, respectively $\Phi_2$,  in the order polynomial of the
complete root datum $E_6$, respectively $\ls2E_6$, is $6$, we have
that the $\ell$-rank of $G_{sc}$ is (at least) 6 for these groups (again
by \cite[Theorem 4.10.3(b)]{GLS}), and hence the $\ell$-rank of $G$ is
at least $5$.

{\em  $G_{2}(q)$ with $\ell=3$:} We give a direct argument that all
elementary abelian $3$-subgroup of rank $2$ are conjugate.
By \cite[Lemma 4]{Az}, the commutator subgroup of the
centralizer of the center of a Sylow $3$-subgroup of $G$ is isomorphic
to $\SL_{3}(q)$ if $q\equiv1\pmod 3$, respectively to $\SU_{3}(q)$ if
$q\equiv-1\pmod 3$. 
In either case, any two elementary abelian $3$-subgroups of rank $2$
are conjugate by  Theorem~\ref{T:uniquemax1}.

{\em 
$\ls2F_4(2^{2a+1})$ with $\ell =3$:}
It follows from \cite[Proofs of (10-1) and (10-2), p. 118]{GL} that
$\ls2F_4(2^{2a+1})$ contains $\SU_3(2^{2a+1})$ of index prime to $3$.
All rank $2$ elementary abelian $3$-subgroups are conjugate in
$\SU_3(2^{2a+1})$ by Theorem~\ref{T:uniquemax1}, and hence this holds
for $\ls2F_4(2^{2a+1})$ as well.

{\em $E_8(q)$ with $\ell = 5$:}
From
\cite[Proofs of (10-1) and (10-2), p. 118]{GL} we see that
$E_8(q)$ contains $\SU_5(q^2)$ as a subgroup of index prime to $5$
(the coefficients are in $\mathbb{F}_{q^4}$). Hence, every elementary
abelian $5$-subgroup of $G$ is contained in one of rank $4$ by Theorem~\ref{T:uniquemax1}. 
Consequently, there are no maximal elementary abelian
$5$-subgroups of rank $2$. 
\end{proof}

\begin{rem} \label{rem:lcgsubgroups} For the interested reader, we briefly sketch how
  Proposition~\ref{prop:oddreduce} (and Theorem~\ref{T:uniquemax1})
  could alternatively be obtained via homotopy theory.  If $\ell$ does
  not divide the order of the fundamental group of a connected
  $\ell$-compact group $BG$, then every elementary abelian $\ell$--subgroup of rank at most $2$ is conjugate into a torus by
\cite[Theorem~1.8]{AGMV08}, generalizing Borel and Steinberg's classical theorem
\cite[Theorem~2.27]{steinberg75}.
The homotopical Lang square of Friedlander--Quillen \cite[(1)]{BM07} now
relates elementary abelian $\ell$--subgroups in $BG$ to those in the homotopical
finite group of Lie type $BG^{hF}$. When $F$ is the standard Frobenius
with $q$ congruent to $1$ modulo $\ell$ this shows that the
centralizer of every element of order $\ell$ in  $BG^{hF}$ has
$\ell$-rank at least the Lie rank of the $\ell$-compact group $BG$. For general $F$
one first uses untwisting \cite[Theorem~C.8]{GL20} to reduce to the
previous case, now inside another $\ell$-compact group. Note that untwisting assumes that the order of the
twisting is prime to $\ell$, which explains why
${}^3D_4(q)$ when $\ell =3$ needs to be treated separately. Indeed
the conclusion that $TF(G)$ has rank two in this case shows that this
is not only a technical limitation.
  \end{rem}


\section{When ${\mathbb G}$ is simple, 
$3 \leq \ell \neq p$: Specific cases} 
\label{sec:lg3specific}
In this section, we examine the cases not covered by 
Proposition~\ref{prop:oddreduce}, thereby completing the proof 
of Theorem \ref{thm:lgeq3}. The analysis is case by case, and we
assume $\ell \neq p$ throughout.

\begin{proof}[Proof of Theorem~\ref{thm:lgeq3}]  
First consider $G=\ls3D_4(q)$, with $\ell=3\nmid q$.
By \cite[Part I, 10-1(4)]{GL}, a Sylow $3$-subgroup $S$
of $G$ has the form $(C_{3^{a+1}}\times C_{3^a})\rtimes C_3$,
where $3^a=|q^2-1|_3$. From \cite[Theorem 5.10]{DRV},
we also know that $S\cong B(3,2(a+1);0,0,0)$ is a $3$-group
of maximal nilpotency class of $3$-rank $2$ and order $3^{2a+2}$.
Let $A$ be the maximal subgroup of $S$ of the form
$C_{3^{a+1}}\times C_{3^a}$, let $B$ be the subgroup of $A$
formed by the elements of order $3$, and let $V_1$ be any
non-normal maximal elementary abelian subgroup of $S$
(necessarily of rank $2$). The subgroups $B$ and $V_1$ are
those denoted likewise in \cite{DRV}. In \cite[Theorem 5.10]{DRV},
the authors prove that all the non-normal maximal elementary
abelian subgroups of $S$ are $G$-conjugate and that $V_1$ is
$\mathcal F$-Alperin. This means that $V_1$
and $N_S(V_1)$ are Sylow $3$-subgroups of $C_G(V_1)$ and $N_G(V_1)$,
respectively, and that $N_S(V_1)/V_1$ has no nontrivial
normal $3$-subgroup. From the description of $S$, it is clear
that $B$ is not a Sylow $3$-subgroup of $C_G(B)$ (hence
not $\mathcal F$-Alperin), and therefore $B$ and $V_1$ cannot be
$G$-conjugate. Therefore, $\TF(G) \cong {\mathbb Z}\oplus {\mathbb Z}$. 

For the remainder of the proof assume that $G$ has type either
$A_{n-1}(q)$ with $\ell \geq 3$  and $\ell \mid q-1$ or
${}^2A_{n-1}(q)$ with $\ell \geq 3$ and $\ell \mid q+1$. We assume also that 
$\ell$ divides the order of $Z^F$ and thus $n$ is a multiple of $\ell$. 
If $n > \ell$, then $\TF(G) \cong {\mathbb Z}$ by
Theorem~\ref{T:norank2}(a).
Thus we are 
reduced to consider the cases $G=A_{\ell-1}(q)$ with
$q\equiv1\pmod{\ell}$, and $G=\ls2A_{\ell-1}(q)$ with $q\equiv-1\pmod{\ell}$.
Because $\ell$ is prime there are exactly two distinct isogeny types. 
If $\G$ is simply connected, the asserted result follows by Theorem~\ref{T:uniquemax1}.
We are left with the cases $G=\PGL_{\ell}(q)$ 
and $G=\PGU_{\ell}(q)$ with the appropriate congruences of $q$ modulo
$\ell$.
Because the $\ell$-local structures of the 
two groups are almost identical, we consider only $G=\PGL_{\ell}(q)$.

Let $\widehat{G} = \GL_{\ell}(q)$ with $\ell$ dividing $q-1$. 
We choose a Sylow $\ell$-subgroup of $\widehat{G}$ 
to be a subgroup of the normalizer of 
a maximal torus of diagonal matrices (see Theorem 
\ref{T:uniquemax1}). 
The normalizer of the torus is a
wreath product, of the form 
$N\cong\GL_{1}(q)^{\times \ell} \rtimes {\mathfrak S}_\ell$, where
${\mathfrak S}_\ell$ is the symmetic group on $\ell$ letters.
That is, it is the subgroup of diagonal matrices
with an action by the group of permutation matrices.
Let $\zeta$ be a primitive $\ell^{th}$ root of unity
in ${\mathbb F}_q$. Let $\gamma$ be a generator for the Sylow
$\ell$-subgroup of $\GL_{1}(q)$, so that
$\zeta = \gamma^{\ell^{s-1}}$ for some $s$ and $\gamma^{\ell^s}=1$.
Let $x$ be the $\ell\times\ell$ permutation matrix 
\[
x = \begin{bmatrix} 0 & 1 & 0 & \dots & 0 \\
0 & 0 & 1 & \dots & 0 \\
\vdots &\vdots & \vdots &\ddots & \vdots \\
 0 & 0 & 0 &\dots & 1\\
1 & 0 & 0 & \dots  & 0 \end{bmatrix},
\]
let $y$ be the diagonal matrix (of size $\ell$) with diagonal entries
$\gamma, 1, \dots, 1$,
and let $z = \gamma I$ be the scalar matrix. A Sylow
$\ell$-subgroup $\widehat{S}$ of $\widehat{G}$ is generated by $x$ and $y$.
Then a Sylow $\ell$-subgroup of $G$ is
$S\cong\widehat{S}/ \langle z \rangle$. 
The subgroup $\widehat{S}$ has a maximal
subgroup $T = \langle y, xyx^{-1}, \dots, x^{\ell-1}yx^{1-\ell} \rangle$, 
which is abelian.

Let $\phi: \widehat{S} \to S$ be the quotient map. We note that two
subgroups $E$ and $F$ in $S$ are conjugate in $G$ if and only if
their inverse images $\phi^{-1}(E)$ and $\phi^{-1}(F)$ are
conjugate in $\widehat{G}$. Consequently, to find the maximal elementary
abelian subgroups of rank $2$ in $S$, it suffices to look
for the subgroups $E$ of order $\ell^{s+2}$ in $\widehat{S}$ that contain
$z$ and have the property that $E/\langle z\rangle$ is elementary
abelian. For the sake of this proof, 
call such a group $Q2$-{\it elementary}.

For our analysis, we identify three subgroups. Let
$a = y^{\ell^{s-1}}$ and let $b$ be the diagonal matrix with
diagonal entries $1, \zeta, \zeta^2, \dots, \zeta^{\ell-1}$.
Notice that $xbx^{-1}b^{-1} = \zeta \cdot I = z^{\ell^{s-1}}$. Let
\[
E_1 = \langle a, xax^{-1}, \dots, x^{\ell-1}ax^{1-\ell}, z \rangle, \qquad
E_2 = \langle x, b, z \rangle, \qquad \text{and} \quad
E_3 = \langle ax, b, z \rangle.
\]
We claim that every $Q2$-elementary subgroup of $\widehat{S}$ is either
conjugate to one of $E_2$ or $E_3$ or is conjugate to a
subgroup of $E_1$. Note that $E_1$ is abelian whereas the
other two are not. Also, every element of order $\ell$ in
$E_2$ has determinant $1$, but this is not true of $E_3$. Hence,
$E_2$ and $E_3$ are not conjugate, and neither is conjugate
to a subgroup of $E_1$.

Note first that any $Q2$-elementary subgroup of $T$ must be
contained in $E_1$ as $E_1$ is a direct product of $\ell$
cyclic subgroups of order $\ell$ and $\langle z \rangle$
is a direct factor. In particular, $E_1/\langle z \rangle$ 
contains all elements of order $\ell$ in $T/\langle z \rangle.$
Suppose that $H$ is a $Q2$-elementary
subgroup that is not in $T$. Then $H$ contains an element of the
form $tx$ for some $t \in T$. By a direct calculation,
we notice that the centralizer
in $T/\langle z \rangle$ of the class of $x$ is a 
direct factor of $T/\langle z \rangle$ 
that is cyclic of order $\ell^s$. It is
generated by the image in $T/\langle z \rangle$ of diagonal matrix $u$ with entries $1, \gamma,
\dots, \gamma^{\ell-1}$.  The subgroup of elements of  order $\ell$ in
this group is generated by $b = u^{\ell^{s-1}}$. So we can assume that
$H = \langle tx, b, z \rangle$.

It remains to find the conjugacy classes.
Suppose that $w \in T$ is diagonal with entries $w_1,\dots, w_\ell$.
Then $wxw^{-1} = vx$ where $v$ has diagonal entries $w_1w_2^{-1},
w_2w_3^{-1}, \dots, w_\ell w_1^{-1}$. In other words, $x$ is conjugate
in $\widehat{S}$ to $vx$ for $v$ any 
diagonal matrix with entries $v_1, \dots,
v_\ell$ satisfying the condition that the product
$v_1 \cdots v_\ell = 1$. It follows that any possible conjugacy
class of $Q2$-elementary subgroups not in $T$
has a representative of the form
$H= \langle a^i x, b, z \rangle$ for $i = 1, \dots, \ell^s-1$. Now, 
$(a^i x)^\ell = z^i$. 
If $i = m\ell$ for some $m\geq 1$, then $v = a^ixz^{-m}$
has the property that $v^\ell = 1$. In this case $v = tx$ where
$t \in T$ has the property that the product of its (diagonal)
entries is $1$. Thus, $v$ is conjugate to $x$ by an element in
$T$, and $H$ is conjugate to $\langle x, b, z \rangle$.

So we are down to the situation that $H = \langle a^ix, b, z \rangle$,
for $i= 0, 1, \dots, \ell-1$. But now notice that $x$ is conjugate
to $x^j$ for $j = 1, \dots, \ell -1$ by a permutation matrix, an
$\ell$-cycle, that centralizes $a$ and normalizes $\langle b, z
\rangle$. It follows that if $i \neq 0$, then $a^ix$ is conjugate
to $a^ix^{-i}$ and $H = \langle a^ix, b, z \rangle$ is conjugate
to $E_3$. This proves the claim.

Recall that $E_1/\langle z \rangle$ 
has $\ell$-rank $\ell\geq3$. It follows 
that $E_1/\langle z\rangle$, $E_2/\langle z\rangle$ and
$E_3/\langle z\rangle$ are in three distinct connected components of
the orbit poset $\CA^{\geq 2}_\ell(G)/G$ of noncyclic 
elementary abelian $\ell$-subgroups and that 
there are no other components containing 
subgroups of rank $2$. In other words, $\TF(G)$ has rank~$3$. 
\end{proof}

We now establish the rank of $\TF(G)$ in 
some specific cases that are useful in Section~\ref{sec:assoc}.

\begin{prop} \label{prop:psl}
Suppose that  $\ell \geq 3$, and either 
$G \cong \PSL_{\ell}(q)$ with $q\equiv1\pmod{\ell}$,
or $G \cong \PSU_{\ell}(q)$ with $q\equiv-1\pmod{\ell}.$
Assume that if $\ell =3$, then $q\equiv1\pmod{9}$ in the first case 
and $q\equiv -1\pmod{9}$ in the second. 
Then $\TF(G)$ has rank $\ell +1$.
\end{prop}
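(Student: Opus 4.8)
The plan is to deduce the result for $G = \PSL_\ell(q)$ (resp.\ $\PSU_\ell(q)$) from the analysis already carried out for $\PGL_\ell(q)$ (resp.\ $\PGU_\ell(q)$) in the proof of Theorem~\ref{thm:lgeq3}, using Proposition~\ref{P:groupexactseq} and a careful examination of which of the three conjugacy classes of maximal elementary abelian $\ell$-subgroups survive the passage to the subgroup $\PSL_\ell(q) = O^{p'}(\PGL_\ell(q))$. We treat only the linear case, the unitary case being $\ell$-locally identical. First I would record the relevant exact sequence: writing $\whg = \GL_\ell(q)$, $G_1 = \SL_\ell(q)$, and using $Z = Z(\whg)$, we have $\PSL_\ell(q) = G_1/(G_1 \cap Z)$ and $\PGL_\ell(q) = \whg/Z$, with $\PSL_\ell(q) \trianglelefteq \PGL_\ell(q)$ of index $\gcd(\ell, q-1) = \ell$. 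So the relevant sequence is $1 \to \PSL_\ell(q) \to \PGL_\ell(q) \to C_\ell \to 1$; here the quotient $K = C_\ell$ has order divisible by $\ell$, so Proposition~\ref{P:groupexactseq} does \emph{not} apply directly and we genuinely must count conjugacy classes in the subgroup.

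The key step is the combinatorial comparison of $\ell$-subgroups of $\PSL_\ell(q)$ and $\PGL_\ell(q)$. Since $\ell \mid q-1$, a Sylow $\ell$-subgroup $S$ of $\PGL_\ell(q)$ sits inside the normalizer of the diagonal torus as in the proof of Theorem~\ref{thm:lgeq3}, and I would use exactly the notation established there: $\whs = \langle x, y\rangle \leq \GL_\ell(q)$, $S = \whs/\langle z\rangle$, and the three $Q2$-elementary subgroups $E_1, E_2, E_3 \leq \whs$. The point is that $\SL_\ell(q) \cap \whs$ has index $\ell$ in $\whs$ (the determinant map $\whs \to \langle \gamma \rangle/\langle \gamma^\ell\rangle \cong C_\ell$ is surjective, since $y$ has determinant $\gamma$), and the Sylow $\ell$-subgroup of $\PSL_\ell(q)$ is $(\SL_\ell(q)\cap \whs)/\langle z' \rangle$ where $z'$ is the appropriate scalar in $\SL_\ell$. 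I would then check: (a) $E_1$ still contains a rank-$\ell$ (hence rank $\geq 3$) elementary abelian subgroup after intersecting with $\SL_\ell$ and passing to the quotient, contributing one connected component that contains no maximal rank-$2$ subgroup; (b) the non-normal maximal rank-$2$ classes $E_2/\langle z\rangle$ and $E_3/\langle z\rangle$: here $E_2 = \langle x, b, z\rangle$ already lies in $\SL_\ell(q)$ (every generator has determinant $1$: $\det x = (-1)^{\ell-1} = 1$ since $\ell$ is odd, $\det b = \zeta^{0+1+\cdots+(\ell-1)} = \zeta^{\binom{\ell}{2}} = 1$, and $\det z = \gamma^\ell$ — here one must be slightly careful, replacing $z$ by a power lying in $\SL_\ell$ and checking the image in $\PSL$ is unchanged), so its class survives; and crucially whether $E_3 = \langle ax, b, z\rangle$, which has elements of nontrivial determinant, still defines a separate conjugacy class in $\PSL_\ell(q)$, or whether it either disappears, or merges with $E_2$, or splits into several $\PSL$-classes. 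The determinant obstruction that separated $E_2$ from $E_3$ in $\PGL_\ell(q)$ is no longer available in $\PSL_\ell(q)$, so one must recount directly: I would compute $N_{\PGL_\ell(q)}(E_2/\langle z\rangle)$ and $N_{\PGL_\ell(q)}(E_3/\langle z\rangle)$ modulo $\PSL_\ell(q)$, i.e.\ decide whether the outer diagonal automorphism of order $\ell$ stabilizes each class, which governs whether a single $\PGL$-class splits into one or $\ell$ classes of $\PSL$. Given that the total count must come out to $\ell+1 = (\ell-1)+2$, the expected outcome is that $E_1/\langle z\rangle$ contributes $1$ component, $E_2$ contributes $1$ class, and the $\PGL$-class of $E_3$ splits into $\ell-1$ distinct $\PSL$-conjugacy classes (indexed by $i = 1, \dots, \ell-1$, i.e.\ the classes of $\langle a^i x, b, z\rangle$ which were fused in $\PGL_\ell(q)$ by the $\ell$-cycle permutation matrix that conjugates $x$ to $x^j$ — but that permutation matrix has determinant $\pm 1$ and one must recheck which fusions remain inside $\PSL_\ell(q)$).

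The main obstacle is exactly this last count: determining precisely how the $\PGL_\ell(q)$-conjugacy classes of maximal rank-$2$ elementary abelian subgroups refine under restriction to $\PSL_\ell(q)$. Concretely, in the proof of Theorem~\ref{thm:lgeq3} the subgroups $\langle a^i x, b, z\rangle$ for $i = 1, \dots, \ell-1$ were shown to be $\PGL$-conjugate to $E_3$ via an $\ell$-cycle permutation matrix $\sigma$ conjugating $x \mapsto x^j$; I need to track $\det\sigma$ and, more importantly, whether the conjugating element normalizes the relevant $\langle z'\rangle$ and lies in $\SL_\ell(q)$ modulo scalars, to see which of these fusions persist in $\PSL_\ell(q)$ and which break, producing new classes. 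The hypothesis "$q \equiv \pm 1 \pmod 9$ when $\ell = 3$" is presumably needed precisely to rule out an edge case (when $\ell=3$ and $q\equiv 1 \pmod 3$ but $q \not\equiv 1 \pmod 9$, the Sylow $3$-subgroup structure degenerates and $E_1$ may cease to have rank $\geq 3$, or the counting changes), so I would isolate that congruence where it is used. Once the refined count $1 + 1 + (\ell-1) = \ell+1$ is verified via Theorem~\ref{thm:poset2} (noting $\rk_\ell(\PSL_\ell(q)) = \ell - 1 > 2$ since $\ell \geq 3$, so the "$+1$" for the higher-rank component applies), the proposition follows.
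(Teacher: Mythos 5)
Your plan frames the problem the right way (count conjugacy classes of maximal rank~$2$ elementary abelian $\ell$-subgroups in $\PSL_\ell(q)$, apply Theorem~\ref{thm:poset2} with the ``$+1$'' for the rank~$\geq 3$ component), but the step you yourself flag as ``the main obstacle'' --- deciding exactly how the $\PGL_\ell(q)$-classes refine into $\PSL_\ell(q)$-classes --- is precisely the mathematical content of the proposition, and your proposal neither resolves it nor points at a mechanism that would. Tracking the determinant of the $\ell$-cycle permutation matrix does not help: for odd $\ell$ that determinant is $1$, so that conjugation already takes place inside $\SL_\ell(q)$ and is not what governs the splitting. What actually happens (and what must be proved) is that the relevant nonabelian $Q2$-elementary subgroups of $\SL_\ell(q)$ fall into exactly $\ell$ classes $E_i=\langle w^ix,b,z\rangle$, $i=0,\dots,\ell-1$, which are all conjugate in $\GL_\ell(q)$ by powers of the diagonal element $y$ but pairwise non-conjugate in $\SL_\ell(q)$; in particular all of them consist of determinant-one elements, so your conjectured pattern ``one class from $E_2$ and $\ell-1$ classes from $E_3$'' does not match the actual refinement. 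The crux, which is missing from your proposal, is a proof of non-fusion: one needs to show that $N_{\wh G}(E_i)=N_H(E_i)\wh Z$ for $\wh G=\GL_\ell(q)$, $H=\SL_\ell(q)$, which the paper achieves by proving that $N_H(E_0)$ already induces the full outer automorphism group $\SL_2(\ell)$ of the extraspecial group $E_0$ --- the nontrivial input being an explicit Vandermonde matrix, rescaled to have determinant $1$, realizing the extra automorphism. Granting that, any $g\in H$ with $gE_ig^{-1}=E_j$ would force $y^{i-j}g\in N_{\wh G}(E_i)$ to have determinant an $\ell$-th power, whereas $\Det(y^{i-j}g)=\gamma^{i-j}$ is not one; this determinant obstruction is what keeps the $\ell$ classes apart. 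Without an argument of this kind your count of classes, and hence the rank $\ell+1$, is unsupported.

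A smaller point: the hypothesis $q\equiv\pm1\pmod 9$ when $\ell=3$ is not about an exotic fusion edge case; it guarantees that the Sylow $\ell$-subgroup is not elementary abelian of order $9$ (i.e.\ that $s\geq 2$ in the notation $|q\mp1|_\ell=\ell^s$), so that the wreath-product/extraspecial structure used above exists at all; when $3\,\|\,q\mp1$ the Sylow $3$-subgroup of $\PSL_3(q)$ (resp.\ $\PSU_3(q)$) is elementary abelian of order $9$ and $\TF(G)$ has rank $1$, as noted in the proof of Theorem~\ref{thm:assoc}.
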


\begin{proof}
The $\ell$-local structures of $\PSL_\ell(q)$ with $\ell$ dividing $q-1$
and $\PSU_\ell(q)$ with $\ell$ dividing $q+1$ are very similar. We
give the proof only in the case that $G = \PSL_\ell(q)$. The proof
in the case of $\PSU_\ell(q)$ follows by the same line of reasoning.

We continue mostly with the notation introduced in the proof of
Theorem~\ref{thm:lgeq3} for $G=A_{\ell-1}(q)$, except that
we let $H = \SL_\ell(q)$ and $G = \PSL_\ell(q) = H/\langle z \rangle$
where $z = \zeta I$ generates the center of $H$ (not the same $z$ as in the   
previous proof). A Sylow $\ell$-subgroup
of $H$ has the form $S = T \rtimes \langle x \rangle$, where
$T$ is the collection of diagonal $\ell$-elements having determinant $1$.
Any element of $S$ that is not in $T$ is a power of
an element of the form $ax$ for some $a \in T$.
We note that the diagonal element $y$ as above, 
with entries $\gamma, 1, \dots, 1$, is not in $H$. 
The subgroup $S$ is generated by $x$ and $w = x^{-1}y^{-1}xy$ which is 
diagonal with entries $\gamma, \gamma^{-1}, 1, \dots, 1$, and $T$ is generated
by the conjugates of $w$ by powers of $x$.  

A $Q2$-elementary subgroup, if
it is not contained in $T$, must have the form
$J_a = \langle ax, b, z \rangle$ for
some $a$ in $T$.  That is, these are the nonabelian subgroups $J$ 
such that $J/\langle z \rangle$ is elementary abelian of rank $2$.
Note that $J_a = J_{a^\prime}$
if and only if $a^{\prime}a^{-1} \in \langle b,z \rangle$.
So there are $\vert T \vert/\ell^2$ such subgroups. A direct
calculation shows that $N_S(J_a)$ has order $\vert S \vert/\ell^4$.
Thus there are exactly $\ell$ $S$-conjugacy classes of such subgroups.
Let $E_i = \langle w^ix, b, z \rangle$, for
$i = 0, \dots, \ell-1$. All of these subgroups are conjugate in 
$\wh G = \GL_\ell(q)$ by some
power of the element $y$. Our purpose is to show, however, that
no two of them are conjugate in $H$. The theorem then follows,
because our observation implies that the classes
$E_i/\langle z\rangle$ for $0 \leq i < \ell$ are distinct conjugacy
classes of maximal elementary abelian $\ell$-subgroups of
$\PSL_\ell(q)$ of rank $2$.
The subgroup $T/\langle z \rangle$
also has a maximal elementary abelian subgroup $E/\langle z \rangle$,
and none of the $E_i$'s is conjugate to a subgroup of $E$ since
the latter is abelian.

Consider the subgroup $N = N_H(E_0)$, the normalizer in $\SL_\ell(q)$ of
$E_0 = \langle x, b, z \rangle$. The subgroup $E_0$ is an extraspecial 
group of order $\ell^3$ and exponent $\ell$. Its outer automorphism group
is isomorphic to $\GL_2(\ell)$ (see the discussion in \cite{Win}).
Because the centralizer of $E_0$ in $H$ is the center of $H$, $N$ is
an extension
\[
\xymatrix{
1 \ar[r] & E_0 \ar[r] & N \ar[r] & U \ar[r] & 1}
\]
where $U$ is isomorphic to a subgroup of $\SL_2(\ell)$ since it must also 
centralize $\langle z \rangle$.

Observe that $E_0$ is a proper subgroup of $N_S(E_0)$. In particular,
there is an element $u$
of $T$ whose class generates the center of $S/\langle b, z \rangle$
that is in $N_S(E_0)$. Hence, $U$ has an element of order $\ell$. Moreover,
$N_H(T)/T$ is isomorphic to the symmetric group on $\ell$ letters. This
group has an $\ell-1$ cycle that normalizes the subgroup generated by the
class of the element $x$. It must also normalize $\langle b,z \rangle$
and $\langle u, b, z \rangle$. Consequently, $U$ contains the subgroup $B$
of upper triangular matrices in $\SL_2(\ell)$. Because $B$ is a maximal
subgroup of $\SL_2(\ell)$, we need only show that $U$ has at least one element
that is not in $B$ to conclude that $U \cong \SL_2(\ell)$.

Let $v$ be the Vandermonde matrix
\[
v = \begin{bmatrix} 1 & 1      & 1           &  \dots  & 1 \\
                    1 & \zeta   & \zeta^2     & \dots   & \zeta^{\ell-1} \\
                    1 & \zeta^2 & \zeta^4     & \dots   & \zeta^{2(\ell-1)} \\
               \vdots &  \vdots & \vdots    &  \dots  &   \vdots     \\
                    1 & \zeta^{\ell-1} & \zeta^{2(\ell-1)} &\dots& \zeta^{(\ell-1)^2}
       \end{bmatrix}
\qquad \text{so that} \qquad
v^2  = \begin{bmatrix} \ell & 0      &  \dots & 0        & 0 \\
                         0  & 0      &  \dots & 0        & \ell \\
                         0  & 0      & \dots & \ell     &  0    \\
             \vdots & \vdots    &  \dots  & &\vdots    \\
                         0 &  \ell  & \dots   & 0       & 0
       \end{bmatrix} .
\]
Note that the columns (and also the rows) are eigenvectors for the
matrix $x$ with corresponding eigenvalues $1, \zeta, \zeta^2,
\dots, \zeta^{\ell-1}$. Thus, we have that $xv = vb$.
The computation of the matrix $v^2$ is straightforward as
each row is orthogonal (under the usual dot product) to all but one
of the columns.

Next we note that the determinant of $v^2$ is 
$\varepsilon\ell^\ell = (\varepsilon\ell)^\ell$ where
$\varepsilon = \pm 1$, the sign depending on the parity of $(\ell-1)/2$. 
Because the group $\bF_q^\times$ is cyclic and $\ell$ is prime to $2$,
the determinant of $v$ is also an $\ell^{th}$-power. That is, there is some
$\mu$ in $\bF_q^\times$ such that $\Det(v) = \mu^\ell$
and $\mu^2 = \varepsilon \ell$. Let $h$ be
the product of $v$ with the scalar matrix $\mu^{-1}I$.
Then $\Det(h) = 1$, $h \in H$ and $xh = hb$.  In addition, $h^2$ has the same
form as $v^2$ except that the nonzero entries 
that are equal to $\ell$ in $v^2$
are replaced by $\varepsilon$ in $h^2$. 
That is, $h^2 = (1/\varepsilon\ell)v^2$. 
So we find that $h^2xh^{-2} = x^{-1}$ by direct calculation.
Also, we have that $h^{-1}xh = b$ and
$h^{-1}bh = x^{-1}$. So $h$ is in $N$ and its class in
$U$, identified in a subgroup of $\SL_2(\ell)$, is the matrix
\[
\begin{bmatrix} 0 & 1 \\ -1 & 0 \end{bmatrix}.
\]
This element is not in the subgroup $B$, and hence we have shown
that $U \cong \SL_2(\ell)$.

Because $N_H(E_0)/E_0$ is the outer automorphism group of $E_0$
we have that $N_{\widehat{G}}(E_0) = N_H(E_0)\widehat{Z}$,
where $\widehat{Z}$ denotes the center of $\widehat{G} = \GL_\ell(q)$.
The same holds if we replace $E_0$ by $E_i$ since they are conjugate
in $\widehat{G}$.  Thus, we have that
if $g \in N_{\widehat{G}}(E_i)$, then the determinant of $g$ is an
$\ell^{th}$ power of some element in $\bF_q^\times$.

Finally, suppose that there is an element $g$ in $H$ such that $gE_ig^{-1}
= E_j$ for $i< j$. We know also that $y^{j-i}E_i y^{i-j} = E_j$.
Therefore, $y^{i-j}g \in N_{\widehat{G}}(E_i)$. However, this is
not possible. The reason is that $\gamma$ is a 
generator of the Sylow $\ell$-subgroup
of the multiplicative group $\bF_q^\times$ and $0 < i-j < \ell$,
the determinant of $y^{i-j}g$, which is equal to
$\gamma^{i-j}$, is not an $\ell^{th}$ power. Hence, if $i \neq j$,
then $E_i$ is not $H$-conjugate to $E_j$ and 
then $E_i/\langle z \rangle$ is not $G$-conjugate to $E_j/\langle z \rangle$.
This proves the proposition.
\end{proof}

\section{Groups associated to finite groups of Lie type for $\ell\geq 3$}
\label{sec:assoc}

In this section we are interested in some of the  groups associated 
to finite groups of Lie type.
Suppose that $G_0 = G_{sc}$ is a finite group of Lie type
arising from a simply connected simple algebraic group $\G$.
If $G_0=\SL_n(q)$ or $\SU_n(q)$, let $G_1=\GL_n(q)$, or $\GU_n(q)$,
respectively. If $\G$ is symplectic or orthogonal, take $G_1$ to be
the conformal group of that type (cf. \cite[page 7,8]{MT} and
\cite[Section 2.7]{GLS}). For example, if $G_0 =\Sp_{2n}(q)$,
then $G_1 =\operatorname{CSp}_{2n}(q)$, the group of all $2n \times 2n$-matrices $X$
with the property that $XfX^{tr} = af$ for some $a \in \bF_q$, where 
$f$ it the matrix of the symplectic form. 
If $\G$ has exceptional type, let $G_1=G_0$.
By an {\em associated group} of
$G_0$ we mean a group $G=H/J$, where $G_0\leq H\leq G_1$ and
$J\leq Z(H)\leq Z(G_1)$ such that $G$ contains the group
$G_0/Z(G_0)$ as a section. For example,
in type $A_{n-1}$, an associated group is a quotient
$G = H/J$ where $\SL_n(q) \leq H \leq \GL_n(q)$ and
$J \leq Z(H) \leq Z(\GL_n(q))$. The simple group
$\PSL_n(q)$ is an example. In any type, a diagram for such groups
has the form
\[
\xymatrix{&&G_1\ar@{-}[d]\\&&HZ(G_1)\ar@{-}[d]\ar@{-}[dl]\\
&H\ar@{-}[d]&G_0Z(G_1)\ar@{-}[dl]\ar@{-}[dr]\\
&G_0Z(H)\ar@{-}[dl]\ar@{-}[dr]&&Z(G_1)\ar@{-}[dl]\\
G_0\ar@{-}[dr]&&Z(H)\ar@{-}[dl]\\
&Z(G_0)}
\]
where the associated group is $G = H/J$ for $J$ some subgroup of $Z(H)$.
Note that $J$ may or may not contain $Z(G_0)$.

Our analysis will entail understanding the structure of $G$, and will benefit substantially from knowing when 
$G$ is isomorphic to a product of groups. 

\begin{lemma} \label{lem:relprime}
In addition to the above notation, assume that $G_0 = [G_1,G_1]$.
Let $G = H/J$ be a section of $G_1$ as above. Write 
$H/[H,H] = U \times V$ where $U$ is a $\vert Z(G_0) \vert$-Hall
subgroup of $G_1/G_0$ and $\vert V \vert$ 
is relatively prime to $\vert Z(G_0) \vert$. Then there exist 
subgroups $H^\prime \leq H$ and $U^\prime \leq Z(H^\prime)$ such that 
the order of both $H^\prime/[H^\prime,H^\prime]$ and $U^\prime$ 
divide $\vert Z(G_0) \vert^n$ for some $n$
and $G \cong H^\prime/U^\prime \times V$.
\end{lemma}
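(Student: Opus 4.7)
My plan is to construct an internal direct-product decomposition $H = H' \times \widetilde{V}$, in which $H' \leq H$ is the full preimage of $U$ under $H \twoheadrightarrow H/[H,H]$, and $\widetilde{V} \leq Z(H)$ is a central subgroup of $\pi'$-order (with $\pi$ the set of primes dividing $|Z(G_0)|$) lifting the Hall $\pi'$-subgroup $V$. Because $J \leq Z(H)$ respects such a decomposition, quotienting by $J$ will then give $G$ a product form $H'/U' \times V$, with $U'$ the $\pi$-part of $J$.

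First I would check that $[H, H] = G_0$: since $G_0 = [G_1, G_1]$ is perfect in all the types relevant here (a few small exceptions can be accommodated separately), one has $G_0 = [G_0, G_0] \leq [H, H] \leq [G_1, G_1] = G_0$, so $H/[H,H] = H/G_0 \leq G_1/G_0$ is abelian with Hall $\pi$-subgroup $U$ and Hall $\pi'$-subgroup $V$. Next, let $\widetilde{V}$ be the Hall $\pi'$-subgroup of $Z(H)$. Then $\widetilde{V} \cap G_0 \leq Z(G_1) \cap G_0 = Z(G_0)$, a $\pi$-group, which forces $\widetilde{V} \cap G_0 = 1$, so $\widetilde{V}$ embeds into $H/G_0$. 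To see the image is all of $V$ it suffices that the cokernel of $Z(G_1) \to G_1/G_0$ be a $\pi$-group; in type $A$ this follows from $\det(\alpha I) = \alpha^n$ together with $\gcd(n, q-1) = |Z(G_0)|$, and analogous arguments using the determinant-like characters of the conformal group cover the other classical types, while in the exceptional types $G_1 = G_0$ makes the claim vacuous. Setting $H'$ to be the preimage of $U$ in $H$, we then have $H'\widetilde{V} = H$ and $H' \cap \widetilde{V} = 1$, and centrality of $\widetilde{V}$ upgrades this to an internal direct product $H = H' \times \widetilde{V}$.

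From $Z(H) = Z(H') \times \widetilde{V}$ and the choice of $\widetilde{V}$ as the full Hall $\pi'$-subgroup of $Z(H)$, the factor $Z(H')$ must be the Hall $\pi$-subgroup of $Z(H)$, hence itself a $\pi$-group. Decomposing $J \leq Z(H)$ along this splitting as $J = J_\pi \times J_{\pi'}$ with $J_\pi \leq Z(H')$ and $J_{\pi'} \leq \widetilde{V}$, I set $U' := J_\pi$. Then $|U'|$ and $|H'/[H',H']| = |U|$ are both $\pi$-numbers, hence divide $|Z(G_0)|^n$ for $n$ sufficiently large, and
\[
G \;=\; H/J \;\cong\; (H' \times \widetilde{V})/(J_\pi \times J_{\pi'}) \;\cong\; (H'/U') \times (\widetilde{V}/J_{\pi'}),
\]
where the second factor is canonically identified with $V$ via the isomorphism $\widetilde{V} \cong V$ constructed above.

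The main obstacle is the surjectivity step in the construction of $\widetilde{V}$: showing that the natural map $Z(H) \to H/G_0$ hits the full Hall $\pi'$-subgroup $V$. This is a structural statement about the isogeny relating $\G_{sc}$, $\G$, and the ambient conformal group $G_1$, and must be verified case by case (or invoked from the literature, e.g.\ \cite{GLS}). Once that surjectivity is in hand, the decomposition $H = H' \times \widetilde{V}$ and the passage to the quotient by $J$ are formal.
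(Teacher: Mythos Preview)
Your approach is essentially the same as the paper's: identify $[H,H]=G_0$, split off the Hall $\pi'$-part $\widetilde V$ of $Z(H)$ as a central direct factor of $H$, and then pass to the quotient by $J$. The one substantive difference is your treatment of the ``main obstacle'': you propose verifying type-by-type that the cokernel of $Z(G_1)\to G_1/G_0$ is a $\pi$-group, whereas the paper handles this uniformly by observing that $G_1/Z(G_1)$ is the finite group of adjoint isogeny type, so $|G_1/(G_0Z(G_1))|=|Z(G_0)|$ by the isogeny-invariance of the order formula. That single line replaces your case analysis. A minor slip: your final identification of $\widetilde V/J_{\pi'}$ with $V$ is not quite correct unless $J_{\pi'}=1$; the second factor is in general only a quotient of $V$. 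The paper's proof contains the same imprecision, and it is harmless for the applications (only the $\pi'$-order of the factor matters).
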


\begin{proof}
Write $G_1/G_0 \cong U_1 \times V_1$ 
and $Z(G_1) \cong U_2 \times V_2$ where 
$ U_i$ is a $\vert Z(G_0) \vert$-Hall subgroup 
and $\vert V_i \vert$ is relatively prime to $\vert Z(G_0) \vert$ 
for $i = 1,2$. Consider the composition $\phi: Z(G_1) \to G_1 \to 
G_1/G_0$. By the construction $V_2 \cap G_0 = \{1\}$ so that $\phi$
is injective on $V_2$. On the other hand, $G_1/Z(G_1)$ is the finite
group arising as the fixed points of the simple algebraic group of adjoint isogeny type and we know that 
$\vert G_1/G_0Z(G_1) \vert = \vert Z(G_0) \vert$ which is relatively
prime to the order of $V_1$. It follows that $V_1 = \phi(V_2)$.


Because $G_0 \leq H$, we have that $G_0 = [H,H]$ and that 
$H$ is determined uniquely by its image in $G_1/G_0$.
Thus $H/G_0$ has the form $\tilde{U} \times \tilde{V}$, 
where $\tilde{U} \leq U_1$ and $\tilde{V} \leq V_1$.
Likewise, $J = \hat{U} \times \hat{V}$ and $Z(H) \cong U^\prime
\times V^\prime$ with 
$\hat{U}, U^\prime \leq U_2$ and $\hat{V}, V^\prime \leq V_2$. 
It follows that $V^\prime/\hat{V} \cong \tilde{V}/\phi(\hat{V}) \cong V$
is a direct factor of $H/J$. The statements about the group $H^\prime$ 
can be deduced from the fact that its composition factors are 
$G_0/Z(G_0)$, $U$ and $U^\prime/\hat{U}$.
\end{proof}

The main aim of the section is to prove the following theorem.

\begin{thm} \label{thm:assoc} Let $G_{0} = \G^F$ be a finite group of Lie
type, where $\G$ is simple and simply connected. Let $G$ be one of
the associated finite groups of $G_{0}$. 
Assume that $\ell \geq 3$ does not divide $p$ and that the $\ell$-rank of $G$
is at least $2$. Then $\TF(G) \cong \Z$ except in the following cases.
\begin{itemize}
\item[(a)] If $G \cong \PSL_{\ell}(q)$ with
  $q\equiv1\pmod{\ell}$ if $\ell >3$,
and with $q\equiv1\pmod9$ if $\ell = 3$, then $\TF(G)$ has rank $\ell +1$.
\item[(b)] If $G \cong \PSU_{\ell}(q)$ with
  $q\equiv-1\pmod{\ell}$ if $\ell >3$,
and with $q\equiv-1\pmod9$ if $\ell = 3$, then $\TF(G)$ has rank $\ell +1$.
\item[(c)] If ${\ell }=3$ and $G \cong \ls3D_{4}(q)$,
then $\TF(G)$ has rank 2.
\end{itemize}
\end{thm}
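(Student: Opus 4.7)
The plan is to translate the problem via Theorem~\ref{thm:poset2} into counting $G$-conjugacy classes of maximal elementary abelian $\ell$-subgroups of rank~$2$, and then reduce the analysis of associated groups to the simply connected case $G_0$ plus a small list of explicit exceptions. The first move is to apply Lemma~\ref{lem:relprime} to write $G \cong H'/U' \times V$, where $V$ has order prime to $|Z(G_0)|$ and $H'/U'$ captures the $|Z(G_0)|$-local extension data relating $G_0$ to $G$. Since $V$ appears as a direct factor, elementary abelian $\ell$-subgroups of $G$ decompose into contributions from $H'/U'$ and the $\ell$-part of $V$; in the relevant cases the $\ell$-rank of $H'/U'$ is large enough that no new maximal rank-$2$ subgroup of $G$ can have nontrivial $V$-projection, so the count reduces to one for $H'/U'$.

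I would then split on whether $\ell \mid |Z(G_0)|$. If $\ell \nmid |Z(G_0)|$, all extensions and quotients relating $G_0$ to $H'/U'$ are of $\ell'$-order, so Proposition~\ref{P:groupexactseq} gives $\TF(H'/U') \cong \TF(G_0)$. Theorem~\ref{thm:lgeq3} applied to the simply connected $G_0$ yields $\TF(G) \cong \Z$, except when $G_0 \cong \ls3D_{4}(q)$ with $\ell = 3$; there $Z(G_0) = 1$, forcing $G = G_0$, which gives case~(c) with $\TF(G)$ of rank~$2$.

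If instead $\ell \mid |Z(G_0)|$, an inspection of the centers of the simply connected simple algebraic groups forces $\G$ to be of type $A_{n-1}$ or ${}^2A_{n-1}$ with $\ell \mid n$ and $\ell \mid q \mp 1$, or of type $E_6$/${}^2E_6$ at $\ell = 3$ with $3 \mid q \mp 1$. For $E_6$ and ${}^2E_6$, the order-polynomial argument used in the proof of Proposition~\ref{prop:oddreduce} shows $G_0$ has $3$-rank at least~$6$; since $|Z(G_0)| = 3$, any central quotient drops the rank by at most one, so $G$ retains $3$-rank at least~$5 > \ell + 1$, and Theorem~\ref{T:norank2}(a) forbids maximal rank-$2$ subgroups, giving $\TF(G) \cong \Z$. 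For type $A_{n-1}$ or ${}^2A_{n-1}$ with $n > \ell$, necessarily $n \geq 2\ell$, and the diagonal maximal torus of $\SL_n(q)$ (respectively $\SU_n(q)$) furnishes $\ell$-rank at least $2\ell-1$; dropping by at most one under central quotients still gives $\ell$-rank at least $2\ell-2 \geq \ell+1$ for $\ell \geq 3$, so Theorem~\ref{T:norank2}(a) again applies.

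The main obstacle is the remaining case of type $A_{\ell-1}$ or ${}^2A_{\ell-1}$ with $n = \ell$. Here associated groups (up to the direct $V$-factor) form a short lattice of central extensions and quotients between $\SL_\ell(q)$ and $\PGL_\ell(q)$, with $\PSL_\ell(q)$ as the distinguished intermediate quotient. The endpoints $\SL_\ell(q)$ and $\PGL_\ell(q)$ are themselves finite groups of Lie type and are covered by Theorem~\ref{thm:lgeq3}, giving $\TF \cong \Z$ for $\SL_\ell(q)$; the quotient $\PSL_\ell(q)$ is treated by Proposition~\ref{prop:psl} and yields the rank~$\ell+1$ of cases~(a) and~(b). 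For each remaining intermediate associated group I would apply Proposition~\ref{P:groupexactseq} together with an analysis of how the image of the diagonal outer automorphism $\GL_\ell(q)/(\SL_\ell(q) \cdot Z(\GL_\ell(q)))$ acts on the Sylow $\ell$-subgroup to match its $\ell$-fusion with one of $\SL_\ell(q)$, $\PSL_\ell(q)$, or $\PGL_\ell(q)$. Finally, the congruence $q \equiv \pm 1 \pmod 9$ for $\ell = 3$ in cases~(a)/(b) is automatic from the hypothesis $\rk_\ell(G) \geq 2$, since otherwise the Sylow $3$-subgroup of $\PSL_3(q)$ would be cyclic of order $3$.
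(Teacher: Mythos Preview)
Your overall strategy matches the paper's: use Lemma~\ref{lem:relprime} to strip off a prime-to-$|Z(G_0)|$ direct factor, split on whether $\ell$ divides $|Z(G_0)|$, and in the type~$A$ case with $n=\ell$ reduce to a short list of explicit groups. There are, however, two genuine problems.

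First, your final sentence is wrong. When $3 \mid q-1$ but $9 \nmid q-1$, the Sylow $3$-subgroup of $\SL_3(q)$ is the extraspecial group $3^{1+2}_+$ of exponent~$3$, and its quotient by the center (a Sylow $3$-subgroup of $\PSL_3(q)$) is elementary abelian of order~$9$, not cyclic of order~$3$. Thus $\rk_3(\PSL_3(q)) = 2$ and the hypothesis of the theorem \emph{is} satisfied; the reason this case falls under $\TF(G) \cong \Z$ rather than under exception~(a) is simply that an abelian Sylow subgroup has a unique maximal elementary abelian subgroup. The congruence $q \equiv 1 \pmod 9$ is therefore not forced by the rank hypothesis, and you need a separate argument for it.

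Second, your treatment of the ``intermediate'' associated groups for type $A_{\ell-1}$ is too vague to count as a proof. Saying you will match the $\ell$-fusion to one of $\SL_\ell(q)$, $\PSL_\ell(q)$, or $\PGL_\ell(q)$ via the diagonal outer automorphism does not establish which match occurs or why. The paper dispatches this with a clean trichotomy on $J$ (after reducing via Lemma~\ref{lem:relprime} so that $|J|$ and $|H/G_0|$ are $\ell$-powers): if $J = \{1\}$ then $G \leq \GL_\ell(q)$ and an eigenvalue argument conjugates every element of order $\ell$ into the diagonal torus; if $\{1\} \neq J \neq Z(H)$ then there is a central element of $H$ whose class in $H/J$ has order $\ell$, together with an independent toral element that is central in a Sylow $\ell$-subgroup, so the center of the Sylow has rank~$2$ and Lemma~\ref{lem:normal3rk} gives $\TF(G) \cong \Z$; and if $J = Z(H)$ then one checks directly that $H/J$ is isomorphic to either $\PGL_\ell(q)$ or $\PSL_\ell(q)$ (similarly in the unitary case). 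This concrete case split is what your sketch is missing.
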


\begin{proof}
The last case (c) was treated in Section \ref{sec:lg3specific}
(see also Theorem \ref{thm:lgeq3}).

Assume that the group has the form $G = H/J$ as in the previous notation
of the section. We prove the theorem for groups of Lie type $B_n$, $C_n$,
$D_n$ and $\ls2D_n$, by noticing that $G_0=G_{sc}$ 
has center that has order either $2$ 
or $4$ (see \cite[Table 24.2]{MT}). 
Consequently, if $\ell$ divides the order of $Z(G) = Z(H)/J$ then $G$
has a direct factor that is a cyclic $\ell$-group. In such a case the 
center of a Sylow $\ell$-subgroup of $G$ has $\ell$-rank at least $2$ and
we are done. On the other hand, if $\ell$ does not divide the 
order of $Z(G)$, then by Lemma \ref{lem:relprime}, a Sylow $\ell$-subgroup
of $G$ is isomorphic to that of $G_0$. These cases have already been 
considered. 

A similar thing happens in types $A_n$ and $\ls2A_n$. That is, if $\ell$
does not divide the order of $Z(G_0)$, then regardless of whether $\ell$
divides $|Z(G)|$, we are done by the same arguments as above. Consequently,
we can assume that $\ell$ divides the order of $Z(G_0)$, requiring that $\ell$ 
divides both $n+1$ and $q-1$ in type $A_n$, and that $\ell$ 
divides both $n+1$ and $q+1$ in type $\ls2A_n$.

For the untwisted type $A_n$, we need to consider the case when $\ell$
divides both $n+1$ and $q-1$. However, if $n+1 > \ell$, the
$\ell$-rank of $G$ is greater than $\ell$, and therefore $G$ cannot
have any maximal elementary abelian $\ell$-subgroup of rank $2$.
So it remains to consider the case $\ell = n+1$ with
$q\equiv1\pmod{\ell}$.
Similarly, in the twisted case $\ls2A_n$, we may assume that
$\ell = n+1$ with $q\equiv-1\pmod{\ell}$. In addition, by Lemma 
\ref{lem:relprime}, we may assume that the orders of $J$ and $H/G_0$ 
are powers of $\ell$. 

If $J = \{1\}$, then $G \leq \GL_\ell(q)$ or $G \leq \GU_\ell(q)$. In
either case, an eigenvalue argument tells us that any element of order
$\ell$ is conjugate to an element of the diagonal torus. Hence, 
we are done in this case, and we may assume that $J\neq \{1\} $. 

If $J \neq Z(H)$, then there exists an element $x$ in $Z(H)$ such 
that $x \notin J$ but $x^\ell \in J$. Also, because $J$ is not trivial, 
there exists an element of order $\ell$ in the diagonal torus in $H$
whose class in $H/J$ is central in a Sylow $\ell$-subgroup. Thus, in 
such a case, the center of a Sylow $\ell$-subgroup of $H/J$ has 
$\ell$-rank $2$ and we are done by Lemma \ref{lem:normal3rk}. So assume
that $J = Z(H)$. 

In the untwisted situation, we are down to two possibilities. First if
$H/G_0$ is a Sylow $\ell$-subgroup of $G_1/G_0$ then $J$ is a 
Sylow $\ell$-subgroup of $Z(G_1)$. In such a case $G = H/J \cong \PGL_\ell(q)$. 
This case has been treated in Section~\ref{sec:lg3specific}. In the other case,
that $J < Z(G_1)$, we have that $G \cong \PSL_\ell(q)$ and $\ell$ 
divides $q-1$. 
Similarly, in the twisted case we are down to the situation that 
$G \cong \PSU_\ell(q)$ and $\ell$ divides $q+1$.

Observe that if $\ell = 3$, with 3 dividing $q-1$ and 9 not dividing
$q-1$, then a Sylow $3$-subgroup of $\PSL_3(q)$ is elementary abelian of
order $9$. The same holds for $\PSU_3(q)$ if $3$ divides $q+1$ and
$9$ does not divide $q+1$. Hence, $\TF(G)$ has rank $1$ in both of these
cases.  Thus, it remains to
calculate the ranks of $\TF(G)$ in the cases (a) and (b) of
the theorem. This is accomplished in Proposition \ref{prop:psl}.
\end{proof}


\section{When ${\mathbb G}$ is simple, $2 = \ell \neq p$}
\label{sec:le2generic}

The goal of this section is to establish Theorems~\ref{thm:l=2} and~\ref{thm:char2}. Some results
of this section will also be used in Section  \ref{sec:reductive}.

\begin{thm}\label{thm:l=2}
Let $G$ be a finite group of Lie type (see
Definition~\ref{def:lietype}) with the ambient group $\G$ a simple
algebraic group. Suppose $\ell=2 \neq p$ and that $TF(G)$ has rank
greater than one. Then  $G$ has nonabelian dihedral Sylow
$2$-subgroups, $G \cong \PGL_{2}(q) \cong \PGU_{2}(q)$ for $q$ odd, and $TF(G)
\cong \Z \oplus \Z$
\end{thm}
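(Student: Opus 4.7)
The plan is to apply the sectional $2$-rank bound of Theorem~\ref{T:norank2}(b) to reduce to small Lie rank cases, and then analyze these directly. If the sectional $2$-rank of $G$ is at least $5$, then by Theorem~\ref{T:norank2}(b) and Lemma~\ref{lem:normal3rk}(a), $G$ has no maximal elementary abelian $2$-subgroup of rank $2$, and Theorem~\ref{thm:poset2} gives that $\TF(G)$ has rank at most $1$. Thus we may assume the sectional $2$-rank is at most $4$. Since $\G$ is simple and $p$ is odd, an $F$-stable (split) maximal torus of $\G$ yields an elementary abelian $2$-subgroup of $G$ whose rank is essentially the Lie rank of $\G$; together with Weyl group involutions this forces the Lie rank of $\G$ to be small, restricting $\G$ to a short list of possible root systems and isogeny types.

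For each of the remaining types of Lie rank at least $2$ (the small cases of $A_n$, $\ls2A_n$, $B_n$, $C_n$, $D_n$, $\ls2D_n$, $G_2$, $F_4$, $\ls3D_4$, and the small $E$-types where not already excluded), the approach is to exhibit inside each Sylow $2$-subgroup $S$ of $G$ an elementary abelian $2$-subgroup of rank at least $3$ centralizing $Z(S)$. Combined with $\rk_2(G) \geq 3$, Lemma~\ref{lem:normal3rk}(c) then forces $G$ to have no maximal rank-$2$ elementary abelian subgroup, and so $\TF(G) \cong \Z$ by Theorem~\ref{thm:poset2}. The constructions combine diagonal involutions from an appropriate $F$-stable maximal torus with Weyl group involutions, and are isogeny-specific.

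This leaves the rank-$1$ type $A_1$, which admits two isogeny types. If $\G^F \cong \SL_2(q) \cong \SU_2(q)$, the Sylow $2$-subgroup is generalized quaternion of $2$-rank $1$, so $\TF(G) = 0$. If $\G^F \cong \PGL_2(q) \cong \PGU_2(q)$, then a Sylow $2$-subgroup $S$ is dihedral of order $2^{a+1}$, where $2^a = \max((q-1)_2, (q+1)_2) \geq 4$ for $q$ odd. A direct count inside $S = \langle r, s\rangle$ reveals exactly two $S$-conjugacy classes of Klein four-subgroups, corresponding to the two $\langle s \rangle$-orbits of noncentral involutions $s^i r$. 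These do not fuse in $G$, since a Klein four of the first type contains the unique involution of a split maximal torus of $G$, while one of the second type contains the unique involution of a non-split maximal torus, and these two torus involutions are not $G$-conjugate. Hence $\TF(\PGL_2(q)) \cong \Z \oplus \Z$, completing the proof.

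The main obstacle is the second step: the rank-$3$ elementary abelian constructions must be carried out uniformly and isogeny-by-isogeny, especially when $|Z(\G_{sc})^F|$ is divisible by $2$ (i.e., in types $A_n$, $B_n$, $C_n$, $D_n$, $E_7$), since then passage between isogeny types can alter both the $2$-rank and the center of a Sylow $2$-subgroup, requiring isogeny-specific treatments rather than a single uniform argument.
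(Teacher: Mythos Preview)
Your overall architecture matches the paper's: reduce via the sectional $2$-rank bound (Theorem~\ref{T:norank2}(b)) to a finite list of small cases, then analyze those individually, with $\PGL_2(q)$ surviving as the only exception. The paper sharpens your reduction step by invoking the Gorenstein--Harada classification of simple groups of sectional $2$-rank at most $4$ directly, which gives an explicit list of types (Theorem~\ref{thm:largerank}) rather than an ad hoc bound on the Lie rank; but that is a matter of efficiency, not correctness.

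The real gap is in your middle step. You propose to exhibit, for each surviving type of Lie rank $\geq 2$, a rank-$3$ elementary abelian $2$-subgroup of $S$ and then apply Lemma~\ref{lem:normal3rk}(c). But that lemma requires $Z(S)$ to be \emph{noncyclic}, and for most of the groups on the residual list this fails: for instance $\Sp_4(q)$, $G_2(q)$, and $\ls3D_4(q)$ all have Sylow $2$-subgroups with cyclic center of order $2$, and the same occurs in several of the $A_3$, $\ls2A_3$, $B_2$ isogeny types. Merely producing one rank-$3$ elementary abelian subgroup (which automatically centralizes $Z(S)$, so that clause adds nothing) does not rule out maximal rank-$2$ elementary abelian subgroups elsewhere in $S$. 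What the paper actually proves in these cases is the much stronger statement that the centralizer of \emph{every} involution in $S$ has $2$-rank at least $3$; this is done by explicit matrix calculations for types $A_3$, $\ls2A_3$, $B_2$ and by quoting Fong--Milgram and Fong--Wong for $G_2$ and $\ls3D_4$. Without that involution-by-involution analysis (or an argument that $Z(S)$ is noncyclic, which is simply false here), your step does not go through.

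Your treatment of the $A_1$ case is fine in spirit; the paper observes more briefly that any group with nonabelian dihedral Sylow $2$-subgroup has $\TF(G)\cong\Z\oplus\Z$, since the two $S$-classes of Klein fours can never fuse in $G$.
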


We also calculate the ranks of $TF(G)$ when $G$ is 
one of the associated groups in the case that $\ell = 2$
is not the defining characteristic of the group. 
The notion of an associated group was introduced in 
Section \ref{sec:assoc}. We adopt the notation used at the
beginning of Section \ref{sec:assoc}. In particular,
$G_1$ is one of the general linear or conformal group such as
$\GL_n(q)$, $\GU_n(q)$ or $\operatorname{CSp}_n(q)$ and $G_0=G_{sc}$.
The group $G =H/J$ is a section of $G_1$ such
that $G_0 \leq H \leq G_1$ and $J \leq Z(H)$.

The groups of endotrivial modules for the associated groups of type 
$A_n$ are determined in the paper \cite{CMN4}. Our aim in this section
is to take a more conceptual and less technical approach. 
For this reason some arguments from \cite{CMN4} are included here. 
In particular, exceptional cases occur when $G_0 \cong \SL_2(q)$,
and some additional explanation is provided.

\begin{lemma} \label{lem:sl2} ({\it cf.} \cite[Lemma 10.1]{CMN4})
Suppose that $G = H/J$ is a group associated to $\SL_2(q)$. That is,
$\SL_2(q) \leq H \leq \GL_2(q)$ and $J \leq Z(H)$.
Write $Z(H)/J = UV$ where $U$ is a $2$-group and $V$ has odd order, and
let $W$ be the kernel of the quotient followed by  projection $Z(H) \to Z(H)/J\to V$.
Then $H/J \cong H/W \times V$ and $\TF(G)\cong \TF(H/W)$. Moreover, if
$U = \{1\}$, meaning that $Z(H)/J$ has odd order, 
then $H/W$ is isomorphic to either $\PSL_2(q)$ or
$\PGL_2(q)$. An analogous situation occurs in the case that $G$ is
an associated group of $\SU_2(q)$.
\end{lemma}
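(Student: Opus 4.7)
The plan is to reduce the lemma to showing that $V$, viewed as a subgroup of $Z(H/J)$, splits off as a direct factor of $H/J$. Once we have $H/J\cong N\times V$ for some $N\leq H/J$, the second isomorphism theorem identifies $N\cong (H/J)/V = H/W$, yielding the claimed decomposition $H/J\cong H/W\times V$. The equality $\TF(H/J)\cong\TF(H/W)$ then follows from Theorem~\ref{thm:poset2}: since $|V|$ is odd and $\ell=2$, the projection $H/J\twoheadrightarrow H/W$ restricts to an isomorphism on any Sylow $2$-subgroup and respects conjugacy, inducing an isomorphism of the orbit posets of noncyclic elementary abelian $2$-subgroups.

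To produce the splitting, I would construct a retraction $\phi\colon H/J\to V$ and take $N=\ker\phi$. Since $V$ is central, it suffices to build a homomorphism $H/J\to V$ whose restriction to $V$ is the identity, and this can be done by factoring through the abelianization $(H/J)^{\mathrm{ab}}$. For $q\geq 4$ one has $[H,H]=\SL_2(q)$ (because $[\SL_2(q),\SL_2(q)]=\SL_2(q)$ and $\GL_2(q)/\SL_2(q)$ is abelian), so $(H/J)^{\mathrm{ab}}$ is a cyclic quotient of $H/\SL_2(q)\hookrightarrow \F_q^\times$. The key step is the injectivity of the composition $V\hookrightarrow H/J\twoheadrightarrow (H/J)^{\mathrm{ab}}$: if $aI\cdot J\in V$ lies in $\SL_2(q)\cdot J/J$, then writing $aI=s\cdot(bI)$ with $s\in\SL_2(q)$ and $bI\in J$ and comparing determinants gives $a^2=b^2$, so $a=\pm b$, which forces the class of $aI$ in $Z(H)/J$ to have order at most $2$, and hence to be trivial in the odd-order subgroup $V$. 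Because $(H/J)^{\mathrm{ab}}$ is cyclic and $V$ injects as an odd-order subgroup, $V$ is a direct summand of $(H/J)^{\mathrm{ab}}$; the projection onto $V$ composed with the abelianization map provides the desired $\phi$. The few small-$q$ cases where $[H,H]\subsetneq\SL_2(q)$ (essentially $q=3$) can be handled by direct inspection.

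For the final assertion, if $U=\{1\}$ then $V=Z(H)/J$ and $W=Z(H)$, so $H/W=H/Z(H)$. Since $Z(H)=H\cap Z(\GL_2(q))$, the second isomorphism theorem identifies $H/Z(H)$ with $H\cdot Z(\GL_2(q))/Z(\GL_2(q))$, a subgroup of $\PGL_2(q)$ containing the image $\PSL_2(q)$ of $\SL_2(q)$; since $[\PGL_2(q):\PSL_2(q)]=2$, this subgroup must equal one of the two. The $\SU_2(q)$ analogue runs identically, with $\GU_2(q)$, $\PGU_2(q)$, $\PSU_2(q)$ replacing the linear groups and the $\GU_2$-determinant taking the place of the $\GL_2$-determinant. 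The main obstacle is establishing the injectivity of $V\hookrightarrow(H/J)^{\mathrm{ab}}$; once that is in hand, cyclicity of the abelianization makes the rest formal.
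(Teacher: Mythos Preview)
Your proposal is correct and follows essentially the same idea as the paper's proof: both arguments hinge on the fact that the determinant restricted to scalar matrices is the squaring map $\zeta\mapsto\zeta^2$, which is bijective on odd-order subgroups, and this is precisely what yields the injectivity of $V\hookrightarrow(H/J)^{\mathrm{ab}}$ you isolate as the key step. The paper states this more tersely, without passing through the abelianization explicitly, but the content is the same.

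For the final assertion the routes differ slightly: the paper invokes Jordan--H\"older and compares orders, whereas you identify $H/Z(H)$ directly with a subgroup of $\PGL_2(q)$ containing $\PSL_2(q)$ and use that the index is $2$. Your argument here is a bit more transparent and avoids the order bookkeeping. Also note that since $q$ is odd in this section, the only small case is $q=3$, where $Z(H)\leq\{\pm I\}$ has order at most $2$, so $V=\{1\}$ and nothing needs to be checked.
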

\begin{proof}
The proofs are similar in both types. Thus we only prove the
statement for $G$ associated to $\SL_2(q)$. 
We see that $Z(H)$ is a subgroup of the group of nonzero scalar matrices.
If $\psi: \bF_q^\times \to \GL_2(q)$ is
the map that takes $\zeta$ to $\zeta Id$,
then its composition with the determinant map is the squaring operation.
Hence, it is an isomorphism on odd order 
subgroups of $\bF_q^\times$. The first
statement follows by this argument. The second statement is a consequence
of the Jordan-H\"older Theorem and the observation that $H/W$ has the same
order as $\PSL_2(q)$ except if $H= \GL_2(q)$ in which
case $H/W\cong \PGL_2(q)$.
\end{proof}

Then our main theorem to address the associated groups is the following. 

\begin{thm} \label{thm:char2} Let $G \cong H/J$ be
an associated group of a finite group of Lie type as defined above
with $q$ odd, and let $\ell=2$.
Then $\TF(G) \cong \Z$ is cyclic except in the following cases.
\begin{itemize}
\item[(a)] $G = \SL_{2}(q)\cong\SU_{2}(q)$. 
\item[(b)] $G = \PSL_{2}(q) \times C \cong \PSU_{2}(q) \times C$
with $q \equiv\pm1\pmod8$ and $C$ a cyclic group of odd order. (See Lemma
\ref{lem:sl2}.)
\item[(c)] $G = \PGL_{2}(q) \times C \cong\PGU_{2}(q) \times C$,
where  $C$ is a cyclic group of odd order.
\end{itemize}
In case (a), a Sylow $2$-subgroup of $G$ is quaternion and $TF(G)=\{0\}$.
In cases (b) and (c), $Z(H)/J$ has odd order, a Sylow $2$-subgroup of
$G$ is (nonabelian) dihedral and $\TF(G) \cong \Z \oplus \Z$.
\end{thm}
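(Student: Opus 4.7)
The plan is to reduce the problem, via Theorem~\ref{thm:poset2}, to counting conjugacy classes of maximal elementary abelian $2$-subgroups of rank $2$ in $G$, and then combine the reduction machinery of Lemma~\ref{lem:relprime} with MacWilliams' Four Generator Theorem (Theorem~\ref{T:norank2}(b)) to eliminate all but a small list of low Lie-rank cases. First I would apply Lemma~\ref{lem:relprime} to $G=H/J$ to decompose $G\cong H'/U'\times V$ with $V$ of odd order; since $V$ has odd order, $\TF(G)\cong\TF(H'/U')$, and this accounts for the odd-order cyclic direct factor $C$ appearing in cases (b) and (c). So one may assume throughout that $Z(H)/J$ is a $2$-group.

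Next, assuming $\G$ is a simple algebraic group of Lie rank $r\geq 2$, I would verify that the sectional $2$-rank of $G$ is at least $5$, so that MacWilliams' theorem forbids maximal rank $2$ elementary abelian $2$-subgroups and Theorem~\ref{thm:poset2} gives $\TF(G)\cong\Z$. For $q$ odd, the diagonal maximal torus of $G_1$ contains $(C_2)^r$, already giving sectional $2$-rank at least $5$ when $r\geq 5$; for $r=2,3,4$, one adds contributions from the $2$-torsion in the Weyl group normalizer of the torus to push the bound past $4$. The rank-$2$ exceptional types --- namely $A_2$, $\ls2A_2$, $B_2=C_2$, $G_2$, and $\ls2G_2$ --- require individual inspection, which can be carried out using the Sylow $2$-subgroup descriptions in \cite{GL}.

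Third, I would treat the remaining case $\G$ of type $A_1$ (equivalently $\ls2A_1$, since $\SL_2(q)\cong\SU_2(q)$). By Lemma~\ref{lem:sl2}, after splitting off the odd-order direct factor, the possibilities for $G$ are exhausted by $\SL_2(q)$, $\PSL_2(q)$, $\PGL_2(q)$, and $2$-power central quotients sitting between $\SL_2(q)$ and $\GL_2(q)$. The Sylow $2$-subgroup of $\SL_2(q)$ is generalized quaternion, hence of $2$-rank $1$, giving $\TF=0$ (case (a)). The Sylow $2$-subgroup of $\PGL_2(q)$ is dihedral of order $|q^2-1|_2\geq 8$ for $q$ odd, with two conjugacy classes of Klein four subgroups, giving $\TF$ of rank $2$ (case (c)). The Sylow $2$-subgroup of $\PSL_2(q)$ is dihedral of order $|q^2-1|_2/2$: when $q\equiv\pm3\pmod 8$ it equals $V_4$, yielding a unique conjugacy class of maximal elementary abelian $2$-subgroups and $\TF\cong\Z$, while when $q\equiv\pm1\pmod 8$ it is dihedral of order $\geq 8$ with two non-conjugate Klein four subgroups, giving $\TF$ of rank $2$ (case (b)). The remaining $2$-power central quotients either coincide with one of the above or inherit their Sylow structure from $\SL_2(q)$ and fall under case (a).

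The main obstacle will be the uniform sectional $2$-rank bound in the second step: the passage from $G_1$ to an associated quotient $G=H/J$ may a priori decrease the sectional $2$-rank, so one must verify for every Lie type and every admissible $J$ that sufficient $2$-torsion survives. The small-rank exceptional checks and the $A_1$ fusion arguments are essentially mechanical by comparison, the latter reducing to the observation that two Klein four subgroups inside a dihedral Sylow of $\PSL_2(q)$ or $\PGL_2(q)$ are distinguished either by the $\bF_q$-conjugacy type of the involutions they contain or by the action of a torus normalizer modulo the torus.
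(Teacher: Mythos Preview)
Your overall architecture --- sectional $2$-rank bounds to kill the generic case, then a short low-rank residue --- is the same as the paper's, but the second step has a real gap. The claim that every associated group of Lie rank $\geq 3$ has sectional $2$-rank at least $5$ is false: $\PSL_4(q)$ with $q\not\equiv 1\pmod 8$ is itself an associated group of type $A_3$ and appears on the Gorenstein--Harada list of simple groups of sectional $2$-rank at most $4$, so MacWilliams' theorem (Theorem~\ref{T:norank2}(b)) simply does not apply. The same happens for $\PSL_5(q)$ with $q\equiv -1\pmod 4$, for $\PSU_4(q)$ and $\PSU_5(q)$ under the dual congruences, and for $\ls3D_4(q)$; your torus-plus-Weyl-normalizer count does not reach $5$ in these cases and cannot be patched uniformly. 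The paper uses the Gorenstein--Harada classification (Theorem~\ref{thm:largerank}) precisely to isolate this residual list, and then eliminates each surviving type by a different mechanism: explicit matrix calculations showing that every involution centralizer has $2$-rank $\geq 3$ for $A_3$, $\ls2A_3$, $B_2$; the Fong--Milgram/Fong--Wong structure results for $G_2$ and $\ls3D_4$; and a reference to \cite{CMN4} for $A_2$, $\ls2A_2$, $A_4$, $\ls2A_4$. None of these follow from a sectional-rank bound.

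Your $A_1$ analysis also leaves a case uncovered. After stripping the odd factor you still face quotients $H/J$ with $Z(H)/J$ a nontrivial $2$-group, and for $q\equiv 1\pmod 4$ these do \emph{not} ``coincide with'' $\SL_2$, $\PSL_2$ or $\PGL_2$, nor do they inherit a quaternion Sylow from $\SL_2(q)$. The paper disposes of them by exhibiting two independent central involutions in a Sylow $2$-subgroup --- one descending from $Z(H)$, the other the image of $\operatorname{diag}(1,-1)$ --- and then invoking Lemma~\ref{lem:normal3rk} to get $\TF\cong\Z$. You need this (or an equivalent) argument; Lemma~\ref{lem:sl2} alone only settles the case where $Z(H)/J$ has odd order.
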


In the proof, we first show that the theorem holds for groups of
large Lie rank.
The groups of small Lie rank are considered on a
case by case inspection. The main reduction theorem is taken from 
\cite{GH}.

\begin{thm} \label{thm:largerank} Let $\wh G = \G^F$ be a finite 
group of Lie type in odd
characteristic, with ${\mathbb G}$ simple and simply
connected, and set $\ell =2$. 
Then $\TF({G})\cong {\mathbb Z}$, for ${G}$ any
associated group to $\wh G$, as defined above, provided that $\wh G$ is not one of
the following types.
\begin{itemize}
    \item[{}]
    \begin{AutoMultiColItemize}
\item[(a)] $\,\,A_1(q)$, $A_2(q)$, $\ls2A_2(q)$,
\item[(b)] $\,\,A_3(q)$ for  $q \not\equiv 1\pmod8$,
\item[(c)] $\,\,A_4(q)$  for $q \equiv -1\pmod4$,
\item[(d)] $\!\!\ls2A_3(q)$  for $q \not\equiv 7\pmod8$,
\item[(e)] $\!\!\ls2A_4(q)$  for $q \equiv 1\pmod4$,
\item[(f)] $\,\,B_2(q)$,   
\item[(g)] $\!\!\ls3D_4(q)$,
\item[(h)] $\,\,G_2(q)$, or $\ls2G_2(q).$
    \end{AutoMultiColItemize}
\end{itemize}
\end{thm}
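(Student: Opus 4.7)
The plan is to reduce Theorem~\ref{thm:largerank} to the Four Generator Theorem of MacWilliams (Theorem~\ref{T:norank2}(b)) combined with the classification, due to Gorenstein and Harada \cite{GH}, of finite groups of sectional $2$-rank at most $4$. By Theorem~\ref{thm:poset2} it suffices to show that $G$ has no maximal elementary abelian $2$-subgroup of rank $2$, and this follows from Theorem~\ref{T:norank2}(b) together with Lemma~\ref{lem:normal3rk}(a) as soon as the sectional $2$-rank of $G$ is at least $5$. The whole problem therefore reduces to a uniform lower bound on the sectional $2$-rank of every associated group $G$ of $\wh G$.

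To pass from $G$ to $\wh G = G_{sc}$, I exploit the fact that $G = H/J$ with $G_0 \leq H \leq G_1$ and $J \leq Z(H) \leq Z(G_1)$, where both $G_1/G_0$ and $Z(G_1)$ have $2$-part of order bounded by a small constant depending only on the root system of $\G$ (trivial in exceptional type, and at most $4$ in classical type). Consequently the sectional $2$-ranks of $\wh G$, $H$, and $G$ differ by at most this bounded amount, and the task becomes one of identifying those simply connected $\wh G = \G^F$ whose sectional $2$-rank is small, with some finite slack to account for the passage to an associated group.

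The sectional $2$-rank of $\wh G$ can be computed from the Sylow $2$-structure of finite groups of Lie type (see e.g.\ \cite[Section~4.10]{GLS}), which in odd characteristic is controlled by the $2$-part of the cyclotomic factors $\Phi_1(q)$ and $\Phi_2(q)$ appearing in the order polynomial of $\G$, together with the contribution of the Weyl group. Combining this with the Gorenstein--Harada classification \cite{GH}, the simply connected groups $\wh G$ of Lie type whose sectional $2$-rank, even after allowing the bounded slack above, remains at most $4$ are precisely those listed in (a)--(h). For every other $\wh G$ the sectional $2$-rank of any associated $G$ is at least $5$, and the reduction yields $\TF(G) \cong \Z$.

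The main obstacle will be the careful case analysis required for the borderline classical cases in items (b)--(e), where the sectional $2$-rank depends sensitively both on the congruence of $q$ modulo small powers of $2$ and on the isogeny type of $\G$; this is precisely why the hypotheses on $q$ appear in those items. In these cases one must verify that the passage from $\wh G$ to an associated group can neither destroy a normal elementary abelian $2$-subgroup of rank $3$ already present in $\wh G$ nor create a new maximal rank-$2$ elementary abelian subgroup in $G$, and this is what ultimately pins down the list (a)--(h).
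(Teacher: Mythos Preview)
Your overall strategy is correct and is the same as the paper's: invoke MacWilliams (Theorem~\ref{T:norank2}(b)) to reduce to a sectional $2$-rank bound, then read the answer off the Gorenstein--Harada classification \cite{GH}.

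However, you overcomplicate the passage from the associated group $G$ to $\wh G$. No ``slack'' argument is needed. By the very definition of an associated group, $G=H/J$ contains $\wh G/Z(\wh G)=G_0/Z(G_0)$ as a \emph{section}, and sectional $2$-rank is monotone under taking sections. Hence the sectional $2$-rank of $G$ is at least that of the finite simple group $\wh G/Z(\wh G)$. The Gorenstein--Harada Main Theorem lists exactly those finite simple groups with sectional $2$-rank at most $4$; intersecting with simple groups of Lie type in odd characteristic (and accounting for exceptional isomorphisms) produces precisely the list (a)--(h). That is the whole proof: for $\wh G$ not on this list, $\wh G/Z(\wh G)$ has sectional $2$-rank $\geq 5$, hence so does $G$, and Theorem~\ref{T:norank2}(b) gives $\TF(G)\cong\Z$.

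Your final paragraph about a ``careful case analysis for the borderline classical cases (b)--(e)'' is misplaced. No such analysis belongs in the proof of Theorem~\ref{thm:largerank}; the congruence conditions in (b)--(e) are inherited verbatim from \cite{GH}. The case-by-case work you have in mind is what the paper does \emph{after} this theorem, in the proof of Theorems~\ref{thm:l=2} and~\ref{thm:char2}, to handle the excluded groups on the list.
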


\begin{proof} 
 Recall that by Tits' theorem \cite[Theorem~24.17]{MT}  $\wh G/Z(\wh G)$ is
  simple, except in a few cases which are among the cases excluded above.
  In  \cite[Main Theorem]{GH}, all finite simple groups
  having sectional $2$-rank at most $4$ are listed. 
If the finite simple group associated to $\wh G$ is not on the above
list, then $G$ has sectional $2$-rank greater than $4$.
(See \cite[Section~3.5]{atlas} or \cite[Theorem~2.2.10]{GLS} for
a list of isomorphisms between finite groups of Lie type.)
So ${G}$ has no maximal elementary abelian
 $2$-subgroups of rank $2$, by Theorem~\ref{T:norank2}(b) as desired.
\end{proof}

We may now complete the proofs of the main theorems of this section.
For the proof, recall that
if $G \cong A \times B$, with $B$ of order prime to 
$\ell$, then $\TF(G) \cong \TF(A)$,  by Proposition \ref{P:groupexactseq}.

\begin{proof}[Proof of Theorems \ref{thm:l=2} and \ref{thm:char2}] By Theorem
\ref{thm:largerank}, we need only deal with 
the groups listed. 
The Sylow $2$-subgroups of finite groups of Lie type are known to be
cyclic only when $G$ is associated to a finite group of Lie type
$A_1(2)$. The groups $\SL_2(q) \cong \SU_2(q)$ have quaternion Sylow 
$2$-subgroups, and hence $\TF(G) \cong \{0\}$ in those cases. 

Recall that for any finite group $G$ with
(nonabelian) dihedral Sylow $2$-subgroup we have
$\TF(G)\cong\Z\oplus\Z$ as it is not possible for
the two $S$-conjugacy classes of elementary abelian subgroups
of order $4$ in $S$
to fuse in $G$ (cf.\ \cite[Section 3.7]{Mazza}).
The Sylow $2$-subgroups of the groups in Theorem~\ref{thm:char2}(b)
are nonabelian dihedral. Note that if $q \equiv\pm3\pmod8$ then the
Sylow $2$-subgroups of $\PSL_2(q)$ are elementary abelian of order $4$, and
$\TF(\PSL_2(q))\cong \Z$. It is easily verified that 
the Sylow $2$-subgroups of $\PGL_2(q) \cong \PGU_2(q)$ are dihedral 
and not abelian. So $\TF(G) \cong \Z \oplus \Z$ in this case. 

An eigenvalue argument tells us that any involution in $H$ for either
$\SL_2(q) \leq H \leq \GL_2(q)$ or 
$\SU_2(q) \leq H \leq \GU_2(q)$ 
is conjugate to a diagonal matrix. In the unitary case, note that the 
eigenspaces of an involution are orthogonal to each other, so that 
we can construct a change of basis matrix that is unitary. 
Hence, $\TF(G) \cong \Z$ if
$J$ has odd order.  Therefore, for the proof for groups
of type $A_1$, we need only consider quotients ${G} = H/J$
where $J$ has even order.

Note that $\GL_2(q)$ is not isomorphic to $\GU_2(q)$. However, 
arguments for these cases are almost identical. 
That is, we can find 
$q'$ with $q'\equiv -q\pmod4$ such that $\SL_2(q')$ or
$\GL_2(q')$ have isomorphic Sylow $2$-subgroups to those of $\SU_2(q)$
or $\GU_2(q)$, respectively
(cf. \cite[Section 1]{CF}). So we prove only the linear case. 

If $q \equiv 3 \pmod 4$, then $4$ does not divide the order of $Z(\GL_2(q))$.
By our assumptions,  $Z(H)/J$ has odd order and by Lemma \ref{lem:sl2} 
we are done.
So we may assume that $q \equiv 1 \pmod 4$ and that $Z(H)/J$ has even order. 
Then there is an element 
$z$ in $Z(H)$ that represents a nontrivial involution in $H/J$. In addition,
the diagonal matrix with entries $1$ and $-1$ is an involution whose 
image in $H/J$ is central in a Sylow $2$-subgroup and distinct from
the image of $z$. Thus the center of a Sylow $2$-subgroup of $H/J$ has 
$2$-rank two and $\TF(H/J) \cong \Z$ by Lemma \ref{lem:normal3rk}.  

\smallskip
\noindent 
{\bf Types $A_2$, $A_4$, $\ls2A_2$ and $\ls2A_4$.}  
The proofs that $\TF(G) \cong \Z$ for groups types $A_2$ and $A_4$
are given in \cite[Sections 6 and 9]{CMN4}. The structure of the 
Sylow $2$-subgroups are very similar for the twisted and untwisted 
cases \cite{CF}. Hence, we leave the proofs of the twisted cases, 
$\ls2A_2$ and $\ls2A_4$, to the reader. We note that centers for 
all finite groups $G_{sc}$ of these types have odd order. Consequently, 
by the argument in
the proof of Lemma \ref{lem:sl2}, the Sylow $2$-subgroup of
$Z(H)/J$ of these types is a direct factor, 
which can be assumed to be trivial for the purposes of the proof.

\smallskip
\noindent 
{\bf Types $A_3$, $\ls2A_3$ and $B_2$.}
We prove the results only for groups of type $A_3$ and $B_2$, because 
the proofs for groups of type $\ls2A_3$ are very similar to those of 
type $A_3$ (in the $\ls2A_3$ case, we take the matrix of the hermitian
form to be the identity matrix). 
Following the notation introduced at the beginning of
Section~\ref{sec:le2generic}, let $G_0$ be $\SL_4(q)$ or 
$\Sp_4(q) \cong \Spin_{5}(q)$ in type $A_3$ or $B_2$, respectively. 
Let $G_1 = \GL_4(q)$ in the first case and  
$G_1 = \CSp_4(q)$ in the second. Here, $\CSp_{4}(q)$
is the group of $4\times 4$ matrices $X$ with
entries in $\bF_q$ having
the property that $X^{tr}fX = af$ for some $a \in \bF_q^{\times}$,
$f$ being the matrix of the symplectic form. For the purposes of this 
proof assume that the 
symplectic form is given as 
\[
f = \begin{bmatrix}  0& 1& 0& 0 \\ -1& 0& 0& 0 \\ 0& 0& 0& 1 \\ 0& 0& -1& 0
\end{bmatrix}.
\]

Let $G = H/J$ be a group associated to $G_0$.
That is, $G_0 \leq H \leq G_1$ and $J \leq Z(H)$.
Then a Sylow $2$-subgroup $S = S_{G}$ of $G$ is a section of a
Sylow $2$-subgroup $S_{G_1}$ of $G_1$. Indeed, a Sylow $2$-subgroup 
$S_H$ of $H$ is subgroup of a Sylow $2$-subgroup $R$ of $\GL_4(q)$. 
The group $R$ is isomorphic to a wreath product 
$R = (U_1 \times U_2) \rtimes C_2$ where $U_1, U_2$ are Sylow $2$-subgroups
of $\GL_2(q)$ \cite{CF}. In particular, we use the 
following notation:
\[ 
s(A,B) = \begin{bmatrix} A & 0 \\ 0 & B \end{bmatrix}, \qquad 
t(A,B) = \begin{bmatrix} 0 & B \\ A & 0 \end{bmatrix} = ws(A,B),
\]
where these are matrices of $2 \times 2$ blocks, $A$ and $B$ are elements
of $\GL_2(q)$ and $w = t(I,I)$. 
Then $R$ is generated by all
$s(A,B)$ for $A$ and $B$ in $S_{\GL_2(q)}$ and the element $t(I,I)$ where
$I$ is the $2\times 2$ identity matrix. Note that an element of $J$ must
be a scalar matrix $s(\zeta I, \zeta I)$ for some $J$. Because of the 
choices of the form, there are Sylow $2$-subgroups
of $\CSp_4(q)$ that respect this structure

Note that there exist subgroups $D_J$ and $M_H$ of 
$\bF_q^\times$ that determine $J$ and $H$. That is, $J$ is the 
set of all scalar matrices with diagonal entry in $D_J$. In type
$A_3$, $H$ is the subgroup of all elements in 
$\GL_4(q)$ with determinant in $M_H$. In type $B_2$, $H$ is the 
subgroup of all $X$ with $X^{tr}fX = af$ for some $a \in M_H$.

Suppose that $J$ has odd order.
Then, by an eigenvalue argument (cf. \cite[Lemma 3.3]{CMN3}), 
any involution
in $H$ is conjugate to a diagonal matrix. Note that in type $B_2$ (and 
$\ls2A_3$), the eigenspaces $V_1$ and $V_{-1}$ corresponding to the
eigenvalues $1$ and $-1$ of an involution $u$ are orthogonal to each other. 
Consequently, there exists a change of basis matrix that conjugates $u$
into a diagonal matrix and also preserves the form, and it is 
an element of $H$. It follows that every elementary abelian
$2$-subgroup in $G$ 
is conjugate to a subgroup of the image modulo $J$ of the 
group of diagonal elements of order $2$ in $H$.
Hence, in this case we are finished. For the rest of 
the proof assume that $J$ has even order.

Next suppose that $S_J \neq S_{Z(H)}$. That is, suppose that there is an 
element of the center of $H$ whose order is a power of $2$, and that is not 
in $J$. In particular there exists a scalar element of $H$ whose square 
is in $J$. In addition, because the order of $J$ is even, the element 
$s(I, -I)$ is central in $S = S_{G}$. Thus $Z(S)$ has 
$2$-rank $2$ and we are done by Lemma \ref{lem:normal3rk}.

We have reduced the proof to the situation in which $S_J = S_{Z(H)}$.
Our aim is to show that the centralizer of every involution in $S$
has $2$-rank at least $3$. This will complete the proof in the cases of types
$A_3$ and $B_2$ (and $\ls2A_3$). 

First consider involutions represented modulo $J$ by a matrix 
of the form $s(A,B)$ in the case that 
$q \equiv 1\pmod4$ and the type is $A_3$ or $B_2$. (The argument in the 
case or type $\ls2A_3$ with $q \equiv 3\pmod4$ is very similar.)
In this case, a Sylow $2$-subgroup of $\GL_2(q)$ is 
generated by the elements
\[
W = \begin{bmatrix} 0 & 1 \\ 1 & 0 \end{bmatrix},  \quad
Y = \begin{bmatrix} 0 & -1 \\ 1 & 0 \end{bmatrix} \quad
\text{  and } \quad 
X_\zeta = \begin{bmatrix} \zeta & 0 \\ 0 & 1 \end{bmatrix}
\] 
for $\zeta$ a generator of the Sylow $2$-subgroup of $\bF_q^\times$. Let $T$ be the subgroup of 
$S_{\GL_2(G)}$ generated by the scalar matrices of the form $WX_{\zeta^m}WX_{\zeta^m}$ for any $m$. 
If the class of $u = s(A,B) \in H$ is an involution in $H/J$, 
then $A^2 = B^2 = \mu I$ for some $\mu\in\F_q^\times$. The quotient 
$S_{\GL_2(q)}/T$ is a dihedral group generated by the classes of 
$W$ and $X_\zeta$. An involution in this group must be represented
by either $W$ or $X_{\zeta^m}$ for some $m$. Then if 
the class of $u = s(A,B)$ is an involution in $H/J$, 
it has either the form $s(X_{\zeta^m},X_{\zeta^m})$
or $s(A,B)$ with $A$ and $B$ in the subgroup $V = \langle X_{-1}, W \rangle$.
Now notice that the subgroup generated by $w$ and all $s(A,B)$ with 
$A, B \in V$ is elementary abelian of $2$-rank at least $3$. If 
$u = s(X_{\zeta^m},X_{\zeta^m})$ is in $H$, then so also is $w$ and
$s(I, -I)$, and the classes of these elements generate a subgroup of
$H/J$ having $2$-rank $3$. So we are done in this case. 

Next suppose that the class of $s(A,B)$ is an involution in $H/J$,
in the case that $q \equiv 3\pmod4$ and 
the type is $A_3$ or $B_2$.
(The same argument works when the type is $\ls2A_3$ with 
$q \equiv 1\pmod4$.) In this case $J = Z(\GL_4(q))$ has order $2$ and
is generated by $-I_4$, where $I_4$ is the $4 \times 4$ identity matrix.
A Sylow $2$-subgroup $S_{\GL_2(q)}$ is semidihedral. In this case
one of two things can happen. The first is that $A$ and $B$ are actual
involutions. If $A$ is a noncentral involution, the subgroup generated 
by the classes of $w$, $s(A,A)$ and $s(I, -I)$ has $2$-rank $3$
in $H/J$. The other possibility is that $A$ and $B$ have order $4$ and 
commute modulo $J$. The only possibility here is that $A$ and $B$
are contained in a quarternionic subgroup of order $8$ in 
$S_{\GL_2(q)}$. If $A$ is not contained in the subgroup generated by $B$
then the classes of $w$, $s(A,B)$, and $s(B,A)$ generate an elementary 
abelian subgroup in $H/J$ of order $8$. Otherwise, let $X$ be another generator
of the quaternionic subgroup. Then the classes of $w$, $s(A,B)$ and
$s(X,X)$ generate an elementary abelian subgroup of order $8$. So we
are done in this case. 

Finally, suppose that the class of $u = t(A,B)= ws(A,B)$ 
is an involution in $H/J$.
It must be that $AB = BA = \mu I$ for some $\mu \in \bF_q^\times$. 
That is, $B = \mu A^{-1}$. In the case that the type is $A_3$, then 
$s(A, I)^{-1}t(I, \mu I)s(A, I)= t(A,B)$.
So every such involution is conjugate to one of the form 
$y_{\mu} = t(I, \mu I)$. In turn, any $y_\mu$ commutes with any 
involution $s(A,A)$ for $A$ not central in $S_{\GL_2(q)}$. Thus, in
type $A_3$, the centralizer of $u$ has $2$-rank at least $3$, and we
are done.  

So suppose the type is $B_2$. We have that $ufu^{tr} = \mu f$ implying
that $AYA^{tr} = Y$, as expected. 
A set of representatives of the generators of 
$S_{\GL_2(q)}$ can be chosen so that their product with their
transpose is a scalar matrix (see the above descriptions in addition
to \cite{CF}). 
The implication is that $v = t(y,y)$ commutes with $u$. Thus, the 
centralizer of $u$ has $2$-rank at least $3$, as it contains
the image in $H/J$ of $\langle u, j, t(-I,I) \rangle$.

To summarize, we have proved that the centralizers of the involutions in a group associated to a finite group of Lie type $A_3$, $\ls2A_3$ and $B_2$ have $2$-rank at least $3$, and so there are no 
maximal elementary abelian $2$-subgroups of rank $2$.

\smallskip
\noindent 
{\bf Types $\ls3D_4$, $G_2$ and $\ls2G_2$.}
Fong and Milgram \cite{FM} studied in great detail the $2$-local
structure of $G$ in the case that $G$ has type $\ls3D_4$ or  $G_2$,
and described the structure of the
centralizers of the Klein four groups in a fixed Sylow $2$-subgroup of
$G$. They proved that these split into two conjugacy classes and that
their centralizers both have $2$-rank $3$.
While they assumed that $q\equiv 1\pmod4$, the Sylow $2$-subgroups are
isomorphic to those in the case where $q \equiv 3\pmod4$. So the same conclusion
is reached. A detailed description in the general case is in the paper
by Fong and Wong \cite{FW}. Note that $G_2(q)$ embeds in $\ls3D_4(q)$
as a subgroup of odd index, and hence their Sylow $2$-subgroups are
isomorphic (see also \cite[Theorem]{FW}).
We are left with the case of the groups $\ls2G_2(3^{2n+1})$. By \cite[Theorem 4.10.2(e)]{GLS} 
(see also \cite[Theorem 8.5]{ree61}),
a Sylow $2$-subgroup of $\ls2G_2(3^{2n+1})$ is elementary abelian of order
$8$, and so there are no maximal elementary abelian $2$-subgroups of
rank $2$.

This completes the proof of
Theorems \ref{thm:l=2} and \ref{thm:char2}.
\end{proof}


\section{When ${\mathbb G}$ is simple, $\ell = p$} 
\label{sec:defining}

When $\ell = p$, the structure of a Sylow $\ell$--subgroup 
of $G$ does not depend on the isogeny type. However, $\TF(G)$ 
can and does depend on the isogeny type because of  the fusion of $\ell$-subgroups. 
The following theorem summarizes the calculation of $\TF(G)$ in 
the defining characteristic. 


\begin{thm}\label{thm:l=p}
  Let $G$ be a finite group of Lie type, as in
  Definition~\ref{def:lietype}. Assume that the ambient algebraic
  group $\G$ is simple, and $\ell = p$.
 Then $\TF(G)\cong {\mathbb Z}$, provided $G$ is not one of the
 following types. 
\begin{itemize}
  \item[{}]
    \begin{AutoMultiColItemize}

\item[(a)] $\,\,A_{1}(p)$,
\item[(b)] $\!\!\ls2A_{2}(p)$,
\item[(c)] $\!\!\ls2B_2(2^{2a+1})$ (for $a\geq1$), 
\item[(d)] $\!\!\ls2G_2(3^{2a+1})$ (for $a\geq 0$), 
\item[(e)] $A_{2}(p)$,
\item[(f)] $B_{2}(p)$ and 
\item[(g)] $G_2(p)$.
 \end{AutoMultiColItemize}
\end{itemize}
In these exceptions, $\TF(G)$ is given in Tables 
\ref{sec:defining}.1 and \ref{sec:defining}.2.  
\end{thm}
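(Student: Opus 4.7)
The plan is as follows. By Theorem~\ref{thm:poset2}, proving $\TF(G)\cong\mathbb{Z}$ amounts to showing that $G$ has a unique $G$-conjugacy class of maximal rank $2$ elementary abelian $p$-subgroups (if $\rk_p(G)=2$) or no such maximal subgroups at all (if $\rk_p(G) > 2$); the exceptional cases instead require an explicit enumeration. Since $\ell = p$, every elementary abelian $p$-subgroup of $G$ is conjugate into a fixed Sylow $p$-subgroup $U = \mathbb{U}^F$, where $\mathbb{U}$ is the unipotent radical of an $F$-stable Borel subgroup of $\mathbb{G}$. The analysis therefore reduces to the structure of $U$, together with the action on $U$ of $N_G(U)$.

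First I would establish the generic case via a Lie-rank reduction. Using the Chevalley commutator formula, I would produce inside $U$ a normal elementary abelian subgroup of $p$-rank at least $p+1$ (respectively at least $3$, when $p=2$) whenever the root system of $\mathbb{G}$ is large enough. Concretely, one takes $A = \prod_{\alpha \in \Psi} U_\alpha^F$ for an $F$-stable \emph{commutative} set $\Psi$ of positive roots, i.e.\ $\alpha + \beta \notin \Phi$ for all $\alpha,\beta \in \Psi$, with $\Psi$ closed under addition of positive roots. Then $A$ is normal in $U$ and elementary abelian of $p$-rank $|\Psi|\cdot f$, where $q = p^f$. For every type $\mathbb{G}^F$ outside the list (a)--(g) one can arrange $|\Psi|\cdot f \geq p+1$ (and $\geq 3$ when $p=2$), so Lemma~\ref{lem:normal3rk}(a) together with Theorem~\ref{T:norank2}(b) immediately yield $\TF(G)\cong\mathbb{Z}$.

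The remaining cases (a)--(g), which lie outside the reach of this construction, must be analysed individually. In each I would (i) describe $U$ explicitly in terms of its root subgroups or matrix coordinates; (ii) enumerate the maximal elementary abelian subgroups of $U$; and (iii) determine their $G$-fusion using the action of $T^F$ and the Weyl group. For $A_1(p)$ the Sylow $p$-subgroup is cyclic, so $\rk_p(G)=1$ and $\TF(G)=0$. For $\ls2B_2(2^{2a+1})$ and $\ls2G_2(3^{2a+1})$ one exploits the Suzuki/Ree structure of $U$ as a two- or three-step nilpotent group with elementary abelian centre. For $A_2(p), \ls2A_2(p), B_2(p), G_2(p)$, the unipotent radical has small nilpotency class and its maximal elementary abelian subgroups are parametrised naturally by subsets of positive roots, while the number of $G$-conjugacy classes depends on the isogeny type because diagonal automorphisms in $G/O^{p'}(G)$ act differently on subgroups arising from long roots versus short roots.

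The main obstacle is step (iii) in the small-rank cases: correctly distinguishing the $G$-conjugacy classes requires careful bookkeeping of how the torus $T^F$, the Weyl group, and any diagonal automorphism coming from the isogeny type permute the candidate subgroups. This can be carried out inside concrete matrix models ($\SL_3, \SU_3, \Sp_4$) for the classical cases, and via the standard $7$-dimensional representation for $G_2(p)$. The resulting counts populate the entries of Tables~\ref{sec:defining}.1 and \ref{sec:defining}.2, extending the partial results of \cite{CMN,CMN3}.
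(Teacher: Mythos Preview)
Your reduction for the generic case has a genuine gap. The claim that one can always arrange $|\Psi|\cdot f\geq p+1$ is false: for a fixed Lie type the size of any commutative ideal $\Psi\subseteq\Phi^+$ is bounded by the number of positive roots, independently of $p$, while $f=1$ whenever $q=p$. Take $G=\SL_4(p)$ (type $A_3$, not on the exceptional list). The largest commutative ideal in $A_3^+$ has four elements, for instance $\{\alpha_2,\ \alpha_1+\alpha_2,\ \alpha_2+\alpha_3,\ \alpha_1+\alpha_2+\alpha_3\}$, so your construction yields a normal elementary abelian subgroup of $p$-rank $4$; for $p\geq 5$ this is below $p+1$ and Lemma~\ref{lem:normal3rk}(a) does not apply. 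The same obstruction occurs for every group of bounded Lie rank once $p$ is large: the criterion is simply too coarse for the defining-characteristic situation, where $p$ is unbounded but the root system is fixed.

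The paper instead quotes \cite[Theorems~7.3 and 7.5]{CMN} for the generic reduction: those results give $\TF(G)\cong\Z$ whenever $|\widehat\Delta|\geq 3$, and also when $|\widehat\Delta|=2$ except for $A_2(p)$, $B_2(p)$, $G_2(p)$. The arguments in \cite{CMN} do not go via a single large normal abelian subgroup; they work directly with the Chevalley commutator relations to show that every rank-$2$ elementary abelian $p$-subgroup of $U$ is contained in a larger one, a finer statement than Lemma~\ref{lem:normal3rk} or Theorem~\ref{T:norank2} can deliver. Your plan for the exceptional cases is broadly in line with the paper, which cites \cite{CMN} for $B_2(p)$, $G_2(p)$ and ${}^2A_2(p)_{sc}$, and carries out new explicit computations only for $A_2(p)$ and ${}^2A_2(p)_{ad}$ to pin down the isogeny dependence.
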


We proceed to justify this result. 
For the simple algebraic group ${\mathbb G}$ fix an  
$F$-stable maximal split torus ${\mathbb T}$. 
Let $\Phi$ be the root system associated to 
$({\mathbb G},{\mathbb T})$. The positive (resp. negative)
roots are $\Phi^{+}$ (resp. $\Phi^{-}$), and $\Delta$ 
is a base consisting of simple roots.  

Let ${\mathbb B}$ be an $F$-stable Borel subgroup containing 
${\mathbb T}$ corresponding to the positive roots, and 
${\mathbb U}$ be the unipotent radical of ${\mathbb B}$. Then 
${\mathbb B}={\mathbb U}\rtimes {\mathbb T}$ with ${\mathbb B}$ and 
${\mathbb U}$ being $F$-stable. Set $B={\mathbb B}^{F}$ 
and $U={\mathbb U}^{F}$. 

There are three kinds of finite groups of Lie type $G$  
according to the type of $F$: (i) the untwisted groups, (ii) 
the twisted (Steinberg) groups and (iii) the very twisted groups 
(cf.\ \cite[Section 4]{CMN}, \cite[Section 2.3]{GLS}). 
In case (ii), $F$ involves 
a nontrivial graph automorphism 
$\tau$ of order $d$ of the underlying Dynkin 
diagram, as well as the Frobenius map. The 
automorphism $\tau$ induces a map from $\Phi$ to the 
{\em twisted root system\/} $\widetilde{\Phi}$ of $G$. 
Furthermore, we can define an equivalence relation on 
$\widetilde{\Phi}$ by identifying positive colinear 
roots, and let $\widehat{\Phi}$ be the set of equivalence 
classes. Therefore, we have mappings 
$\Phi\rightarrow \widetilde{\Phi}\rightarrow 
\widehat{\Phi}$. Let $\widehat{\Delta}$ be the image of $\Delta$ under this 
composition of maps and $\widetilde{\Delta}$ be the image of $\Delta$ under 
$\Phi\rightarrow \widetilde{\Phi}$. There are root 
subgroups of $G$ and these are indexed by the elements 
of $\widehat{\Phi}$. In the case that $G$ is 
untwisted then $\Phi=\widetilde{\Phi}=\widehat{\Phi}$. 
In case $G$ is a Steinberg group but not $~^2A_{2m}(q)$ we have
$\widetilde{\Phi}=\widehat{\Phi}$ 
(cf.\ \cite[Section 2.3]{GLS} for more details). 

As stated in the proof of \cite[Proposition~24.21]{MT},
there is a short exact sequence of groups 
\[
\xymatrix{1\ar[r]& Z^{F}\ar[r]& G_{sc}\ar[r]& G \ar[r]&Z_F\ar[r]&1}.
\]

In the case that $\ell=p$, $U$ is a Sylow $p$-subgroup of $G$. 
From \cite[Table 24.2]{MT}, $p$ does not divide $|Z^{F}|$. 
Therefore, the Sylow $p$-subgroups of $G_{sc}$ and of $G$ are 
isomorphic for any isogeny type, and so $\TF(U_{sc})\cong \TF(U)$. 

Given a finite group of Lie type $G$ where the underlying algebraic group 
is simple when ${\ell}=p$, one can make reductions to analyzing 
$\TF(G)$ in specific cases as follows. First, $\TF(G)\cong {\mathbb Z}$ 
when $|\widehat{\Delta}|\geq 3$ by \cite[Theorems 7.3 and 7.5]{CMN}. Note that 
the proofs of these results depend only on the 
structure of the Sylow $\ell$-subgroups. 
In the case when $|\widehat{\Delta}|=2$, by \cite[Theorems 7.3 and 7.5]{CMN}, 
$\TF(G)\cong {\mathbb Z}$ unless $G$ is $A_{2}(p)$, $B_{2}(p)$ or
$G_{2}(p)$. (Recall that we use the non-standard notation that e.g.,
$B_{2}(p)$  without any subscript denotes {\em any} group in this
isogeny class.)
The computation for $\TF(G)$ for these groups is given in Table  \ref{sec:defining}.1. 
\smallskip
\begin{center}
\begin{tabular}{  |l | c  | r | }
\hline 
\multicolumn{3}{|c |}
{Table  \ref{sec:defining}.1: \  $|\widehat{\Delta}|=2$} \\ \hline \hline
             $G$ & & rank $\TF(G)$ \\ \hline \hline
$A_{2}(p)_{sc}$      & $p=2$    &  2   \\ \hline
$A_{2}(p)_{sc}$      &  $p\geq 3$, $p \not\equiv 1\  (\text{mod} \ 3)$   &  3   \\ \hline
$A_{2}(p)_{sc}$      &   $p\geq 3$, $p \equiv 1\  (\text{mod} \ 3)$  &  5   \\ \hline
$A_{2}(p)_{ad}$      & $p=2$    &  2   \\ \hline
$A_{2}(p)_{ad}$     &   $p\geq 3$  & 3 \\ \hline
$B_{2}(p)$ &   $p=2, 3$    & 1  \\ \hline 
$B_{2}(p)$ &   $p\geq 5$  & 2 \\ \hline
$G_2(p)$ & $p=2,3,5$ &   1     \\  \hline
$G_2(p)$ & $p\geq 7$  & 2   \\ \hline
\end{tabular}
\end{center} 
\smallskip

Finally, in the case that $|\widehat{\Delta}|=1$, 
the Sylow ${\ell}$-subgroups are trivial intersection subgroups.  
The groups $G$ with $|\widehat{\Delta}|=1$ are 
$A_{1}(q)$, $^{2}A_{2}(q)$, $~^2B_2(2^{2a+1})$, 
and $~^2G_2(3^{2a+1})$. If $G=A_{1}(q)$ or $\ls2A_{2}(q)$
with $q>p$, the Sylow $p$-subgroups of $G$ 
have a noncyclic center, and therefore
$\TF(G)\cong {\mathbb Z}$ by Theorem~\ref{thm:poset2}. 
For the rest of the cases when 
$|\widehat{\Delta}|=1$, $\TF(G)$ is given 
in Table  \ref{sec:defining}.2 (cf.\ \cite[Section 5]{CMN}). 

\smallskip
\begin{center}
\begin{tabular}{ | l | c  | r | c  |l | c  | r | }
\hline
\multicolumn{3}{| c |}{Table  \ref{sec:defining}.2:\ \ $|\widehat{\Delta}|=1$} \\ \hline \hline
$G$ & & rank $\TF(G)$  \\ \hline \hline
$A_{1}(p)$ & $p\geq 2$  & 0  \\ \hline
$^{2}A_{2}(p)_{sc}$  &  $p=2$ & 0  \\ \hline 
$^{2}A_{2}(p)_{sc}$  & $p\geq 3$, $p \not\equiv -1\  (\text{mod} \ 3)$   & 1  \\ \hline
$^{2}A_{2}(p)_{sc}$  & $p\geq 3$,  $p \equiv -1\  (\text{mod} \ 3)$   & 3   \\ \hline 
$^{2}A_{2}(p)_{ad}$  &  $p=2$ & 0  \\ \hline 
$^{2}A_{2}(p)_{ad}$  &  $p\geq 3$ & 1  \\ \hline 
$~^2B_2(2)$   &       & 0 \\ \hline
$~^2B_2(2^{2a+1})$ & $a>0$      & 1 \\  \hline
$~^2G_2(3^{2a+1})$  & $a\geq 0$   & 1 \\ \hline
\end{tabular}
\end{center}
\smallskip

There is still some explanation needed to justify the data in the 
tables. We rely on some of the computations in \cite{CMN} in cases where there is one 
isogeny type. The results in \cite{CMN}  were only stated for the finite groups of Lie type arising from groups of adjoint isogeny type. 
Our new result, Theorem~\ref{thm:l=p}, extends to all finite groups of Lie type. 
We now proceed to dissect the cases when there is more than one isogeny type. 

For $A_{1}(p)$ a Sylow $p$-subgroup is cyclic of order $p$, and so
$\TF(G)$ does not depend on the isogeny type.
For $B_{2}(p)=C_{2}(p)$, we 
can use the calculations in \cite[Section 8]{CMN} 
which handle $B_{2}(p)_{sc}$ 
and $B_{2}(p)_{ad}$.   

Next we consider the case of $A_{2}(p)$ where there are two 
isogeny types. Let $U\cong U_{sc}\cong U_{ad}$ 
denote a Sylow $p$-subgroup in either type. 
The Sylow $p$-subgroup $U$ of $G$ is an 
extraspecial $p$-group of order $p^3$ and 
exponent $p$, if $p>2$. Moreover, if $p=2$ then $\SL_3(2)\cong \PSL_2(7)$ so $U$ is a dihedral 
group of order $8$, and has two maximal elementary abelian $2$-subgroups 
which are not conjugate in $U$ or in $G$. Consequently, $\TF(G)\cong {\mathbb Z}\oplus {\mathbb Z}$. 

If $p>2$, then all the elements of $U$ have order $p$, and there 
are $p+1$ maximal elementary abelian $p$-subgroups which are normal
in $U$. They can be described as follows. Let $\alpha$ 
and $\beta$ be simple roots so that
$U$ is generated by $x_{\alpha}$ and $x_{\beta}$. We have 
$$
x_{\alpha} = \begin{bmatrix} 1 &0 &0 \\  1& 1& 0 \\  0 & 0 &1 
\end{bmatrix}, \
x_{\beta} = \begin{bmatrix} 1 &0 &0 \\ 0& 1& 0 \\  0 & 1 &1 
\end{bmatrix}, \
x_{\alpha+\beta}= \begin{bmatrix} 1 &0 &0 \\ 0& 1& 0 \\  1 &0 &1
\end{bmatrix}.
$$
Then the 
elementary abelian $p$-subgroups in $U$ are the subgroups
$$
E_0 = \langle x_{\alpha}, x_{\alpha+\beta} \rangle, \ \ \ 
E_p = \langle x_{\beta}, x_{\alpha+\beta} \rangle, \ \ \  
E_i = \langle x_{\alpha}x_{\beta}^i, 
x_{\alpha+\beta}\rangle \ \ \text{for $i = 1, \dots, p-1$}.$$ 
Note that 
$$ 
x_{\alpha}x_{\beta}^{i}= \begin{bmatrix} 1 &0 &0 \\ 1& 1& 0\\  0 & i&1
\end{bmatrix}.
$$

Consider the action by conjugation of the group 
$D=\{t_{a, b, c}\mid\ a, b, c \in {\mathbb F}_{p}^\times\}$
where $t_{a,b,c}$ is the $3\times 3$ diagonal matrix with entries 
$a, b, c$. Let $I$ be the $3 \times 3$ identity matrix, and 
let $t= t_{a,b,c}$. We have that
\begin{equation} 
tx_{\alpha+\beta}t^{-1}= 
\begin{bmatrix} 
1 & 0 & 0\\  0 & 1& 0\\  a^{-1}c & 0  & 1
\end{bmatrix}, \ 
tx_{\alpha}t^{-1}= 
\begin{bmatrix} 
1 & 0 & 0\\  a^{-1}b & 1& 0\\  0 & 0  & 1
\end{bmatrix}, \ 
tx_{\beta}t^{-1}= 
\begin{bmatrix} 
1 & 0 & 0\\  0 & 1& 0\\  0 & b^{-1}c  & 1
\end{bmatrix},
\end{equation} 
\begin{equation} \label{eq:conjugation}
tx_{\alpha}x_{\beta}^it^{-1} = \begin{bmatrix} 
  1 & 0 & 0\\  a^{-1}b & 1 & 0\\  0 & b^{-1}ci &1 
\end{bmatrix}.
\end{equation} 
Under this action of $D$, it is easy to check that the 
subgroups $E_0, E_p$, and $E_1$ are in distinct $B$-conjugacy
classes. On the other hand, the set $\{E_i \mid i = 1, \dots, p-1 \}$
is a single $D$-conjugacy class because, given a nonzero $t\in\bF_p$ and setting
$a=b$, we can choose $0\neq c\in {\mathbb F}_{p}$ with $(b^{-1}c)i=t$. 
This shows that $\TF(B)\cong{\mathbb Z}^{\oplus 3}$ for 
$G=\text{PGL}_{3}(p)$. 

Now, set $\hat{D}=\{t_{a,b,c}\mid\ abc = 1\}$.
Then, in (\ref{eq:conjugation}), we set $a = b$, that is,
$b^{2}c=1$.
From (\ref{eq:conjugation}), we then have
$(b^{-1}c)i=b^{-3}i$. 
If $p-1\not\equiv 0 \pmod 3$, then for any
$t\in {\mathbb F}_{p}^\times$ there exists
$b\in {\mathbb F}_{p}^\times$ such that $(b^{-3})i=t$. 
If $p-1\equiv 0\pmod3$, let 
$\zeta$ be a generator of the multiplicative group ${\mathbb F}_{p}^{\times}$.
Then $b^{-3} \in \langle \zeta^{3} \rangle$ 
and every element in $\langle \zeta^{3} \rangle$ 
is a cube. So to determine the number of conjugacy classes 
of $\hat{D}$ on $\{E_i \mid i = 1, \dots, p-1 \}$ we use the 
equation $b^{-3}i=t$ to compute the number of cosets 
of $\langle \zeta^{3} \rangle$ in ${\mathbb F}_p^{\times}$ which turns out to be $3$. 
Therefore, in this case the number of $\hat{D}$-conjugacy classes of 
elementary abelian $p$-subgroups of rank $2$ in $U$ is $5$. 
In summary, for $G=\text{SL}_{3}(p)$ then 
$\TF(B)\cong{\mathbb Z}^{\oplus 3}$ (resp. ${\mathbb Z}^{\oplus 5}$) 
when $p\not\equiv1\pmod3$ (resp. $p\equiv1\pmod3$). 
Now we can show that $\TF(G)\cong \TF(B)$ by using the Bruhat decomposition. 

Next we consider the case of $\ls2A_{2}(p)$. When $p=2$, 
$U$ is a quaternion group and the $2$-rank of $U$ is $1$. Therefore, 
in this case $\TF(G)=\{0\}$. 

Now assume that $p\geq 3$. The case where 
$G=\text{SU}_{3}(p)$ was done in \cite[Section 5]{CMN}. 
This corresponds to $\ls2A_{2}(p)_{sc}$ 
(not $\ls2A_{2}(p)_{ad}$  which is incorrectly stated in \cite[Section 5]{CMN}). 
Now consider $G=\text{PGU}_{3}(p)$ for $p\geq 3$. As 
in the untwisted case we consider 
$D=\{t_{a,b,c}\mid\ a, b, c \in 
{\mathbb F}_{p^{2}}^{\times}\}$, and 
$D\cap \GU_{3}(p)$. The relations we obtain 
by intersecting are $ac^{p}=1$, $b^{p+1}=1$, and 
$ca^{p}=1$. In $U$ there are $p+1$ elementary abelian 
$p$-subgroups of $p$-rank $2$ given by 
$E_{i}=\langle x_{i},z \rangle$, $1\leq i \leq p+1$. Let $t$ be a 
generator for ${\mathbb F}_{p^{2}}^{\times}$.
The elements $x_{i}$ and $z$ are defined by  
\begin{equation} \label{eq:xi}
x_i=\begin{pmatrix} 1 & 0 & 0\\
t^i & 1 & 0\\
b_i & t^{ip} & 1\end{pmatrix}\quad\hbox{with $b_i+b_i^p=t^{i(p+1)}$}~,
\end{equation}

$$z=\begin{pmatrix} 1 & 0 & 0\\
0 & 1 & 0\\
u & 0 & 1\end{pmatrix}\quad\hbox{where $u\in{\mathbb F}_{p^2}$ 
satisfies $~u+u^p=0$}.
$$

 For any $j$, 
we can find $a\in {\mathbb F}_{p^{2}}^\times$ and $b, c$ such that 
$a^{-1}b=t^{j}$ satisfying the aforementioned 
relations as follows. Set $a=t^{(p-1)-j}$, $b=t^{p-1}$ and 
$c=t^{-((p-1)-j)p}$. Then  
\begin{equation} \label{eq:conjugation2}
t_{a,b,c}x_{i}t_{a,b,c}^{-1} =\begin{bmatrix} 1 & 0 & 0\\  
a^{-1}bt^{i}& 1& 0\\  
a^{-1}cb_{i} & b^{-1}ct^{ip} &1 
\end{bmatrix}
=\begin{bmatrix} 1 & 0 & 0\\  t^{i+j}& 1& 0\\  
a^{-1}cb_{i} & t^{(i+j)p} &1 
\end{bmatrix}.  
\end{equation} 
One can verify that $a^{-1}cb_{i}$ satisfies the equation in (\ref{eq:xi}) with $i$ replaced with $i+j$. 
This shows that under conjugation by elements in $D\cap \text{GU}_{3}(p)$, 
there is a single conjugacy class among 
$\{E_{i}\mid 1\leq i \leq p+1\}$. Hence, for 
$G=\text{PGU}_{3}(p)$ with $p\geq 3$, $\TF(G)\cong {\mathbb Z}$.

\section{Extending the results from simple to reductive groups}
\label{sec:reductive}

Let $G = \G^F$ be a finite group of Lie type arising
from a connected reductive algebraic
group $\G$ and  a Steinberg endomorphism $F$ of $\G$.
In this section, we show that the torsion free rank of the group of
endotrivial modules of $G$ can be obtained by considering the
components of the decomposition of $\mathbb G$ as a product of simple
algebraic groups. Our detailed analysis completes the proofs 
of Theorems \ref{thm:tf-reductive-main}
and \ref{thm:main-grp}. 

From \cite[1.8]{Car}, we have that $\G=[\G,\G]\cdot\S$ where
the derived subgroup $[\G,\G]$ is semisimple
and $\S=Z(\G)^{0}$ is the connected center of $\G$.
The intersection of these groups $Z = [\G,\G] \cap \S$ is a finite group.
Therefore, we have an exact sequence
\begin{equation} \label{eq:reductalg}
\xymatrix{
1 \ar[r] &  Z \ar[r] & [\G,\G] \times \S \ar[r] & \G \ar[r] &  1.
}
\end{equation}

Set $G = {\G}^F$ and  $G_{ss}  = {[\G,\G]}^F$.
Upon taking fixed points, one obtains an
exact sequence (cf.\ \cite[Lemma 24.20]{MT})
\begin{equation} \label{eq:ses-reductive}
\xymatrix{
1 \ar[r] & Z^F \ar[r] &  G_{ss} \times \S^F
\ar[r]^{\quad \psi} & G \ar[r] & Z_F \ar[r] & 1
}
\end{equation}
with $Z_F$ denoting co-invariants. Here,
$\psi$ is injective on restriction to both
$G_{ss}$ and $\S^F$.

Since $[\G,\G]$ is semisimple one can express
$[\G,\G] = \H_1  \cdots \H_s$ where each
$\H_i$ is a central product of $n_i$
isomorphic simple algebraic groups
$\K_i$ where $F$ preserves $\H_i$ and
$\H_i^F \cong \K_i^{F^{n_i}}$ \cite[Proposition 2.2.11]{GLS},
the fixed points of $\K_i$ under $F^{n_i}$.
So there is an exact sequence
\begin{equation}\label{eq:ses-prod}
\xymatrix{
1 \ar[r] &  A \ar[r] & \H_1 \times \cdots \times \H_s \ar[r] &
[\G,\G]  \ar[r] & 1
}
\end{equation}
for a finite abelian group $A$ of order prime to $p$. Once
again, we apply \cite[Lemma 24.20]{MT} to get the
exact sequence
\begin{equation}\label{ss-seq}
\xymatrix{
1 \ar[r] &  A^F  \ar[r] & \H_1^F \times \cdots \times \H_s^F
\ar[r] &  G_{ss} \ar[r] & A_F \ar[r] & 1.
}
\end{equation}
For each $i$,  set $H_i = \H_i^F \leq G_{ss}$.
In addition, we have the following statements.
\begin{itemize}
\item[(i)] $\vert Z_F \vert = \vert Z^F \vert$ and
$\vert A_F \vert = \vert A^F \vert$.
\item[(ii)] Suppose that $x$ is an element in $G$ that it not in $G_{ss}$. For
any $i$, conjugation by $x$ preserves $H_i$.
Moreover, if $H_i$ is isomorphic to $\SL_n(q)$, $\SU_n(q)$ or $\Sp_n(q)$, then
$x$ induces on $H_i$ an
automorphism that coincides with conjugation by an element in (respectively)
$\GL_n(q)$, $\GU_n(q)$ or  $\CSp_n(q)$.
\end{itemize}

The equalities in (i) are consequences of the fact that the order of a finite
group of Lie type is independent of the isogeny type, a consequence of
the order formula \cite[Corollary~24.6]{MT}.
For (ii), let $x\in G$ with $x\notin G_{ss}$.
From (\ref{eq:reductalg}), $x=gz$ where
$g\in [\G,\G]$ and $z\in \S$ with $z\neq 1$.
Here $F(x) = x$, so that $g^{-1}F(g) = zF(z^{-1})$.
Moreover, from (\ref{eq:ses-prod}), $g=h_{1}h_{2}\dots h_{s}$ with
$h_{j}\in \mathbb H_j$
for $j=1,2,\dots,s$. Because $z$ is central and
$H_1\cdots H_s$ is a central product,
action of conjugation by $x$ on $H_i$ is the same as
conjugation by $h_i$. Thus $h_i$ is an element of $\mathbb{H}_i$
that normalizes $H_i$.  As explained in \cite[Proposition~2.5.9(b)]{GLS},
this means that $h_i$ lies in the preimage of $(\H_i/Z)^F$ in $\H_i$,
with $Z$ a central subgroup of
$\H_i$. Now, if $H_i$ is $\SL_n(q)$, $\SU_n(q)$ or
$\Sp_n(q)$, then we can without restriction assume that $\K_i$ is
either $\SL_n$ or $\Sp_n$. Let $\tilde \K_i$ be $\GL_n$ and $\CSp_n$
respectively, and let $\tilde \H_i$ be the corresponding central
product, constructed as for $\H_i$. Note that $\H_i \leq \tilde \H_i$,
that the central subgroup $\tilde Z$ of $\tilde \H_i$ is connected, and that $(\H_i/Z)^F \cong (\tilde \H_i
/\tilde Z)^F$. The preimage of $(\tilde \H_i
/\tilde Z)^F$ in $\tilde \H_i$ equals $\tilde \H_i^F \tilde Z$, as
$\tilde Z$ is connected, so $h_i \in \tilde \H_i^F \tilde Z$.
Hence, $h_i$, and therefore $x$, induce the same conjugation on $H_i$  as an element in $\tilde
\H_i^F$, which is what we claimed in (ii).
The main theorem of this section is the following.

\begin{thm} \label{thm:tf-reductive}
Suppose that $G$ is a finite group of Lie type
with $G = \G^F$ for $\G$ a connected
reductive algebraic group over an algebraically closed field
of characteristic $p$, and $F$ a Steinberg endomorphism.
Assume that $\TF(G)$ has rank greater than $1$.

If $\ell \neq p$
then $G \cong U \times V$
where $V$ has order prime to $\ell$ and $\TF(G) \cong \TF(U)$. Moreover,
\begin{itemize}
\item[(a)] if $2 < \ell \neq p$ then $U$ is one of the groups listed
in Theorem \ref{thm:lgeq3}, and
\item[(b)] if $\ell = 2 \neq p$ then $U$ is one of the groups listed
in Theorem \ref{thm:l=2} and $V$ is abelian.
\end{itemize}
In the event that $\ell =p$, then
$G/Z(G) \cong H/Z(H)$, where $H$ is one of
the groups in
 Tables
 \ref{sec:defining}.1 and \ref{sec:defining}.2.
\end{thm}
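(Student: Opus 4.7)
The plan is to use the two isogeny exact sequences \eqref{eq:ses-reductive} and \eqref{ss-seq} together with Proposition~\ref{P:groupexactseq} to reduce to a direct product of finite groups of Lie type coming from simple algebraic groups (together with the torus $\S^F$), and then to argue that exactly one of these factors can carry the $\ell$-torsion whenever the rank of $\TF(G)$ exceeds one.

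First I would observe that $Z^F$, $Z_F$, $A^F$, and $A_F$ all have order prime to $p$, since $Z \leq \S$ and $A$ embed into tori over an algebraically closed field of characteristic $p$, and using (i). When $\ell = p$, Proposition~\ref{P:groupexactseq} applied to the two sequences identifies the orbit posets of noncyclic elementary abelian $\ell$-subgroups of $G$ with those of $H_1 \times \cdots \times H_s \times \S^F$; since $\S^F$ has order prime to $p$, this reduces further to $H_1 \times \cdots \times H_s$. For $\ell \neq p$ the proposition in general gives only a surjection on orbit posets, so the rank of $\TF(G)$ is bounded above by that of $\TF(H_1 \times \cdots \times H_s \times \S^F)$, with equality once one shows (case-by-case in the surviving lists) that the prime-to-$\ell$ central quotients split off as a direct factor.

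The key product lemma is: for a direct product $G_1 \times G_2$ of finite groups with $\ell$ dividing both $|G_1|$ and $|G_2|$, the rank of $\TF(G_1 \times G_2)$ equals one. If $\rk_\ell(G_1 \times G_2) \geq 3$, every rank-two elementary abelian $E \leq G_1 \times G_2$ sits in some Sylow $P_1 \times P_2$ and extends to rank three by appending a central $\ell$-element of $P_1$ or $P_2$ not already captured by the projection of $E$, so $\CA^{\geq 2}_\ell(G_1\times G_2)/(G_1\times G_2)$ is connected. If instead $\rk_\ell(G_1 \times G_2) = 2$, both Sylow $\ell$-subgroups $P_i$ are cyclic and the unique rank-two elementary abelian in $P_1 \times P_2$ is $\Omega_1(P_1) \times \Omega_1(P_2)$; Sylow's theorem then gives a single conjugacy class. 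Either way $\TF(G_1 \times G_2) \cong \mathbb Z$ by Theorem~\ref{thm:poset2}. Iterating on the product $H_1 \times \cdots \times H_s \times \S^F$, the assumption that the rank of $\TF(G)$ exceeds one therefore forces exactly one of these factors to have order divisible by $\ell$; since $\S^F$ is abelian, the number of conjugacy classes of maximal rank-two elementary abelians inside it is at most one, so this exceptional factor must be one of the $H_i$, which we name $U$.

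Then the rank of $\TF(U)$ exceeds one and $U$ is a finite group of Lie type arising from a simple algebraic group, so Theorems~\ref{thm:lgeq3}, \ref{thm:l=2}, and \ref{thm:l=p} pin down $U$ on the advertised lists. The remaining factors together with $\S^F$ form a normal subgroup $V$ of order prime to $\ell$, yielding $G \cong U \times V$ up to the central isogeny; for $\ell = p$ this collapses to $G/Z(G) \cong H/Z(H)$ because $Z(G) = Z(\G)^F$ is a central $p'$-group quotiented out on both sides. For $\ell = 2 \neq p$, $V$ is abelian, since every finite group of Lie type coming from a simple algebraic group has even order, forcing no other $H_j$ to appear in $V$, which is then built entirely from the abelian torus $\S^F$. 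The main obstacle is the careful tracking of $\ell$-fusion through the central isogeny sequences when $\ell \neq p$ and $\ell$ divides $|Z^F|$ or $|A^F|$: the surjection on orbit posets can a priori identify distinct conjugacy classes, and one must verify that this identification is compatible with the direct-product decomposition $G \cong U \times V$ asserted in the theorem.
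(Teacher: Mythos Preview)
Your product lemma is correct and is essentially Lemma~\ref{lem:ele1}(b)(c): if $P = P_1 \times P_2$ with both $P_i$ nontrivial $\ell$-groups, then $Z(P) = Z(P_1) \times Z(P_2)$ is noncyclic, so $\CA_\ell^{\geq 2}(P)/P$ is connected. The gap is in the descent from the cover $H_1 \times \cdots \times H_s \times \S^F$ to $G$. Proposition~\ref{P:groupexactseq} requires \emph{both} the kernel and the cokernel to have order prime to $\ell$; when $\ell \neq p$ and $\ell$ divides $|Z^F|$ or $|A^F|$, the hypothesis fails outright, and your claim that one still obtains ``only a surjection on orbit posets'' is not justified. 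Indeed, if $\ell \mid |Z^F|$ then a rank-$2$ elementary abelian subgroup upstairs can map to a cyclic group in $G$, so the map is not even defined on $\CA_\ell^{\geq 2}$; and if $\ell \mid |Z_F|$ there are $\ell$-elements of $G$ not in the image. You flag this as ``the main obstacle'' at the end, but it is not a detail to be patched---it is precisely the content of the proof.

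The paper avoids this by never passing to the direct-product cover. Instead it argues inside $G$: if $\ell$ divides $|H_i|$ but not $|Z(H_i)|$, a Sylow $\ell$-subgroup of $H_i$ is a direct factor of a Sylow $\ell$-subgroup of $G$, so $Z(S)$ is noncyclic and Lemma~\ref{lem:ele1} finishes. Hence one may assume $\ell \mid |Z(H_i)|$ for every relevant $i$, which (via \cite[Table~24.2]{MT}) forces each such $H_i$ into a short list. The $\ell$-rank bounds of Theorem~\ref{T:norank2} then eliminate most configurations with $s \geq 2$, and the residual cases ($\ell = 3$ with two factors $\SL_3$ or $\SU_3$; $\ell = 2$ with two factors $\SL_2$) are dispatched by computing centralizers of $\ell$-elements directly in the central product, including the extra layer coming from $Z_F$. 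Finally, even once $s=1$ is established, the literal direct-product splitting $G \cong U \times V$ does not follow from ``up to central isogeny''; the paper's Step~2 shows that when $\ell \mid |Z(H_1)|$ the $\ell$-local structure of $G$ is that of an associated group already treated in Theorems~\ref{thm:assoc} and~\ref{thm:char2}, which have $\TF$ of rank one and so cannot occur, leaving only the centerless cases where the splitting is genuine.
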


The proof is divided into three cases. First we deal with $\ell = p$, and
then with $\ell \neq p$, which is again divided into two steps
depending on whether $\ell$ is
odd or even.
Throughout the proof we employ the conventions introduced prior to the
theorem.


Observe first that if $G = U \times V$, and $\ell$ does not divide
$\vert V \vert$, then the restriction map provides an isomorphism  $\TF(G) \xrightcong \TF(U)$.
This is because, in this case, any endotrivial
$kU$-module becomes an endotrivial $kG$-module on
inflation, so the restriction map $T(G) \to
T(U)$ is surjective; and it has finite kernel, again
because the index of $U$ in
$G$ is prime to $\ell$.

\begin{proof}[Proof of Theorem~\ref{thm:tf-reductive} when $\ell = p$]
In this case the groups $Z^F$ and $Z_F$ have order relatively
prime to $\ell$. Hence, $\psi$ induces an isomorphism on Sylow
$\ell$-subgroups. Note that, as we are in the defining characteristic,
$\ell$ divides the order of each $H_i$. However, then $s=1$ in \eqref{ss-seq}, as otherwise a Sylow $\ell$-subgroup
$S$ of $G$ would split as a non-trivial direct product implying $\TF(G)
\cong \Z$ by Lemma~\ref{lem:normal3rk}. This also means that $A =1$,
and $G_{ss} = H_1$.
We have a central extension
$ 1 \to \S \to  \G \to \G/\S \to 1$
producing on fixed-points another central extension
$$ 1 \to \S ^F \to  \G^F \to (\G/\S)^F \to 1$$
where $(\G/\S)^F \cong \K^{F^{n_1}}$ for some simple algebraic group
$\K$ by \cite[Proposition
2.2.11]{GLS}. Now set $H = \K^{F^{n_1}}$ so that $G/Z(G) \cong
H/Z(H)$. Observe that $TF(G) \xrightcong
TF(H)$ by Proposition~\ref{P:groupexactseq}. Hence, 
Theorem~\ref{thm:l=p} says that $H$ is one of the groups listed in 
Tables \ref{sec:defining}.1 and \ref{sec:defining}.2.
\end{proof}


\begin{proof}[Proof of Theorem \ref{thm:tf-reductive}
when $3 \leq \ell \neq p$] Assume that $\TF(G)$ is not cyclic.

\smallskip 
\noindent{\sc Step 1:} We prove first that the prime $\ell$ does not 
divide $\vert H_i \vert$ for more than one $i$. 
Assume that $TF(G)$ is not cyclic and that there is more than 
one $H_i$ whose order is divisible by $\ell.$ Note that $\ell$ 
has to divide $\vert Z(H_i)
\vert$  every time it divides $|H_i|$, since otherwise 
a Sylow $\ell$-subgroup
$S$ of $G$ splits as a non-trivial direct factor 
implying that $Z(S)$ has
$\ell$-rank at least $2$. This means that
we are done by Lemma \ref{lem:normal3rk}. 
The tables of centers of the finite groups of Lie type
(cf.\ \cite[Table 24.2]{MT}) show that if
$\ell$ divides $\vert Z(H_i) \vert$, then $H_i$ has one of the types:
$A_{n-1}(q)$ for $\ell \mid (n,q-1)$, $\ls2A_{n-1}(q)$
for $\ell \mid (n,q+1),$
$E_6(q)$ with $\ell = 3$, or  $\ls2E_6(q)$ with $\ell = 3$. Hence, we
can assume that $H_i$ is one of these types when $\ell$ divides $\vert
Z(H_i) \vert$. The two last cases, involving the groups of type $E$, can
furthermore be eliminated, using Theorem \ref{T:norank2}, as
the $3$-ranks of $E_6(q)$ and $\ls2E_6(q)$ are $6$.

We now deal with the groups of type $A$.
Because $\ell$ divides $n$, the $\ell$-ranks of these groups
is at least $\ell-1$. Therefore, if we have more than one $H_i$ of order
divisible by $\ell$, and none of the groups split off as a direct
factor, the $\ell$-rank of the resulting group will be at least
$(\ell-1)+(\ell-1) - 1 = 2\ell-3$. This number has to be at most
$\ell$ by Theorem \ref{T:norank2}. So we conclude that the only 
possibility is that $\ell = 3$ and $n = 2$, assuming that $\ell$
divides the order of the center of $H_i$.

Note that if there is an $H_i$ whose order is 
not divisible by $3$, then $H_i$ is a Suzuki group (Lie type $\ls2B_2$),
and these groups have trivial centers. So for the purposes of our argument, 
we may assume that there are exactly two components $H_1$ and $H_2$
both having order divisible by $3$. Moreover, because $Z(H_1)$ and
$Z(H_2)$ are not trivial we have that these groups must be the finite
groups arising from the simply connected algebraic groups: 
$H_i = \SL_3(q_i)$ where $3$ divides $q_i-1$,
or $H_i = \SU_3(q_i)$ with $3$ dividing $q_i+1$. Let $3^{t_i}$ be the 
highest power of $3$ dividing $q_i-1$ in the first case and dividing
$q_i+1$ in the second. 

In the exact sequence (\ref{ss-seq}), the image of the group $A^F$ is 
central in $H_1 \times H_2$ and hence it must have order either $1$ or $3$. 
Similarly in sequence (\ref{eq:ses-reductive}), the image of $Z^F$ in 
$H_1H_2 = G_{ss}$ is central and its order is either $1$ or $3$. We claim 
first that if $A^F = \{1\}$, then we are done. The reason is that then 
$G_{ss} \cong H_1 \times H_2$ which has $3$-rank $4$. The map $\psi$
is injective on $G_{ss}$, so that $G$ also has $3$-rank $4$, and we are 
finished by Theorem \ref{T:norank2}(a). Hence, $G_{ss}= H_1H_2$ is the 
central product of $H_1$ and $H_2$ over a central subgroup of order $3$.

Let $S_i$ be a Sylow $3$-subgroup of $H_i$ and $S$ a Sylow $3$-subgroup
of $G$.  Each $S_i$ can be chosen to have a maximal toral subgroup 
$T_i = C_{3^{t_i}} \times C_{3^{t_i}}$ of diagonal matrices with an 
element of order 3 in the form of a permutation matrix acting on it. 
Thus its center has order $3^{t_i}$. 

Suppose that $\vert Z^F \vert = 1$. In the event that both $t_1$ and
$t_2$ are greater than $1$, there are elements $y_1 \in Z(S_1)$
and $y_2 \in Z(S_2)$ having order $9$ such that $y_1^3= z_1$ 
and $y_2^3= z_2$
are the central elements in $H_1$ and $H_2$ that are identified when 
$A^F$ is factored out. Thus the classes of $y_1y_2^{-1}$ and $z_2$ 
modulo $A^F$ are in the center of $S$ and the center of $S$ has $3$-rank 
two. Consequently, we are done in this case and we may assume that $t_1 = 1$.  

Still assuming that $\vert Z^F \vert = 1$, we are down to the situation
that $S_1$ is an extraspecial group of order $27$ and exponent $3$. 
If the class of $(x,y) \in S_1 \times S_2$ modulo $A^F$ has order $3$,
then $(x,y)^3 = (1,y^3) \in A^F$ and $y$ has order $3$. Thus 
the class of $(x,y)$ modulo $A^F$ commutes with 
those of $(x,1)$ and $(1,y)$. In this way we see that the centralizer of
every element of order $3$ in $S$ has $3$-rank at least $3$, and we 
are done with this case. 

We conclude that $\vert Z^F \vert = 3$ and we can assume that $S$
is an extension:
\[
\xymatrix{
1 \ar[r] & S_1S_2 \ar[r] & S \ar[r] & Z_F \ar[r] & 1
}
\]
where $Z_F$ is cyclic of order $3$. From the above arguments, we know that
the centralizers of elements of order $3$ in $S_1S_2$ have $3$-rank $3$.
For the purposes of this proof,  assume that $H_i \cong \SL_3(q_i)$.
Let $x \in S$ be an element of order $3$ that is not in $S_1S_2$. Then 
$x$ must act on $S_1$ as conjugation by 
an element of $\GL_3(q_1)$. So its action element
is conjugate (by an element $\SL_3(q_1)$) to an element of the diagonal
torus. Therefore, its centralizer $K_1$ 
in $H_1 \cong \SL_3(q_1)$ has $3$-rank $2$. The same 
happens for the centralizer $K_2$ of its action on $H_2$. By a similar argument,
the same condition holds when $H_1$ or $H_2$ is isomorphic to $\SU_3(q)$.
It follows that the subgroup of $G$ generated
by $x$, $K_1$ and $K_2$ has $3$-rank at least $4$. Hence, $G$ has 
$3$-rank at least $4$ and we are done by Theorem \ref{T:norank2}(a).
This completes the first step.

\smallskip\noindent
{\sc Step 2:} In this step we complete the proof assuming that 
$\ell$ divides $\vert H_1 \vert$ and does not divide $\vert H_i \vert$
for $i > 1$. Assume that $TF(G)$ has rank greater
than $1$. We wish to show that $G$ has the form $U \times
V$, where $V$ has order prime to $\ell$ and $U$ is one of the groups
listed in Theorem~\ref{thm:lgeq3}.

If $\ell \nmid |Z(H_1)|$, then a Sylow $\ell$-subgroup of $H_1$ is a
direct factor in some Sylow $\ell$-subgroup of $G$. As the $\ell$-part
of the center of a Sylow $\ell$-subgroup of $G$ is 
cyclic if the rank of $TF(G)$ is greater than one, we conclude 
that $|\S^F|$ is prime to $\ell$. Hence, $G$ has
the same $\ell$-local structure as $H_1$. 
Theorem~\ref{thm:lgeq3} now shows that $H_1$ is
isomorphic to one of the groups listed in that theorem. In particular
$Z(H_1)=1$, so $G \cong H_1 \times V$ for some $\ell'$-group $V$, as
asserted. 

Next suppose that $\ell \mid |Z(H_1)|$.
Our aim is to prove that there are no groups with
$TF(G)$ having rank greater than one that can occur, thus finishing 
the proof in the case that $\ell \geq 3$. 
First note that, with our assumptions,  
$G$ has the same $\ell$-local structure as
$(\G/(\H_2\cdots\H_s))^F$, and that the $\ell$-part of $\S^F$
is cyclic as the $\ell$-part of $Z(G)$ is.
The rank argument
from Step 1 shows that $H_1$ must have Lie type $A$. More precisely, we
must have $H_1 \cong \SL_{\ell}(q)$ with $\ell \mid (q-1)$ or $H_1 \cong
\SU_\ell(q)$, with $\ell \mid (q+1)$. The sequence
\eqref{eq:ses-reductive} shows that the $\ell$-local structure
of $G$ must agree with that of a central product $ \langle H_1,
\zeta\rangle \Delta$ where $\zeta$ is an element with determinant of
order $\ell$ inside $\GL_\ell(q)$ or $\GU_\ell(q)$, $\Delta$ is
cyclic of order $\ell^t$, for some $t$, and $ \langle H_1,
\zeta\rangle \cap \Delta$ has order $\ell$. However,
such a group has the same poset of conjugacy classes of
elementary abelian $\ell$-subgroup as $\langle H_1, \zeta\rangle$,
which is an associated group as defined in Section~\ref{sec:assoc}.
Hence,  the torsion free rank of the group of endotrivial modules
cannot be larger than one, as the group does not appear in
Theorem~\ref{thm:assoc}.
\end{proof}

\begin{proof}[Proof of Theorem~\ref{thm:tf-reductive}
when $2 = \ell \neq p$]

Assume first that $s>1$ and that $\TF(G)$ has rank greater
than $1$. We want to show that this case cannot occur.
Observe first that every factor $H_i$, being a nonabelian finite group of Lie
type, has even order, as does $H_i/Z(H_i)$.
In addition, the order of the center of any factor must be even,
as otherwise a Sylow $2$-subgroup of $H_i$ is a direct factor of some
Sylow 2-subgroup of $G$ and hence its center has $2$-rank greater than $1$.
As a result we can assume that
every $H_i$ has type $A_n$, for $n$ odd, $B_n$, $C_n$, $D_n$ or
$E_7$ by the table of orders of centers in \cite[Table 24.2]{MT}.

Recall that by Theorem \ref{T:norank2},
the sectional $2$-rank of $G$ can not be $5$ or
more. The group $G$ contains the direct product
$H_1/Z(H_1) \times \dots \times H_s/Z(H_s)$ as a section. From the
proof of Theorem \ref{thm:char2}, we know that the sectional $2$-rank of a group of type 
$A_1$ or $\ls2A_1$ is two, while the sectional $2$-rank of a group of 
type $A_n$ or $\ls2A_n$ for $n \geq 3$ is at least $3$. In addition, the 
sectional $2$-ranks for groups of types $B_n$, $C_n$, $D_n$ and 
$E_7$ are at least $3$. As a result, the only possible situation with 
sectional $2$-rank less than $5$ occurs when there are exactly two 
components $H_1$ and $H_2$ both of type $A_1$ or $\ls2A_1$. We 
henceforth assume that this is the situation. 

Because $\psi$ is injective on restriction to $\S^F$. It must be
that $Z^F$ is either trivial or has order $2$. In addition,
the image $W$ of the inclusion of
$Z^F$ into $G_{ss} \times \S^F$ followed by the projection onto 
$\S^F$ must be the Sylow $2$-subgroup of $\S^F$. The reason is 
that otherwise, the quotient group $G_{ss}/Z(G_{ss}) \times \S^F/W$,
which is a section of $G$, has sectional $2$-rank $5$ and by 
Theorem \ref{T:norank2}(b), $\TF(G) \cong \Z$. 
If $Z^F$ is trivial, then so is $Z_F$ and a Sylow $2$-subgroup 
$S$ of $G$ is either a direct product or a central product of 
quaternion groups. In the first case, $Z(S)$ has 
$2$-rank $2$ and we are done by Lemma \ref{lem:normal3rk}. A direct
calculation shows that the all maximal elementary abelian $2$-subgroups
of a central product of quaternion groups have $2$-rank $3$.

Hence, we may assume that $Z^F$ has order $2$ and that $S$ is an
extension (cf. the exact sequence~\eqref{eq:ses-reductive})
\[
\xymatrix{
1 \ar[r] & S_1S_2 \ar[r] & S \ar[r] & C_2 \ar[r] & 1
}
\]
where $S_1$, $S_2$ are normal quaternion subgroups and $S_1S_2$ is a 
central product. We have noted already that the centralizer of any 
involution in $S_1S_2$ has $2$-rank $3$. We need only show the same 
for any involution $x$ not in $S_1S_2$. 
The involution $x$ must act on each $S_i$ as an element of $\GL_2(q)$,
which means that it must normalize, but not centralize, some (necessarily
cyclic, since $S_{i}$ are quaternion) subgroup $\langle y_1 \rangle$ of order $4$ in $S_1$ and another 
$\langle y_2 \rangle$ in $S_2$. But then $y_1^2 = y_2^2$ is the
nontrivial central
element in $S_1S_2$, and hence $y_1y_2$ is a noncentral involution in the 
centralizer of $x$. So we have shown $c_{G}(x)$ has $2$-rank at least three. Therefore, we have reduced ourselves to situation where 
$s =1$. 

Now assume that $s = 1$. We follow the pattern of Step $2$ of the proof 
in the case that $p \neq \ell \geq 3$. As shown in that proof, we may assume
that $\ell=2$ divides the order of $Z(H_1)$, 
as otherwise $G \cong H_1 \times V$
where $H_1$ is one of the listed groups. In addition we may assume that 
$H_1$ has sectional $2$-rank at most $4$. The combination of the conditions
that $2 \mid \vert Z(H_1) \vert$ and that the sectional rank be less than $5$,
means that $H_1$ must have one of the types $A_1$, $\ls2A_1$, $A_3$, $\ls2A_3$
or $B_2$ (see Theorem \ref{thm:largerank} and \cite[Table 24.2]{MT}).  
Then as in Step $2$ of the odd characteristic case, the $2$-local structure 
of $H_1$ is that of a central product. Note that in the case that $H_1$ has
type $B_2$ and $H_1 = \Sp_4(q)$, then the element $\zeta$ has order $2$ 
in $\CSp_4(q)$. We note also that if $H_i$ has type $A_3$, and $q \equiv 1$
modulo $4$, then a Sylow $2$-subgroup of $H_1$ has a rank $3$ torus that is 
a characteristic subgroup. It follows that $TF(G) \cong \Z$, as we
have seen before. The same  
happens if $H_1$ has type $\ls2A_3$ and $q \equiv 3 \pmod 4$. Hence, the 
only possibilities are that $H_1$ is one of $\SL_2(q) \cong \SU_2(q)$, 
$\SL_4(q)$ with $q \equiv 3 \pmod 4$, $\SU_4(q)$ with $q \equiv 1 \pmod 4$
or $\Sp_4(q)$. As before we conclude that the group $G$ has the same poset
of conjugacy classes of elementary abelian $2$-subgroups as an associated group
to $H_1$ as defined in Section \ref{sec:assoc}. In the case that $\ell =2$
these groups were treated in Section \ref{sec:le2generic}. In particular, 
Theorem \ref{thm:char2} is sufficient to finish the proof. 
\end{proof}

This finishes the proof of Theorem~\ref{thm:tf-reductive}. We now
verify that this indeed proves the main theorems.

\begin{proof}[Proof of Theorems \ref{thm:tf-reductive-main}
  and \ref{thm:main-grp}]
  First recall that Theorem~\ref{thm:main-grp} is equivalent to
Theorem~\ref{thm:tf-reductive-main} by Theorem~\ref{thm:poset2}, where
in Theorem~\ref{thm:main-grp} we have sorted the list by $\ell$-rank
instead of by prime.
To verify  Theorem \ref{thm:tf-reductive-main}, suppose that $TF(G)$ has rank greater
than one.

If $\ell \neq p$ and $\ell > 2$, then
Theorem~\ref{thm:tf-reductive}(a) says that $G \cong H \times K$
where $\ell \nmid |K|$ and $H$ is listed in
Theorem~\ref{thm:lgeq3}, which is the list in Theorem
\ref{thm:tf-reductive-main}\eqref{case-cross} with $\ell \neq 2$.

If
$\ell \neq p$ and $\ell =2$ then Theorem~\ref{thm:tf-reductive}(b)
tells us that $G \cong H \times K$ with $\ell \nmid |K|$ and $H \cong \PGL_2(q) \cong
\PGU_2(q)$, which is the list in Theorem
\ref{thm:tf-reductive-main}\eqref{case-cross} with $\ell = 2$.

Now suppose that $\ell = p$.  Then the last part of
Theorem~\ref{thm:tf-reductive} says that $G/Z(G) \cong H/Z(H)$, where $H$ is
one of the groups in Theorem~\ref{thm:l=p} with the rank of $TF(H)$
greater than $1$. An inspection of Tables 1 and 2 now shows that $H$
is either ${}^2A_2(p)_{sc}$ with $3 \mid p+1$, $A_2(p)_{sc}$,
$A_2(p)_{ad}$, $B_2(p)_{sc}$ with $p \geq 5$,  $B_2(p)_{ad}$ with $p
\geq 5$, or $G_2(p)$ with $p \geq
7$. This produces the list for $G/Z(G) \cong H/Z(H)$ given in
Theorem~\ref{thm:tf-reductive-main}\eqref{case-char}, by translating
into classical group notation.

The theorems and tables quoted in Theorem~\ref{thm:tf-reductive-main}
give the indicated ranks,
finishing the proof of that theorem. 
\end{proof}


\begin{thebibliography}{*}

 
  
  

\bibitem{A2} J. Alperin, A construction of endo-permutation modules, 
{\em J. Group Theory}, {\sf 4}, (2001), 3--10.



\bibitem{AG09}
K.~K.~S. Andersen and J.~Grodal, 
\newblock The classification of 2-compact groups.
\newblock {\em J. Amer. Math. Soc.}, {\sf 22 (2)}, (2009), 387--436.

\bibitem{AGMV08}
K.~K.~S. Andersen, J.~Grodal, J.~M. M{\o}ller, and A.~Viruel, 
\newblock The classification of {$p$}-compact groups for {$p$} odd.
\newblock {\em Ann. of Math. (2)}, {\sf 167 (1)}, (2008), 95--210.


\bibitem{Az} H. Azad, Semi-simple elements of order 3 in finite
Chevalley groups, {\em J. Algebra}, {\sf 56}, (1979), 481--498.


\bibitem{BGH20} T.~Barthel, J.~Grodal, and J.~Hunt, 
  \newblock Endotrivial modules for finite groups via higher algebra (in preparation).


\bibitem{borel61}
A.~Borel, 
\newblock Sous-groupes commutatifs et torsion des groupes de {L}ie compacts
  connexes.
\newblock {\em T\^ohoku Math. J. (2)}, {\sf 13}, (1961), 216--240..


\bibitem{BFM02} A.~Borel, R.~Friedman, and J.~W.~Morgan,
   Almost commuting elements in compact {L}ie groups,
   {\em Mem. Amer. Math. Soc.}, {\sf 157}, (2002), no. 747.


\bibitem{magma} W. Bosma, J. Cannon, C. Playoust, The Magma
  algebra system. I. The user language, {\em J. Symbolic Computation}, {\sf 24}, 
  (1997), 235--265. 


  
\bibitem{BLO03}
C.~Broto, R.~Levi, and B.~Oliver, The homotopy theory of fusion systems,  {\em J. Amer. Math. Soc.}, {\sf 16 (4)}, (2003), 779--856.

\bibitem{BM07}
C.~Broto and J.~M. M{\o}ller, Chevalley {$p$}-local finite groups, {\em Algebr. Geom. Topol.}, {\sf 7}, (2007), 1809--1919.

  
  
  



              


\bibitem{CRL}  J.~F.~Carlson,  Toward a classification of endotrivial modules,
``Finite simple groups: thirty years of the Atlas and beyond", 139-150,
{\em Contemp. Math.}, {\sf 694}, Amer. Math. Soc., Providence, RI, 2017.

\bibitem{CGMN2} J.F.~Carlson, J.~Grodal, N.~Mazza, D.K.~Nakano, 
Classification of endotrivial modules for finite groups of Lie type
(in preparation).
  
\bibitem{CMN} J.F.~Carlson, N.~Mazza, D.K.~Nakano, Endotrivial modules
  for finite groups of Lie type, {\em J. Reine Angew. Math.},  {\sf
    595},  (2006), 93--120.  


 \bibitem{CMN3} J.F.~Carlson, N.~Mazza and D.K.~Nakano, Endotrivial
   modules  for the general linear group in a nondefining characteristic, {\em
    Math. Zeit.}, {\sf 278}, (2014), 901--925.  


\bibitem{CMN4} J.F.~Carlson, N.~Mazza and D.K.~Nakano, Endotrivial modules
  for finite groups of Lie type $A$ in a nondefining characteristic,
{\em Math. Zeit.},  {\sf 282},  (2016), 1--24.


\bibitem{CT}  J.F.~ Carlson, J. Th\'evenaz, The torsion group of
  endotrivial modules, {\em  Algebra Number Theory}, {\sf 9}, (2015),
  no. 3, 749--765. 

\bibitem{Car} R. W. Carter, {\em Finite Groups of Lie Type: Conjugacy 
Classes and Complex Characters}, John Wiley and Sons, 1985.

\bibitem{CF} R. W. Carter, P. Fong, The Sylow $2$-subgroups of finite
  classical groups, {\em J. Algebra}, {\sf 1}, (1964), 139--151.
  
\bibitem{atlas} J. H. Conway et al., {\em $\Bbb{ATLAS}$ of finite groups},
Oxford University Press, 1985.
 
\bibitem{DRV} A. Diaz, A. Ruiz, A. Viruel, All $p$-local finite groups
  of rank two for odd prime $p$. {\em Trans. Amer. Math. Soc.}, {\sf
    359}, (2007), 1725--1764. 


\bibitem{FM} P. Fong, R. Milgram, On the geometry and cohomology of
  the simple groups $G_2(q)$ and $\ls3D_4(q)$, in Group
  Representations: Cohomology, Group Actions and Topology, {\em
    Proc. Symp. Pure Math.}, Volume 63, 1998, 221--244.
  
\bibitem{FW} P. Fong,  W. Wong, A characterization of the finite
simple groups $\PSL(4,q)$, $G_2(q)$ and $D^2_4(q)$, I, {\em Nagoya Math. J.}, 
{\sf 36}, (1969), 143--184. 

\bibitem{GlMa} G. Glauberman, N.~Mazza, $p$-Groups of maximal elementary 
subgroups of rank $2$, {\em J. Algebra}, {\sf 219}, (2015), 4203--4228. 
  

\bibitem{GH} D. Gorenstein and K. Harada, Finite groups whose
$2$-subgroups are generated by at most 4 elements, {\em Mem. Amer.
Math. Soc.} No. 147, 1974.

\bibitem{GL} D. Gorenstein, R. Lyons, The local structure of finite
  groups of characteristic 2 type, {\em Mem. Amer. Math. Soc.}, {\sf 42}, (1983),
  no. 276. 

\bibitem{GLS} D. Gorenstein, R. Lyons, R. Solomon, {\em The 
Classification of the Finite Simple Groups}, Volume 40, Number 3, AMS,
1998.



\bibitem{grodal10}
J.~Grodal, The classification of {$p$}-compact groups and homotopical group
  theory, {\em Proc. Intl. Congress of Mathematicians (Hyderabad, 2010)}, (2010), 973--1001.

\bibitem{Gro} J. Grodal, {Endotrivial modules for finite groups via
homotopy theory}, {\em preprint}, 2018. arXiv:1608.00499v2 


\bibitem{GL20} J.~Grodal and A.~Lahtinen, 
  \newblock String topology of finite groups of Lie type. {\em
    preprint}, 2020.
  \newblock arXiv:2003.07852.



\bibitem{McW} A.R. MacWilliams, On 2-groups with no normal
abelian subgroups of rank 3, and their occurrence as Sylow 2-subgroups
of finite simple groups, {\em Trans. Amer. Math. Soc.}, {\sf 150}, (1970),
345--408.


  
\bibitem{MT} G. Malle, D. Testerman, 
{\em Linear algebraic groups and finite groups of Lie type},  
Cambridge Studies in Advanced Mathematics 133, Cambridge
University Press, (2011).



\bibitem{Mazza} N. Mazza, {\em Endotrivial Modules}, Springer Briefs in Mathematics, 
  Springer-Verlag, (2019).

\bibitem{quillen71}
D.~Quillen, The spectrum of an equivariant cohomology ring. {I}, {II},
  {\em Ann. of Math.}, {\sf 94 (2)} (1971), 549--572; ibid. {\sf 94 (2)},  (1971), 573--602. 

\bibitem{quillen78}
D.~Quillen, Homotopy properties of the poset of nontrivial {$p$}-subgroups of a group, 
{\em Adv. in Math.}, {\sf 28 (2)}, (1978), 101--128.

\bibitem{ree61} 
R.~Ree, A family of simple groups associated to the Lie algebra $G_2$, 
{\em American J. Math.}, {\sf 63}, (1961), 432-462.

\bibitem{SS} T.A. Springer, R. Steinberg, {\em Conjugacy classes},  
Seminar on Algebraic Groups and Related Finite Groups 
(The Institute for Advanced Study, Princeton, N.J., 1968/69),  pp. 167--266,  
Lecture Notes in Mathematics, Vol. 131,  Springer, Berlin, (1970). 

\bibitem{steinberg68}
R.~Steinberg, Endomorphisms of linear algebraic groups,  {\em Mem.\ Amer.\
Math.\ Soc.}, No. 80. AMS, 1968.

  
\bibitem{steinberg75}
R.~Steinberg, Torsion in reductive groups, {\em Advances in Math.}, {\sf 15}, (1975), 63--92.



\bibitem{Th} J. Th\'evenaz, {\em Endo-permutation modules, a guided tour},
in ``Group Representation Theory''.
EPFL Press, Lausanne, 2007, pp. 115--147.



\bibitem{Win} D. Winter, The automorphism group of an
extraspecial p-group, {\em Rocky Mountain J. Math.}, {\sf 2}, (1972),
no. 2, 159--168.  





\end{thebibliography}
\end{document}